\apptocmd{\sloppy}{\hbadness 10000\relax}{}{}
\DeclareMathAlphabet\mathbfcal{OMS}{cmsy}{b}{n}
\newtheorem{theorem}{Theorem}[section]
\newtheorem{corollary}[theorem]{Corollary}
\newtheorem{lemma}[theorem]{Lemma}
\newtheorem{proposition}[theorem]{Proposition}
\theoremstyle{definition}
\newtheorem{definition}[theorem]{Definition}
\theoremstyle{remark}
\newtheorem{remark}[theorem]{Remark}
\newtheorem{example}[theorem]{Example}
\newcommand{\sPre}{\mathrm{sPSh}}
\newcommand{\SPre}{\mathrm{sPSh}^{\Delta}}
\newcommand{\sSet}{\mathbf{sSet}}
\newcommand{\C}{\mathcal{C}}
\newcommand{\op}{\mathrm{op}}
\newcommand{\Sm}{\mathrm{Sm}}
\newcommand{\SSm}{\mathbf{Sm}}
\newcommand{\upperminus}{{\hspace{-1pt}\mbox{\fontsize{5}{5}\selectfont$($-$)$}}}
\newcommand{\Sing}{\mathrm{Sing}}
\newcommand{\Nis}{\mathrm{Nis}}
\newcommand{\mot}{\mathrm{mot}}
\newcommand{\aff}{\mathrm{aff}}
\newcommand{\M}{\mathbf{M}}
\newcommand{\N}{\mathbf{N}}
\newcommand{\A}{\mathbb{A}}
\newcommand{\ZZ}{\mathbb{Z}}
\newcommand{\GG}{\mathbb{G}}
\newcommand{\id}{\mathrm{id}}
\newcommand{\Hs}{\mathscr{H}}
\newcommand{\HH}{\mathcal{H}}
\newcommand{\Uc}{\mathcal{U}}
\newcommand{\ADelta}{\mbox{{$\Delta$ \hspace{-2.37ex}{\fontsize{7pt}{0pt}$\Delta$}}}}
\newcommand{\myrightleftarrows}[1]{\mathrel{\substack{\xrightarrow{#1} \\[-.5ex] \xleftarrow{#1}}}}
\DeclareMathOperator{\Spec}{Spec}
\DeclareMathOperator{\pr}{pr}
\DeclareMathOperator*{\hocolim}{hocolim}
\DeclareMathOperator{\hocofib}{hocofib}
\DeclareMathOperator{\ttau}{\bar\tau}
\newcommand{\labitem}[2]{
\def\@itemlabel{(\textrm{#1})}
\item
\def\@currentlabel{{\rm #1}}\label{#2} }
\begin{document}

\title{Model topoi and motivic homotopy theory} 

\author{Georgios Raptis}

\address{\newline 
G. Raptis \newline 
Universit\"{a}t Regensburg, 
Fakult\"{a}t f\"{u}r Mathematik, 
93040 Regensburg, Germany}
\email{georgios.raptis@mathematik.uni-regensburg.de}

\author{Florian Strunk}

\address{\newline
F. Strunk \newline 
Universit\"{a}t Regensburg, 
Fakult\"{a}t f\"{u}r Mathematik, 
93040 Regensburg, Germany}
\email{florian.strunk@mathematik.uni-regensburg.de}

\thanks{The authors are supported by the SFB/CRC 1085 \emph{Higher Invariants} (Universit\"at Regensburg) funded by the DFG}

\begin{abstract}
Given a small simplicial category $\C$ whose underlying ordinary category is equipped with a Grothendieck topology $\tau$, we construct a model structure on the category of 
simplicially enriched presheaves on $\C$ where the weak equivalences are the local weak equivalences of the underlying (non-enriched) simplicial presheaves. We show that this 
model category is a $t$-complete model topos and describe the Grothendieck topology $[\tau]$ on the homotopy category of $\C$ that corresponds to this model topos. After we 
first review a proof showing that the motivic homotopy theory is not a model topos, we specialize this construction to the category of smooth schemes of finite type, which is 
simplicially enriched using the standard algebraic cosimplicial object, and compare the result with the motivic homotopy theory. We also collect some partial positive 
results on the exactness properties of the motivic localization functor.
\end{abstract}

\maketitle
\setcounter{tocdepth}{1}
\tableofcontents

\section{Introduction}

The motivic homotopy theory introduced by Morel and Voevodsky \cite{MV99} provides a convenient framework for a homotopy theory of schemes and has led to the introduction of methods from 
algebraic topology with many spectacular applications. The motivic homotopy theory is obtained from two localization processes on the category of simplicial (pre)sheaves on $\Sm_S$, the 
category of smooth schemes of finite type over a base scheme $S$. The \emph{Nisnevich localization} is concerned with imposing descent with respect to the Nisnevich covers and ties the category of simplicial presheaves with 
that of schemes, regarded as a Grothendieck site. The \emph{$\A^1$-localization} imposes $\A^1$-invariance on simplicial presheaves where $\A^1$ is henceforth the scheme that plays the role of 
an interval object. A (fibrant) motivic space is a simplicial presheaf which is $\A^1$-homotopy invariant and satisfies Nisnevich descent. One obtains a motivic space by iterating these 
two localization processes, infinitely often in general, as each one generally destroys the effect of the other. The intricate interaction between the two localization processes is one  
of the subtle points in the theory.

The first localization taken alone corresponds to a construction that is available and well known for general Grothendieck sites. Given a Grothendieck site $(\C,\tau)$, Jardine \cite{JardineSimplicialPresheaves, Ja} 
constructed a model structure on the category $\sPre(\C)$ of simplicial presheaves, called the \emph{local model structure}, whose weak equivalences are those morphisms which 
induce isomorphisms on the $\tau$-sheaves of homotopy groups. The notion of fibrant object in this local model category encodes the property of homotopical descent with respect to hypercovers \cite{DHI}. 
On the other hand, the second localization generalizes to categories where there is a notion of homotopy so that one can speak of homotopy invariant simplicial presheaves. Combining both 
types of structure has led to the notion of a \emph{site with an interval} as a foundational framework for motivic homotopy theory (see~\cite[2.3.1]{MV99} and \cite[2.2]{VoevodskyHomology}). 

In the case of schemes, the $\A^1$-localization can alternatively be encoded by considering the simplicial enrichment $\SSm_S$ of $\Sm_S$ from \cite{HS}. The homotopy theory of enriched 
simplicial presheaves $\SPre(\SSm_S)$ consists of $\A^1$-ho\-mo\-topy invariant objects and moreover, it is equivalent to the $\A^1$-localization of $\sPre(\Sm_S)$ (see Proposition~\ref{A1project}).
In other words, one of the localizations for the motivic homotopy theory can be skipped by encoding $\A^1$-invariance directly into the objects of the category $\SPre(\SSm_S)$.
Motivated by this example, we consider in this paper a mixed setup which combines \emph{descent} with respect to an ordinary Grothendieck topology with a \emph{simplicial enrichment}. More precisely,
the setup consists of a simplicial category $\C$ whose underlying ordinary category $\C_0$ is equipped with a Grothendieck topology $\tau$. We prove that the category of \emph{simplicially enriched} 
simplicial presheaves $\SPre(\C)$ admits a model structure where a 
morphism is a weak equivalence if it is a local weak equivalence when regarded as a morphism between (non-enriched) simplicial presheaves in $\sPre(\C_0)$ (see Theorem~\ref{local-simp}). We call the 
resulting model category, denoted $\SPre(\C)_{\Uc \tau}$, the \emph{$\Uc$-local model category} where $\Uc \colon \SPre(\C) \to \sPre(\C_0)$ is the forgetful functor. This type of homotopy theory is related to 
homotopy theories that arise from a site with an interval, but there are some interesting and important differences, too.
When applied to the simplicial category $\SSm_S$ with the Nisnevich topology $\Nis$, this construction gives a model category $\SPre(\SSm_S)_{\Uc \Nis}$ which is \emph{not} equivalent to the motivic homotopy theory
- the latter is obtained by a further (non-trivial) left Bousfield localization. 

One of the properties that the motivic homotopy theory fails to satisfy is that of being a model topos. The notion of a model topos was introduced and studied by Rezk~\cite{Re1} and 
To\"en--Vezzosi~\cite{TV} and forms the model categorical analogue of an ordinary Grothendieck topos. The definition of a model topos involves homotopical descent properties and the theory of model topoi is intimately connected with homotopical sheaf theory. An argument for the failure of the motivic homotopy theory to form a model topos was sketched in \cite{SO12}, but we will review it 
here too in some more detail (see Proposition~\ref{prop:motivicisnotatopos}). This fact can be considered as a residual effect of the complications that arise when the Nisnevich and 
$\A^1$-localization processes are combined. Each of the two localizations taken separately does indeed define a model topos. The failure of this property for the motivic homotopy theory
implies in particular that the motivic localization functor does not preserve homotopy pullbacks in general. Based on results of Asok--Hoyois--Wendt~\cite{AsokHoyoisWendt} and 
Rezk~\cite{Rezkpistar}, we prove a positive result which says that a homotopy pullback whose lower right corner is $\pi_0^{\aff}$-$\A^1$-local (see Definition~\ref{pi0a1local}) is also a motivic 
homotopy pullback (see Theorem~\ref{theoremonmotivichomotopypullbacks}). 

On the other hand, the $\Uc$-local model category $\SPre(\C)_{\Uc \tau}$ is a model topos (see Theorem \ref{local-topos}). In particular, $\SPre(\SSm_S)_{\Uc \Nis}$ is a model topos. As in 
classical topos theory, there is a close connection between model topoi, defined as homotopy left exact left Bousfield localizations of enriched simplicial presheaves, and Grothendieck 
topologies. This was explored and studied in detail by To\"en--Vezzosi \cite{TV} for simplicial categories and by Lurie \cite{htt} for $\infty$-categories. In these homotopical contexts, a Grothendieck topology on a simplicial category 
(or $\infty$-category) $\C$ corresponds to an ordinary Grothendieck topology $\ttau$ on the homotopy category of $\C$. We emphasize that this differs from our basic setup where the simplicial enrichment 
and the Grothendieck topology are independent of each other. To\"en--Vezzosi \cite{TV} proved the existence of local model structures associated with a simplicial category $\C$ equipped 
with a Grothendieck topology $\ttau$ in this homotopical sense. This \emph{local model category}  $\SPre(\C, \ttau)$ is a model structure on the category of enriched simplicial presheaves $\SPre(\C)$ where the weak equivalences are those morphisms 
which induce isomorphisms on the \mbox{$\ttau$-sheaves} of homotopy groups (see Theorem~\ref{toen-vezzosi}). 
Moreover, To\"en and Vezzosi proved that this construction recovers all (t-complete) model topoi (see Theorem \ref{toen-vezzosi2}). Thus, the (t-complete) model topos $\SPre(\C)_{\Uc \tau}$ also arises in this way from a Grothendieck topology $[\tau]$ on $\mathrm{Ho}(\C)$. We study this induced Grothendieck topology and compare it with $\tau$ (see Subsection \ref{comparinggrothendiecktopologies}). 
Then we specialize this comparison to the case of $\SSm_S$ equipped with the Nisnevich topology and give an interpretation as to what type of descent, necessarily weaker than Nisnevich descent, is encoded in the $\Uc$-local model topos $\SPre(\SSm_S)_{\Uc \Nis}$.
While this particular $\Uc$-local model topos and its connection with the motivic homotopy theory is our main motivation for considering $\Uc$-local model structures in this paper, the general construction 
may be useful for a comparative study also in other contexts where there are two localization processes in interaction, one for descent and one for homotopy invariance. For example, the study of two such localization processes is also
central in the context of differential cohomology (see \cite{BTV}).

\medskip 

The paper is organized as follows. In Section~\ref{section:modeltopoi}, we review the theory of model topoi and discuss some of their properties. 
In Section \ref{local-model-structures}, we prove the existence of the $\Uc$-local model structure on $\SPre(\C)$ and show that it is a model topos (Theorems \ref{local-simp} and \ref{local-topos}). 
In Subsection~\ref{comparinggrothendiecktopologies}, we identify the associated topology $[\tau]$ on the homotopy category of $\C$ that corresponds to this model topos, and discuss the comparison 
between the $\tau$- and $[\tau]$-sheaf conditions.

In Section \ref{motivic-spaces}, we recall from \cite{HS} the simplicial enrichment of the category $\Sm_S$  that is defined by the standard algebraic cosimplicial object.
We show that the $\A^1$-localization of the projective model category $\sPre(\Sm_S)$ is Quillen equivalent to the projective model category $\SPre(\SSm_S)$ on enriched simplicial 
presheaves (Proposition~\ref{A1project}). Thus, it defines a model topos - even though $\A^1$-localization is not homotopy left exact. Then we recall the definition of (several known 
models for) the motivic homotopy theory and prove that it is not a model topos (Subsection~\ref{motivicnotatopos}). In Subsection \ref{partialinteractionofsingandnis}, we collect some 
positive results on the exactness properties of motivic localization. 

The $\Uc$-local model structure $\SPre(\SSm_S)_{\Uc \Nis}$ and its relationship with the motivic homotopy theory are discussed in Section~\ref{sectionfive}.
We construct a useful Quillen equivalent model for this $\Uc$-local model category using non-enriched simplicial presheaves (Theorem~\ref{theorem:singlocalmodel}). 
We also discuss the Grothendieck topology $[\Nis]$ on $\mathrm{Ho}(\SSm_S)$ that is associated with $\SPre(\SSm_S)_{\Uc \Nis}$ and explain the difference 
between Nisnevich descent and $[\Nis]$-descent (Subsection~\ref{comparison-with}). Finally, Subsection \ref{summary} contains a diagram which summarizes the different 
model categories and Quillen adjunctions that arise in the case of $\SSm_S$ equipped with the Nisnevich topology.

\section{Model Topoi}\label{section:modeltopoi}

\subsection{Projective model structures} 

Let $\sSet$ denote the simplicial model category of simplicial sets.
Let $\C$ be a small simplicial (i.e., simplicially enriched) category and let $\SPre(\C)$ denote the functor category of simplicial functors $\C^{\mathrm{op}} \to \sSet$.
A morphism $\eta\colon F \to G$ in $\SPre(\C)$ is an \emph{objectwise weak equivalence} (respectively, \emph{objectwise fibration}) if for every $c \in \mathrm{Ob}(\C)$, the map $\eta_c\colon F(c) \to G(c)$ is a weak equivalence (respectively, fibration) of simplicial sets.
A morphism $\eta\colon F \to G$ is a \emph{projective cofibration} if it has the left lifting property with respect to all morphisms which are objectwise weak equivalences and fibrations. 
The category $\SPre(\C)$ is enriched, tensored and cotensored over $\sSet$ with the (co)tensor structure defined objectwise using the simplicial structure of the category of simplicial sets. 
The following theorem is well known.

\begin{theorem} \label{proj-simp}
The classes of projective cofibrations, objectwise weak equivalences and objectwise fibrations define a proper simplicial 
combinatorial model structure on the category $\SPre(\C)$.
\end{theorem}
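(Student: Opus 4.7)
\textbf{Proof plan for Theorem \ref{proj-simp}.} The plan is to produce the model structure by the standard transfer/recognition theorem for cofibrantly generated model categories (as in Hirschhorn or Kan), exploiting the fact that every piece of structure is defined pointwise from the corresponding structure on $\sSet$. First I would identify, for each $c \in \mathrm{Ob}(\C)$, a left adjoint $F_c\colon \sSet \to \SPre(\C)$ to the evaluation functor $\mathrm{ev}_c$, given on $K \in \sSet$ by the tensor $F_c(K) = \C(-,c) \otimes K$, where $\C(-,c) \in \SPre(\C)$ is the enriched representable. Taking $I$ and $J$ to be the generating cofibrations and generating trivial cofibrations of $\sSet$, I would then propose
\[
I_\C = \{F_c(i) : c \in \mathrm{Ob}(\C),\ i \in I\}, \qquad J_\C = \{F_c(j) : c \in \mathrm{Ob}(\C),\ j \in J\}
\]
as generating (trivial) cofibrations for the projective structure. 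By adjunction, a morphism in $\SPre(\C)$ has the right lifting property with respect to $I_\C$ (respectively $J_\C$) if and only if it is an objectwise trivial fibration (respectively objectwise fibration), which matches the intended classes of fibrations and weak equivalences.

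Next I would verify the hypotheses of the recognition theorem. The domains and codomains of the maps in $I_\C$ and $J_\C$ are small (since the domains and codomains in $I$, $J$ are finite simplicial sets and $F_c$ preserves colimits). The class of objectwise weak equivalences is closed under retracts and satisfies two-out-of-three because the same is true in $\sSet$. The crucial input is that relative $J_\C$-cell complexes are objectwise weak equivalences: pushouts and transfinite compositions in $\SPre(\C)$ are computed objectwise, so this reduces to the corresponding fact about $\sSet$. Similarly one checks that $I_\C$-cofibrations that are objectwise weak equivalences coincide with $J_\C$-cofibrations. Local presentability of $\SPre(\C)$ as a category of simplicial functors from a small simplicial category into a locally presentable simplicially enriched category yields combinatoriality.

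For the simplicial enrichment, since the tensor and cotensor of $\SPre(\C)$ over $\sSet$ are computed objectwise, the pushout-product axiom for $\SPre(\C)$ reduces immediately to SM7 in $\sSet$ applied pointwise. Properness is similarly objectwise: pushouts and pullbacks are computed objectwise, and projective cofibrations are in particular objectwise cofibrations (monomorphisms of simplicial sets), so both left and right properness follow from properness of $\sSet$.

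The main potential obstacle is the verification that relative $J_\C$-cell complexes are objectwise weak equivalences; concretely, one must check that a pushout in $\SPre(\C)$ along $F_c(j)$ with $j \in J$ produces an objectwise weak equivalence. This is where the objectwise nature of colimits does the work: evaluated at any $d \in \C$, the pushout becomes a pushout in $\sSet$ along $\C(d,c) \otimes j$, which is a trivial cofibration in $\sSet$ because $\sSet$ is a simplicial model category and $j$ is a generating trivial cofibration. Transfinite composites of such maps remain weak equivalences by the corresponding closure property in $\sSet$, completing the argument.
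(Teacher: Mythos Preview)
Your proposal is correct and matches the paper's approach: the paper states the theorem as well known and, in the paragraph immediately following, indicates that the model structure is lifted (transferred) from the product model category $\prod_{\mathrm{Ob}(\C)} \sSet$ along the adjunction $i_{!} \dashv i^{*}$ induced by the inclusion $i\colon \mathrm{Ob}(\C) \to \C^{\mathrm{op}}$, with exactly the generating sets you write down. Your adjunctions $F_c \dashv \mathrm{ev}_c$ are precisely the components of $i_! \dashv i^*$, and your detailed verification of the recognition-theorem hypotheses, the pushout-product axiom, and properness simply fills in the standard argument the paper leaves implicit.
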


This model category is called the \emph{projective model category}.
We recall a precise definition of sets of generating cofibrations and trivial cofibrations.
A set of generating cofibrations is defined by the morphisms 
\[
\mathrm{map}_{\C}(-, c) \times \partial \Delta^n \hookrightarrow \mathrm{map}_{\C}(-, c) \times \Delta^n,
\]
for every $c \in \mathrm{Ob}(\C)$ and $n \geq 0$, and a set of generating trivial cofibrations is defined by the morphisms 
\[
\mathrm{map}_{\C}(-, c) \times \Lambda^k_n \hookrightarrow \mathrm{map}_{\C}(-, c) \times \Delta^n,
\]
for every $c \in \mathrm{Ob}(\C)$, $n>0$, and $0 \leq k \leq n$. 
The model category is lifted from the product (cofibrantly generated) model category $\prod_{\mathrm{Ob}(\C)} \sSet$ along the simplicially enriched (Quillen) adjunction 
\[
i_{!}: \sSet^{\mathrm{Ob}(\C)} \rightleftarrows \sSet^{\C^{\mathrm{op}}}: i^*
\]
where $i^{*}$ is the restriction functor along the inclusion $i\colon \mathrm{Ob}(\C) \to \C^{\mathrm{op}}$. 

\medskip

By regarding a set as a constant simplicial set, a small ordinary category $\C$ can be considered as a (discrete) simplicially enriched category where the mapping spaces are
constant simplicial sets. In this case, the category $\SPre(\C)$ is just the category of ordinary simplicial presheaves, denoted $\sPre(\C)$, and the model structure in Theorem~\ref{proj-simp} 
is the standard projective model structure. On the other hand, any simplicial category $\C$ has an underlying ordinary category $\C_0$, obtained by forgetting the simplicial 
enrichment. We emphasize the simplicial enrichment of $\C$ in the notation $\SPre(\C)$ because we are interested in the comparison between the projective model categories 
$\SPre(\C)$ and $\sPre(\C_0)$ and their left Bousfield localizations.
There is a Quillen adjunction
\begin{equation}\label{eq:adjunctionHandU}
\Hs: \sPre(\C_0) \rightleftarrows \SPre(\C): \Uc 
\end{equation}
where $\Uc$ denotes the forgetful functor and $\Hs$ is the colimit-preserving (simplicially enriched) Kan extension of the functor
\[
\Hs_{|\C}: \C_0 \to \SPre(\C), \ c \mapsto \mathrm{map}_{\C}(-, c).
\]
We note that the right adjoint $\Uc$ preserves colimits.

\subsection{Small presentations}

We denote by $\M_S$ the left Bousfield localization of a left proper combinatorial model category $\M$ at a set of morphisms $S$.
We recall that this localized model category always exists in the context of combinatorial model categories (see~\cite[A.3.7]{htt}).
The model category $\M_S$ is again cofibrantly generated and left proper.
It is also simplicial if $\M$ is.
The weak equivalences (respectively, fibrations) in $\M_S$ are called $S$-local equivalences (respectively, $S$-local fibrations). 

\begin{definition}
A \emph{small presentation} $(\C, S)$ consists of a small simplicial category $\C$ and a set of morphisms $S$ in $\SPre(\C)$.
A \emph{small presentation of a model category $\M$} is a triple $(\C, S, F)$ where $(\C, S)$ is a small presentation and $F$ is the left adjoint of a Quillen equivalence 
\[
F: \SPre(\C)_S \rightleftarrows \M : G.
\]
A model category $\M$ is called \emph{presentable} if it has a small presentation. 
\end{definition}

Every presentable model category has a small homotopically dense subcategory of homotopically presentable objects.
Therefore, not every model category can be presentable.
For example, discrete model categories which do not have a small dense subcategory provide examples of non-presentable model categories.
The following theorem of Dugger \cite{Dugger} identifies a large class of presentable model categories (see also \cite{RR2}).

\begin{theorem}[Dugger \cite{Dugger}] \label{dugger}
Every combinatorial model category is presentable.
\end{theorem}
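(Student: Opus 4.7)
The plan is to follow Dugger's strategy for producing a small presentation, adapted to the enriched simplicial setting used in this paper. Without loss of generality I may assume $\M$ is a simplicial combinatorial model category: every combinatorial $\M$ is Quillen equivalent to one that is simplicially enriched (by a standard simplicial replacement argument, e.g.\ via $\mathrm{sSet}$-enriched framings or via Lurie's presentable $\infty$-category machinery), and a small presentation of the simplicial replacement transports along a Quillen equivalence to a small presentation of $\M$. So fix a regular cardinal $\lambda$ such that $\M$ is $\lambda$-combinatorial and simplicially enriched.

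The next step is to choose the simplicial category $\C$. Let $\mathcal{G}$ be a small set of cofibrant, $\lambda$-presentable objects of $\M$ that contains (cofibrant replacements of) the domains and codomains of a set of generating cofibrations and generating trivial cofibrations, and is closed under the tensor $(-) \otimes \Delta^n$ up to a suitable size bound. Take $\C$ to be the full simplicial subcategory of $\M$ spanned by $\mathcal{G}$. By the universal property of $\SPre(\C)$ as the free simplicially cocomplete category generated by $\C$, the inclusion $\C \hookrightarrow \M$ extends uniquely to a colimit-preserving simplicial functor
\[
F \colon \SPre(\C) \longrightarrow \M, \qquad \mathrm{map}_{\C}(-, c) \longmapsto c,
\]
with right adjoint $G(X) = \mathrm{map}_{\M}(-, X)_{|\C}$. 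Since $F$ sends generating (trivial) cofibrations of the projective model structure on $\SPre(\C)$ to (trivial) cofibrations in $\M$ (the latter being simplicial), this $F \dashv G$ is a Quillen adjunction.

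Now I would construct the set $S$ of localizing morphisms. It should consist of two kinds of maps in $\SPre(\C)$: first, for every weak equivalence $c \to c'$ in $\C$, the induced map $\mathrm{map}_{\C}(-,c) \to \mathrm{map}_{\C}(-,c')$; second, for every $\lambda$-small diagram $X_\bullet$ in $\C$ whose homotopy colimit in $\M$ is (equivalent to) some $c \in \C$, a comparison morphism $\hocolim \mathrm{map}_{\C}(-, X_\bullet) \to \mathrm{map}_{\C}(-, c)$. The $\lambda$-combinatoriality of $\M$ bounds these data by a set. By construction, $F$ takes every map in $S$ to a weak equivalence, so the adjunction descends to a Quillen adjunction $F \colon \SPre(\C)_S \rightleftarrows \M : G$ on the left Bousfield localization.

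It remains to verify the Quillen equivalence, and this is the main obstacle. One direction amounts to showing that for a fibrant $X \in \M$, the derived unit $\mathbf{L}F\, G(X) \to X$ is a weak equivalence; by the density of $\C$ among cofibrant $\lambda$-presentable objects, any cofibrant $X \in \M$ is a $\lambda$-filtered homotopy colimit of objects of $\C$, and the relations in $S$ ensure that this presentation is detected inside $\SPre(\C)_S$. The other direction requires that every $S$-local fibrant object $\Phi \in \SPre(\C)_S$ is recoverable from $F(\Phi)$, which follows because an $S$-local object is a simplicial presheaf on $\C$ that sends the chosen homotopy colimits to homotopy limits, hence is the restricted Yoneda embedding of a well-defined object of $\M$ up to the accessibility estimates used in selecting $\mathcal{G}$. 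Together, these comparisons identify $(\C, S, F)$ as a small presentation of $\M$.
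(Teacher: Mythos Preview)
The paper does not give a proof of this theorem at all: it is stated with attribution and the citation \cite{Dugger}, and no argument is provided. So there is no ``paper's own proof'' to compare your proposal against; the result is simply quoted as known.

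As for your sketch itself, it is in the spirit of Dugger's approach but has a couple of soft spots worth noting. First, your opening reduction to the simplicially enriched case is delicate: the fact that every combinatorial model category is Quillen equivalent to a simplicial one is itself a nontrivial theorem of Dugger (closely tied to the very result being proved), so invoking it as a ``standard simplicial replacement argument'' risks circularity unless you are careful to cite that separate result. Second, Dugger's original argument takes $\C$ to be an \emph{ordinary} (discrete) category of cofibrant objects and uses cosimplicial resolutions/framings to build the comparison functor, rather than taking a full simplicial subcategory as you do; the paper's remark immediately following the theorem makes this point explicitly. Your enriched variant can be made to work, but the passage where you argue that an $S$-local fibrant $\Phi$ is ``the restricted Yoneda embedding of a well-defined object of $\M$'' is the heart of the matter and is only asserted, not justified; in Dugger's treatment this is where the real work (the canonical homotopy-colimit presentation and the accessibility estimates) takes place.
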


\begin{remark}
The definition of a small presentation in \cite{Dugger} requires that $\C$ is an ordinary category.
Our definition of a presentable model category is therefore seemingly more general than the definition in \cite{Dugger} - ours allows $\C$ to be a non-discrete simplicial category.
However, as the model category $\SPre(\C)$ is always combinatorial, Dugger's theorem shows that it admits a small presentation defined by an ordinary category.
Hence, the two definitions are equivalent. 
\end{remark}

\begin{remark} 
The property of being presentable is invariant under Quillen equivalences.
If $\M$ is presentable and $F\colon \mathbf{N} \to \M$ is a left Quillen equivalence, then $\mathbf{N}$ admits a small presentation as well (see \cite[Prop.~5.10, Cor.~6.5]{Dugger2}). 
\end{remark}

\subsection{Model topoi}
We review the basic theory of model topoi as introduced by Rezk \cite{Re1} and To\"en--Vezzosi \cite{TV}.
Using the correspondence between presentable model categories and presentable $\infty$-categories, this theory 
is the model categorical counterpart of $\infty$-topos theory as developed by Lurie \cite{htt}. 

A left Quillen functor $F\colon \M \to \N$ is called \emph{homotopy left exact} if it preserves finite homotopy limits.
The proof of the following proposition is straightforward.

\begin{proposition}
Let $\M$ be a left proper combinatorial model category, $T$ a set of morphisms in $\M$, and $S$ a set of $T$-local equivalences.
Consider the left Bousfield localizations
\[
\id_T\colon \M \xrightarrow{\id_S} \M_S \xrightarrow{\id_{T/S}} \M_T.
\]
\begin{itemize}
  \item[(a)] If $\id_S$ and $\id_{T/S}$ are homotopy left exact, then so is $\id_T$.
  \item[(b)] If $\id_T$ is homotopy left exact, then so is $\id_{T/S}$.
\end{itemize}
\end{proposition}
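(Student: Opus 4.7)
The plan is to compute homotopy limits of a finite diagram $D\colon I \to \M$ in all three model categories $\M$, $\M_S$, and $\M_T$ using a single compatible pair of injective fibrant replacements, and then read off both statements from the resulting factorization. Two preliminary facts are used throughout. First, since $S$ consists of $T$-local equivalences, every $T$-local object is in particular $S$-local, so by the mapping-space characterization of local equivalences, every $S$-local equivalence is a $T$-local equivalence. Second, in any left Bousfield localization the fibrations of the localized structure are fibrations of the original, and the same holds for the injective model structures on diagram categories; in particular, an injective $\M_S$-fibrant diagram is also injective $\M$-fibrant.

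For the common setup, given a finite diagram $D$ in $\M$ I would choose an injective $\M_S$-fibrant replacement $D \to D^S$ and extend it to an injective $\M_T$-fibrant replacement $D^S \to D^T$; these exist because $\M$ is left proper combinatorial, so the relevant injective model structures on $I$-diagrams are available. By the second preliminary fact $D^S$ is also injective $\M$-fibrant, so $\lim D^S = \holim_{\M}(D^S)$; and since the composite $D \to D^S \to D^T$ is an injective $\M_T$-fibrant replacement of $D$, we also have $\lim D^S = \holim_{\M_S}(D)$ and $\lim D^T = \holim_{\M_T}(D)$. Consequently the canonical comparison map $\holim_{\M_S}(D) \to \holim_{\M_T}(D)$ is realized by $\lim D^S \to \lim D^T$ induced from $D^S \to D^T$.

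Part (a) is then essentially formal: the hypothesis produces an $S$-local equivalence $\holim_\M(D) \to \holim_{\M_S}(D)$ and a $T$-local equivalence $\holim_{\M_S}(D) \to \holim_{\M_T}(D)$; the first preliminary fact upgrades the former to a $T$-local equivalence, so the composite $\holim_\M(D) \to \holim_{\M_T}(D)$ is a $T$-local equivalence, which shows that $\id_T$ is homotopy left exact. For part (b), I would apply the hypothesis that $\id_T$ is homotopy left exact to the diagram $D^S$ regarded in $\M$: this yields that $\holim_\M(D^S) \to \holim_{\M_T}(D^S)$ is a $T$-local equivalence. Using the identifications $\holim_\M(D^S) = \lim D^S = \holim_{\M_S}(D)$ and $\holim_{\M_T}(D^S) = \lim D^T = \holim_{\M_T}(D)$ established in the setup, this map is precisely the natural comparison $\holim_{\M_S}(D) \to \holim_{\M_T}(D)$, so $\id_{T/S}$ is homotopy left exact. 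The only place that requires care—and really the only potential obstacle—is the bookkeeping to confirm that the maps induced by the chosen replacements agree with the canonical natural comparisons, which follows routinely from functoriality of injective fibrant replacement and a standard two-out-of-three argument.
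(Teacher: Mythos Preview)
Your proof is correct. The paper itself supplies no argument and simply declares the proposition ``straightforward,'' so there is nothing substantive to compare against. Your treatment of part~(a) amounts to the formal observation that a composite of homotopy left exact left Quillen functors is again homotopy left exact, once one knows that $S$-local equivalences are $T$-local equivalences; your handling of part~(b) via a compatible chain of injective fibrant replacements $D \to D^S \to D^T$ is a clean way to make the assertion precise, and the key step --- applying the hypothesis on $\id_T$ to the already $\M_S$-fibrant diagram $D^S$ so that $\holim_\M(D^S)$ and $\holim_{\M_S}(D)$ coincide on the nose --- is exactly the right idea.
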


\begin{definition} \label{def-model topos}
A small presentation $(\C, S)$ is called a \emph{model site} if the left Quillen functor 
\[
\id_S : \SPre(\C) \to \SPre(\C)_S
\]
is homotopy left exact.
A model category $\bf{M}$ is called a \emph{model topos} if it is Quillen equivalent to $\SPre(\C)_S$ for some model site $(\C, S)$. 
\end{definition}

We have the following useful criterion for a small presentation $(\C, S)$ to define a model site.

\begin{proposition} \label{criterion}
The left Quillen functor $\id_S \colon\SPre(\C) \to \SPre(\C)_S$ is homotopy left exact if and only if the class of $S$-local equivalences is closed under homotopy pullbacks in $\SPre(\C)$.
\end{proposition}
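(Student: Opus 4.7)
Since $\id_S$ automatically preserves the terminal object, homotopy left exactness reduces to preserving homotopy pullbacks. For the forward direction, suppose $\id_S$ preserves homotopy pullbacks. Let $f \colon A \to C$ be an $S$-local equivalence appearing as one edge of a homotopy pullback square in $\SPre(\C)$ with parallel map $g \colon P \to B$. Applying $\id_S$ yields a homotopy pullback square in $\SPre(\C)_S$ in which $\id_S(f)$ is a weak equivalence, so by the general fact that weak equivalences in any model category are closed under homotopy pullback, $\id_S(g)$ is a weak equivalence in $\SPre(\C)_S$, i.e.\ $g$ is an $S$-local equivalence.

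For the converse, suppose $S$-local equivalences are closed under homotopy pullback in $\SPre(\C)$. Let $A \to C \leftarrow B$ be a cospan with homotopy pullback $P = A \times_C^h B$ computed in $\SPre(\C)$. Choose a functorial $S$-fibrant replacement, giving a morphism of cospans $(A \to C \leftarrow B) \to (A' \to C' \leftarrow B')$ with each vertical arrow an $S$-local equivalence. Since $\SPre(\C)_S$ is a left Bousfield localization of $\SPre(\C)$, a homotopy pullback in $\SPre(\C)_S$ of a cospan of $S$-fibrant objects agrees with its homotopy pullback in $\SPre(\C)$; thus the homotopy pullback in $\SPre(\C)_S$ is modeled by $P' := A' \times_{C'}^h B'$ computed in $\SPre(\C)$. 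The goal is to show that the canonical comparison $P \to P'$ is an $S$-local equivalence. I factor it as
\[
A \times_C^h B \ \to \ A \times_{C'}^h B \ \to \ A' \times_{C'}^h B \ \to \ A' \times_{C'}^h B',
\]
and check each arrow separately.

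The last two arrows are direct: pasting realizes $A \times_{C'}^h B$ as the homotopy pullback of the $S$-local equivalence $A \to A'$ along the projection $A' \times_{C'}^h B \to A'$, so by hypothesis $A \times_{C'}^h B \to A' \times_{C'}^h B$ is an $S$-local equivalence, and the $B \to B'$ step is analogous. The first arrow, which changes the base of the pullback from $C$ to $C'$, is the main technical obstacle: since $C'$ does not lie over $C$, the hypothesis does not apply directly to the change of base. I handle this through the standard iterated-pullback identity
\[
A \times_C^h B \ \simeq \ \bigl(A \times_{C'}^h B\bigr) \times^h_{C \times_{C'}^h C} C,
\]
where $C \to C \times_{C'}^h C$ is the diagonal. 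The diagonal is itself an $S$-local equivalence: applying the hypothesis to the $S$-local equivalence $C \to C'$ pulled back along itself shows that the projection $C \times_{C'}^h C \to C$ is an $S$-local equivalence, and since the diagonal is a section of this projection and $\id_C$ is a weak equivalence, the $2$-out-of-$3$ property for $S$-local equivalences forces the diagonal to be one too. A final application of the hypothesis to the displayed homotopy pullback then shows that the first arrow is an $S$-local equivalence, completing the argument.
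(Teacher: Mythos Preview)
Your proof is correct. The paper itself does not prove this proposition but simply defers to references (\cite[Prop.~5.6]{Re1} and \cite[Prop.~6.2.1.1]{htt}), so there is no argument to compare against directly. Your forward direction is the standard observation, and your converse is a clean direct argument: you reduce to showing that the comparison map from the homotopy pullback $P$ in $\SPre(\C)$ to the homotopy pullback $P'$ of the $S$-fibrant replacements is an $S$-local equivalence, and you handle the one nontrivial step (changing the base from $C$ to $C'$) via the iterated-pullback identity
\[
A \times_C^h B \simeq (A \times_{C'}^h B) \times^h_{C \times_{C'}^h C} C
\]
together with the observation that the diagonal $C \to C \times_{C'}^h C$ is an $S$-local equivalence (as a section of an $S$-local equivalence). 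This is essentially the argument one finds spelled out in the cited sources, so nothing is missing.
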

\begin{proof}
See \cite[Prop.~5.6]{Re1}, \cite[Prop.~6.2.1.1]{htt}.
\end{proof}

We recall an intrinsic characterization of model topoi in terms of descent properties which is due to Rezk \cite{Re1}.

\begin{definition} \label{descent}
We say that a model category $\M$ satisfies \emph{homotopical descent} if given the following data:
\begin{enumerate}
\item[(a)] a small category $I$,
\item[(b)] $Y \colon I^{\triangleright} \to \bf{M}$ a homotopy colimit diagram, where $I^{\triangleright}$ denotes 
the category $I$ with an added terminal object $\infty \in I^{\triangleright}$, 
\item[(c)] $X\colon I^{\triangleright} \to \bf{M}$ a functor,
\item[(d)] $\phi\colon X \to Y$ a natural transformation such that for every $i \to j$ in $I$ the diagram 
\[
\xymatrix{
X(i) \ar[r] \ar[d] & X(j) \ar[d] \\
Y(i) \ar[r] & Y(j) 
}  
\]
is a homotopy pullback,
\end{enumerate}
then the following hold:
\begin{enumerate}
\labitem{HD1}{HD1} If for every $i \in \mathrm{Ob} I$ the diagram
\[
\xymatrix{
X(i) \ar[r] \ar[d] & X(\infty) \ar[d] \\
Y(i) \ar[r] & Y(\infty) 
}  
\]
is a homotopy pullback, then $X$ is a homotopy colimit diagram. 
\labitem{HD2}{HD2} If $X$ is a homotopy colimit diagram, then the diagram  
\[
\xymatrix{
X(i) \ar[r] \ar[d] & X(\infty) \ar[d] \\
Y(i) \ar[r] & Y(\infty) 
}  
\]
is a homotopy pullback for every $i \in \mathrm{Ob} I$.
\end{enumerate}
\end{definition}

\begin{example}[Mather's second cube theorem (see~{\cite[Thm.~25]{Mather}})]\label{mather}
Suppose that a model category $\M$ satisfies \eqref{HD1}.
Consider a cube in $\M$
\[
\xymatrix@=0.5cm{
& A\ar@{->}[rr]\ar@{-}[d]\ar@{->}[dl] & & B\ar@{->}[dd]\ar@{->}[dl]\\
C\ar@{->}[dd]\ar@{->}[rr]&\ar@{->}[d]&D\ar@{->}[dd]&\\
& A' \ar@{-}[r]\ar@{->}[dl]&\ar@{->}[r]& B'\ar@{->}[dl]\\
C'\ar@{->}[rr]&&D'&\\
}
\]
where the bottom face is a homotopy pushout and all the side faces are homotopy pullbacks.
Then the top face is a homotopy pushout. 
\end{example}

\begin{example}\label{puppe}
Let $\M$ be a model category which satisfies \eqref{HD1}. Let $X\in\M$ be a pointed object in $\M$ and let $F, E\colon I\to \M$ be two diagrams in $\M$ such that there are natural transformations $F \to E \to cX$ with the property that
\[
F(i)\to E(i)\to X 
\]
is a homotopy fiber sequence for all $i\in I$.
Then also
\[
\underset{i\in I}{\hocolim}~F(i)\to \underset{i\in I}{\hocolim}~E(i)\to X 
\]
is a homotopy fiber sequence.
To see this, let us suppose for simplicity that $\M$ is cofibrantly generated and $E$ is a cofibrant-fibrant diagram in the projective model category $\M^I$.
Then consider the solid diagram
\[
\xymatrix{
&F(i) \ar@{-->}[r]\ar@{-->}[ld] & \ldots \ar@{-->}[r]& F(j)\ar@{-->}[ld]\ar@{-->}[r]& A\ar@{->>}[ld]\ar@{->}[d]\\
E(i) \ar@{->}[r]\ar@{->}[drr]&\ldots \ar@{->}[r]& E(j)\ar@{->}[d]\ar@{->}[r]& hE\ar@{->}[d]& {\bar *}\ar@{->>}[dl]\\
	       &                 &       X    \ar@{=}[r]    &   X             &&
}
\]
where $hE$ denotes the (homotopy) colimit, $\bar *$ is obtained by a factorization $*\xrightarrow{\sim} \bar *\twoheadrightarrow X$ and $A$ is the pullback of $hE\to X$ along $\bar *\twoheadrightarrow X$.
We may assume that $X$ and $hE$ are fibrant, so this pullback is the homotopy fiber of $hE\to X$.
Let $F\colon I\to \M$ be the diagram defined by the pullbacks of $E(i)\to hE$ along $A\to hE$.
These pullbacks are also homotopy pullbacks, hence $F(i)$ is a model for the homotopy fiber of $E(i)\to X$.
Then the claim follows as an application of \eqref{HD1}. 
\end{example}

\begin{example}\label{fiberseq}
Let $\M$ be a model category which satisfies \eqref{HD1}.
Let $X$ and $Y$ be pointed objects of $\M$ and consider a homotopy fiber sequence $F\to E\to X$.
Then we have:
\begin{enumerate}
 \item\label{ff1} $\Sigma(X\times Y)\simeq (X*Y)\vee \Sigma X\vee \Sigma Y$.
 \item\label{ff2} There is a homotopy fiber sequence $\Sigma\Omega X\to X\vee X\to X$.
 \item\label{ff3} There is a homotopy fiber sequence $\Omega X*\Omega Y\to X\vee Y\to X\times Y$.
 \item\label{ff4} There is a homotopy fiber sequence $F*\Omega X\to \hocofib(F\to E)\to X$.
 \item\label{ff5} There is a homotopy fiber sequence $\Omega X*\Omega X\to \Sigma\Omega X\to X$.
\end{enumerate}
Here $\Sigma$ denotes $S^1\wedge -$ and all functors are assumed to be derived. These statements are consequences of Mather's second cube theorem (Example~\ref{mather}), as explained 
in \cite{Doeraene}, and Example~\ref{puppe}.
The authors in op.cit.\ consider the cube theorem as an axiom and study its consequences. More precisely, assertion \eqref{ff1} is \cite[Cor.~2.13]{Doeraene} and \eqref{ff2} 
follows directly from Example~\ref{puppe} applied to the diagram $X \leftarrow * \rightarrow X$ over $X$. 
The statement \eqref{ff3} is \cite[Prop.~4.6]{Doeraene} and \eqref{ff4} is \cite[Cor.~4.3]{Doeraene}.
Finally, \eqref{ff5} follows from \eqref{ff4} applied to the fiber sequence $\Omega X\to *\to X$. 
\end{example}

\begin{example} [groupoids are effective] \label{effective groupoids}
Let $\mathbf{Top}$ be the standard model category of topological spaces. It is classically known that $\mathbf{Top}$ satisfies homotopical descent.
As an instance of \eqref{HD2}, let $X_{\bullet}$ be a Reedy cofibrant simplicial space such that for each $u\colon [n] \to [m]$, the 
square
\[
 \xymatrix{
 X_{m+1} \ar[d]_{d_{m+1}} \ar[r]^{\bar{u}^*} & X_{n+1} \ar[d]^{d_{n+1}} \\
 X_m \ar[r]^{u^*} & X_n
 }
\]
is a homotopy pullback.
Here $\bar{u}(i) = u(i)$ for $i \leq n$ and $\bar{u}(n+1) = m+1$.
Then the square 
\[
 \xymatrix{
 X_1 \ar[r]^{d_0} \ar[d]_{d_1} & X_0 \ar[d] \\
 X_0 \ar[r] & |X_{\bullet}|
 }
\]
is also a homotopy pullback.
Similar assertions hold for more general model categories satisfying \eqref{HD2}. 
\end{example}

\begin{example}
Since $\sSet$ satisfies \eqref{HD1} and \eqref{HD2}, so do also the model categories $\SPre(\C)$ for any small 
simplicial category $\C$.
It is easy to see that these properties are invariant under homotopy left exact Bousfield localizations.
Therefore every model topos satisfies homotopical descent. 
\end{example}

\begin{theorem}[Rezk~\cite{Re1}]\label{Giraud}
A presentable model category is a model topos if and only if it satisfies homotopical descent.
\end{theorem}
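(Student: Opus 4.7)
The plan is to prove both implications separately, as is standard for Giraud-type theorems.

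For the easy direction, suppose $\M$ is a model topos, so up to Quillen equivalence $\M = \SPre(\C)_S$ for a model site $(\C,S)$. I would first verify that $\sSet$ itself satisfies \eqref{HD1} and \eqref{HD2}; this is classical and can be reduced to the fact that geometric realization commutes with finite limits and that homotopy pullbacks in $\sSet$ are computed by replacing a morphism by a fibration. Since homotopy limits and colimits in the projective model category $\SPre(\C)$ are computed objectwise by Theorem~\ref{proj-simp}, both \eqref{HD1} and \eqref{HD2} are inherited by $\SPre(\C)$ for any small simplicial $\C$. It then remains to show that these properties pass to any homotopy left exact left Bousfield localization $\SPre(\C) \to \SPre(\C)_S$. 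For this one uses that the identity functor is a left Quillen functor, so it preserves homotopy colimits, and that homotopy left exactness means it also preserves homotopy pullbacks. Applied to the hypothesis squares in the definition of descent, this transfers \eqref{HD1} and \eqref{HD2} from $\SPre(\C)$ to $\SPre(\C)_S$, and hence to any Quillen equivalent $\M$.

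For the hard direction, suppose $\M$ is presentable and satisfies homotopical descent. By Theorem~\ref{dugger} (together with the invariance of presentability under Quillen equivalence) we may fix a small presentation $(\C,S,F)$, so that $\M$ is Quillen equivalent to $\SPre(\C)_S$. Since homotopical descent is invariant under Quillen equivalence (as one checks by transferring homotopy pullback and homotopy colimit diagrams along the derived adjunction), the model category $\SPre(\C)_S$ also satisfies \eqref{HD1} and \eqref{HD2}. By Proposition~\ref{criterion}, it now suffices to show that the class of $S$-local equivalences is stable under homotopy pullback in $\SPre(\C)$.

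To do this, let $f\colon X \to Y$ be an $S$-local equivalence and $g\colon Z \to Y$ arbitrary. I would replace $f$ by a fibration between $S$-fibrant objects; since between $S$-local objects, $S$-local equivalences coincide with projective weak equivalences, this reduces the question to showing that a certain trivial $S$-fibration pulls back to an $S$-local equivalence along a general map. One encodes this in a descent diagram: present $Z$ as a homotopy colimit $\hocolim_{i} Z(i)$ over representables (using Dugger's explicit resolutions in $\SPre(\C)$) and form the induced maps $X(i) := Z(i) \times_Y^h X \to Z(i)$. By construction each face square is a homotopy pullback, and the base diagram is a homotopy colimit diagram. Applying \eqref{HD1} in $\SPre(\C)_S$ identifies the homotopy colimit of $X(i)$ with the $S$-homotopy pullback $Z \times_Y^h X$, and applying \eqref{HD2} shows that the canonical comparison is an $S$-local equivalence. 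Thus base change of $f$ along $g$ is an $S$-local equivalence.

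The main obstacle is the last step: carefully relating homotopy pullbacks computed in $\SPre(\C)$ to those computed in $\SPre(\C)_S$ so that the descent axioms in $\SPre(\C)_S$ can be converted into closure of $S$-local equivalences under the projective homotopy pullbacks. This is where one genuinely needs both \eqref{HD1} and \eqref{HD2}, since \eqref{HD1} controls how the pullbacks glue to a homotopy colimit and \eqref{HD2} then recovers the individual base-change maps as homotopy pullbacks in the local model structure, allowing the comparison with the projective homotopy pullback to be identified as an $S$-local equivalence.
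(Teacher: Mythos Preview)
The paper does not actually supply a proof of this theorem; it simply cites \cite[Thm.~6.9]{Re1}. So there is no in-paper argument to compare against, and your proposal must be judged on its own.

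Your easy direction is fine and in fact coincides with what the paper already sketches in the examples preceding the theorem: descent holds in $\sSet$, is inherited objectwise by $\SPre(\C)$, and is preserved by homotopy left exact Bousfield localization.

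The hard direction, however, has a genuine gap. You fix an \emph{arbitrary} small presentation $(\C,S)$ and attempt to use \eqref{HD1} and \eqref{HD2} in $\SPre(\C)_S$ to prove that $S$-local equivalences are closed under \emph{projective} homotopy pullbacks (Proposition~\ref{criterion}). But the descent axioms in $\SPre(\C)_S$ are formulated in terms of $S$-local homotopy pullbacks, whereas the squares you build, namely
\[
\xymatrix{
X(i) \ar[r] \ar[d] & X(j) \ar[d] \\
Z(i) \ar[r] & Z(j),
}
\]
are homotopy pullbacks in $\SPre(\C)$. Passing from projective homotopy pullbacks to $S$-local homotopy pullbacks is exactly the homotopy left exactness of $\id_S$ that you are trying to establish, so the argument is circular at this point. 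Your final paragraph correctly identifies this as ``the main obstacle'' but does not resolve it; the sketch of how \eqref{HD1} and \eqref{HD2} would combine does not break the circularity.

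Rezk's actual proof does not proceed by showing that an arbitrary presentation is automatically a model site. One instead works with a specific presentation in which the comparison between limits in $\M$ and in the presheaf category is controlled (the derived right adjoint, being fully faithful on local objects, preserves homotopy limits), and then uses descent in $\M$ to verify left exactness for that presentation. If you want to complete your approach, you should either restrict to such a presentation from the start, or supply an independent argument that the squares you form are homotopy pullbacks in $\SPre(\C)_S$ without already assuming closure of $S$-local equivalences under projective base change.
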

\begin{proof}
See \cite[Thm. 6.9]{Re1}.
\end{proof}

\begin{remark}
There is an analogue of this characterization as well as a Giraud-type theorem for $\infty$-topoi in \cite[Thm.~6.1.0.6]{htt}.
In the setting of model categories, Giraud theorems are also obtained by Rezk \cite{Re1} and To\"en--Vezzosi \cite{TV}.
\end{remark}

\begin{example}[disjoint coproducts] \label{disjoint-coproducts}
Let $\M$ be a model topos, $0$ denote the initial object, and $Y, Z$ be cofibrant objects in $\M$.
Then the (homotopy) pushout square 
\[
\xymatrix{
0 \ar[r] \ar[d] & Y \ar[d] \\
Z \ar[r] & Y \bigsqcup Z
}
\]
is also a homotopy pullback.
The proof is analogous to \cite[Prop.~6.1.3.19(iii)]{htt} or can easily be derived directly from Definition \ref{def-model topos}. 
\end{example}

\subsection{Forcing model topoi via localization}

Let $\M$ be a model topos and $S$ a set of morphisms in $\M$. 
While the left Bousfield localization $\M_S$ is not a model topos in general, there is a \emph{closest} model topos associated with $(\M, S)$. 
This is simply given by localizing further at the smallest class generated by the $S$-local equivalences which is closed under homotopy pullbacks in $\M$. 
The set-theoretical problem of the existence of this Bousfield localization can be solved similarly as for the analogous statement about $\infty$-topoi 
\cite[Prop.~6.2.1.2]{htt}. 

\begin{theorem}\label{thmconstructingmodeltopoi}
Let $\M$ be a model topos and $S$ a set of morphisms in $\M$.
Suppose that $\M$ is a left proper combinatorial model category.
Then there is a set of morphisms $\tilde{S}$ in $\M$ such that:
\begin{enumerate}
\item\label{thmconstructingmodeltopoi1} The class of $\widetilde{S}$-local equivalences contains the $S$-local equivalences.
\item\label{thmconstructingmodeltopoi2} The left Quillen functors
\begin{align*}
 \mathrm{id}_{\widetilde{S}} \colon   &{\bf{M}} \to {\bf{M}}_{\widetilde{S}}\\
 \mathrm{id}_{\widetilde{S}/S} \colon & \M_S \to \M_{\widetilde{S}}
\end{align*}
are homotopy left exact.
As a consequence, $\M_{\widetilde{S}}$ is again a model topos.
\item For every other set of morphisms $T$ in $\M$ satisfying \eqref{thmconstructingmodeltopoi1}-\eqref{thmconstructingmodeltopoi2}, 
the functor $\mathrm{id}_{T/\widetilde{S}} \colon \M_{\widetilde{S}} \to \M_{T}$ is a homotopy left exact left Quillen functor. 
\end{enumerate}
\end{theorem}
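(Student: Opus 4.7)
The plan is to produce $\widetilde S$ as a set of morphisms whose class $W_{\widetilde S}$ of local equivalences realizes the smallest class of morphisms in $\M$ that contains the class $W_S$ of $S$-local equivalences and is closed under homotopy pullbacks in $\M$. Granted such a $\widetilde S$, assertion~\eqref{thmconstructingmodeltopoi1} asserts precisely $S\subseteq W_{\widetilde S}$, so that $\id_{\widetilde S}$ factors as $\id_{\widetilde S/S}\circ \id_S$. The homotopy left exactness of $\id_{\widetilde S}$ is then immediate from Proposition~\ref{criterion}, and applying part~(b) of the preceding proposition (with ``$T$'' equal to $\widetilde S$ and ``$S$'' equal to $S$) gives the homotopy left exactness of $\id_{\widetilde S/S}$. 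For assertion~(3), if $T$ satisfies \eqref{thmconstructingmodeltopoi1}--\eqref{thmconstructingmodeltopoi2} then $W_T$ is closed under homotopy pullbacks (by Proposition~\ref{criterion}) and contains $W_S$, so by minimality $W_T\supseteq W_{\widetilde S}$, producing the left Quillen functor $\id_{T/\widetilde S}\colon \M_{\widetilde S}\to \M_T$; applying part~(b) of the preceding proposition again (now with ``$T$'' equal to $T$ and ``$S$'' equal to $\widetilde S$) to the homotopy left exact composition $\id_T = \id_{T/\widetilde S}\circ \id_{\widetilde S}$ yields the homotopy left exactness of $\id_{T/\widetilde S}$.

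The substance of the argument is therefore the construction of $\widetilde S$, which I would carry out by an accessibility argument. Since $\M$ is left proper combinatorial, I would fix a regular cardinal $\kappa$ large enough that $\M$ is $\kappa$-combinatorial, every object of $\M$ is a $\kappa$-filtered homotopy colimit of $\kappa$-presentable cofibrant objects, the domains and codomains of morphisms in $S$ are $\kappa$-presentable and cofibrant, and the class $W_S$ is $\kappa$-accessibly embedded in the arrow category $\M^{[1]}$ (this last point being a standard consequence of the fact that left Bousfield localizations of combinatorial model categories at a set remain combinatorial). I would then define $\widetilde S$ as a set of representatives, up to weak equivalence, of morphisms $g\colon X\to Y$ between $\kappa$-presentable cofibrant objects of $\M$ that occur in a homotopy pullback square in $\M$ opposite some $f\in W_S$. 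Since both $f$ and $g$ here vary over $\kappa$-small data up to equivalence, $\widetilde S$ is a small set, and each $f\in S$ itself arises as such a $g$ (by pulling back along the identity), giving $S\subseteq W_{\widetilde S}$ at once.

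It remains to show that $W_{\widetilde S}$ is closed under homotopy pullback, and is the minimal such class containing $W_S$. For closure, any homotopy pullback square in $\M$ can be written as a $\kappa$-filtered homotopy colimit of homotopy pullback squares with $\kappa$-presentable cofibrant vertices, using that the diagram category of cospans with $\kappa$-presentable values is $\kappa$-accessible. Homotopy pullbacks and $\kappa$-accessibly embedded classes of weak equivalences both commute with $\kappa$-filtered homotopy colimits in a $\kappa$-combinatorial model category, so a homotopy pullback of any $\widetilde S$-local equivalence is exhibited as a $\kappa$-filtered homotopy colimit of morphisms in $\widetilde S$, hence itself belongs to $W_{\widetilde S}$. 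Minimality is automatic from the construction, since every element of $\widetilde S$ is, by definition, a homotopy pullback of an $S$-local equivalence and so lies in any pullback-closed class containing $W_S$.

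The principal obstacle is the accessibility argument in the third paragraph, in particular the simultaneous control of homotopy pullbacks, $\kappa$-accessibly embedded classes of weak equivalences, and $\kappa$-filtered homotopy colimits of diagrams of cospans inside a combinatorial model category. Verifying that a single regular cardinal $\kappa$ can be chosen to dominate all the relevant data is the model-categorical counterpart of the set-theoretic bookkeeping in \cite[Prop.~6.2.1.2]{htt}, and similar accessibility analyses underpin much of the work of Rezk~\cite{Re1} and To\"en--Vezzosi~\cite{TV}.
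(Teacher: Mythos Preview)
Your reduction in the first paragraph is fine, but the construction of $\widetilde S$ has a genuine gap. You define $\widetilde S$ as the set of $\kappa$-small homotopy pullbacks of $S$-local equivalences---a \emph{one-step} closure---and then claim that $W_{\widetilde S}$ is closed under homotopy pullback. Your argument for this is circular. When you pull back an arbitrary $\widetilde S$-local equivalence $f$ and decompose the resulting square as a $\kappa$-filtered homotopy colimit of $\kappa$-small homotopy pullback squares, the morphisms appearing on the right of the small squares are $\kappa$-small $\widetilde S$-local equivalences (by accessibility of $W_{\widetilde S}$), but there is no reason they lie in $\widetilde S$: a $\kappa$-small $\widetilde S$-local equivalence need not itself be a homotopy pullback of an $S$-local equivalence, since $W_{\widetilde S}$ is generated from $\widetilde S$ using $2$-out-of-$3$, homotopy pushouts, and arbitrary homotopy colimits, none of which preserve the property ``is a pullback of something in $W_S$''. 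Consequently the left sides of the small squares are only known to be homotopy pullbacks of $\widetilde S$-local equivalences---exactly what you are trying to show lies in $W_{\widetilde S}$.

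A related symptom is that your argument never uses the hypothesis that $\M$ is a model topos; it invokes only combinatoriality. But the topos hypothesis is essential: the proof in \cite[Prop.~6.2.1.2]{htt} hinges on the fact that in an $\infty$-topos homotopy pullback along any morphism preserves \emph{all} homotopy colimits (universality of colimits, i.e., \eqref{HD1}), not just $\kappa$-filtered ones. This is precisely what allows pullback to interact correctly with the strongly saturated closure operations and forces the full pullback-closed class $\overline S$ (the smallest class satisfying (i)--(v) in the paper's proof) to be determined by its $\kappa$-compact part. The paper's proof proceeds by first identifying $\overline S$ abstractly and then showing it is set-generated; your one-step $\widetilde S$ will in general be strictly smaller than $\overline S\cap\{\kappa\text{-compact morphisms}\}$, and an iteration (or the direct appeal to \cite[Prop.~6.2.1.2]{htt}) is needed.
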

\begin{proof}
It suffices to show that the smallest class of morphisms which satisfies the properties:
\begin{itemize}
\item[(i)]  \label{item:1} it contains $S$ and the weak equivalences in $\M$,
\item[(ii)] \label{item:2} it has the 2-out-of-3 property,
\item[(iii)]\label{item:3} it is closed under homotopy pushouts in $\bf{M}$, 
\item[(iv)] \label{item:4} it is closed in $\bf{M}^{\to}$ under homotopy colimits in $\bf{M}$,
\end{itemize}
and 
\begin{itemize}
\item[(v)] it is closed under homotopy pullbacks in $\M$,
\end{itemize}
is generated by a set of morphisms $\widetilde{S}$ with respect to properties (i)-(iv) only (since these properties specify the classes of weak equivalences of left Bousfield localizations). 
This is proved for $\infty$-topoi in \cite[Prop.~6.2.1.2]{htt}. The proof for model topoi is similar or can easily be obtained indirectly by passing to the associated $\infty$-topos 
and back.
\end{proof}

We emphasize the special dependence of $\widetilde{S}$ on $\M$ that comes from property~(v).
It is easy to conclude that this homotopy left exact Bousfield localization also has the following universal property and therefore may be regarded 
as a kind of ``topofication'' of the pair $(\M, S)$. 

\begin{proposition} \label{topofication-univ-prop}
Let $\M$ be a model topos and $S$ a set of morphisms.
Suppose that $F\colon \M \to \N$ is a left Quillen functor which is homotopy left exact. 
Then $F$ descends to a left Quillen functor on $\M_{\widetilde{S}}$ if and only if it descends to a left Quillen functor on $\M_S$, that is, 
if and only if the left derived functor of $F$ sends $S$ to isomorphisms in $\mathrm{Ho}(\N)$.
In this case, the induced left Quillen functors are again homotopy left exact. 
\end{proposition}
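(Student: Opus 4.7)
The plan is to use the characterization of the $\widetilde{S}$-local equivalences implicit in the proof of Theorem~\ref{thmconstructingmodeltopoi}: they form the smallest class $W_{\widetilde{S}}$ of morphisms in $\M$ which contains $S$ and the weak equivalences of $\M$, has the 2-out-of-3 property, and is closed under homotopy pushouts in $\M$, homotopy colimits in $\M^{\to}$, and homotopy pullbacks in $\M$.

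One direction is straightforward. Since every $S$-local equivalence is an $\widetilde{S}$-local equivalence by Theorem~\ref{thmconstructingmodeltopoi}\eqref{thmconstructingmodeltopoi1}, a descent of $F$ to $\M_{\widetilde{S}}$ composes with $\mathrm{id}_{\widetilde{S}/S}$ to yield a descent to $\M_S$. Moreover, the equivalence between ``$F$ descends to $\M_S$'' and ``$LF$ sends $S$ to isomorphisms in $\mathrm{Ho}(\N)$'' is the universal property of left Bousfield localization. So the content of the proposition is the converse implication: if $LF$ inverts $S$, then $F$ already descends to $\M_{\widetilde{S}}$.

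To show this, I would introduce the class $W_F$ of morphisms $f$ in $\M$ such that $LF(f)$ is an isomorphism in $\mathrm{Ho}(\N)$, and verify that $W_F$ satisfies all five closure conditions defining $W_{\widetilde{S}}$. By hypothesis $W_F$ contains $S$; it contains the weak equivalences of $\M$ because $F$ is left Quillen; the 2-out-of-3 property is automatic; and closure under homotopy pushouts and under homotopy colimits in $\M^{\to}$ follows since $LF$ preserves homotopy colimits. The decisive step, and the main obstacle, is closure under homotopy pullbacks in $\M$: this is exactly where the hypothesis that $F$ is homotopy left exact is used, since then $LF$ preserves finite homotopy limits and a homotopy pullback of isomorphisms in $\mathrm{Ho}(\N)$ is an isomorphism. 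The minimality of $W_{\widetilde{S}}$ then forces $W_{\widetilde{S}} \subseteq W_F$, so $F$ descends to a left Quillen functor $\bar{F}\colon \M_{\widetilde{S}} \to \N$.

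For the final claim, I would verify that $\bar{F}$, and symmetrically the induced functor on $\M_S$, preserves finite homotopy limits. The underlying functor is still $F$. Given a finite diagram $D$ in $\M_{\widetilde{S}}$, take an objectwise $\widetilde{S}$-fibrant replacement $D \to \tilde{D}$; since $\widetilde{S}$-fibrant objects are in particular $\M$-fibrant, the homotopy limit of $\tilde{D}$ in $\M_{\widetilde{S}}$ coincides with its homotopy limit in $\M$. Applying $F$ and invoking its homotopy left exactness expresses this as the homotopy limit in $\N$ of $LF(\tilde{D})$, which equals the homotopy limit of $L\bar{F}(D)$ because $F$ inverts $\widetilde{S}$-local equivalences, as established in the previous paragraph.
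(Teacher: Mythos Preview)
Your proof is correct and follows essentially the same approach as the paper: define the class $W_F$ (denoted $T$ in the paper) of morphisms sent by $LF$ to weak equivalences, verify that it satisfies the five closure properties (i)--(v) from the proof of Theorem~\ref{thmconstructingmodeltopoi}, and conclude by minimality that $W_{\widetilde{S}} \subseteq W_F$. You supply more detail than the paper for the trivial direction and for the final claim that the induced functors remain homotopy left exact, which the paper leaves implicit.
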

\begin{proof}
Suppose that $F$ descends to a left Quillen functor on $\M_S$. Let $T$ be the class of morphisms in $\M$ which map under $F \circ (-)^c$ to weak equivalences in $\N$.
Here $(-)^c$ denotes a cofibrant replacement functor in $\M$. 
Then the class $T$ satisfies the properties (i)-(v) listed in the proof of Theorem~\ref{thmconstructingmodeltopoi}:
(i) holds by assumption, (ii) is obvious, (iii)-(iv) hold because $F$ is a left Quillen functor, and property (v) is satisfied because $F$ is homotopy left exact.
Thus, $\tilde{S} \subseteq T$ and the result follows. 
\end{proof}

\subsection{Slice categories and restricted homotopical descent}

Let $\M$ be a model topos and $X \in \M$.
By \cite[Cor.~6.10]{Re1}, the slice model category $\M/X$ is again a model topos. 
On the other hand, the slice model category $X/ \M$ is not a model topos in general. (For a quick verification of this claim, simply choose a homotopy pushout with upper left corner $X$, which is not a homotopy pullback, and apply Example~\ref{disjoint-coproducts}.) However, this slice model category still satisfies the homotopical descent properties if we restrict to diagrams over contractible categories. 

\begin{proposition}
Let $\M$ be a model topos and $X$ a cofibrant object in $\M$.
Then the model category $X / \M$ satisfies the homotopical descent properties \eqref{HD1} and \eqref{HD2} of Definition~\ref{descent} for each category 
$I$ whose nerve is weakly contractible.
\end{proposition}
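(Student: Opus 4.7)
The plan is to reduce both descent axioms for $X/\M$ to the corresponding axioms in $\M$ via the forgetful functor $U \colon X/\M \to \M$. Two ingredients make this reduction work: first, $U$ preserves and reflects homotopy pullback squares; second, when $N(I)$ is weakly contractible, $U$ preserves and reflects homotopy colimit diagrams of shape $I^{\triangleright}$.

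The first ingredient is immediate from the construction of the slice model structure: the left adjoint $A \mapsto X \sqcup A$ is left Quillen because $X$ is cofibrant, so $U$ is a right Quillen functor and in particular preserves homotopy limits; fibrations and weak equivalences in $X/\M$ are moreover created by $U$, which yields the reflection statement. For the second ingredient, I would use the standard formula expressing the homotopy colimit of a diagram $F \colon I \to X/\M$ as the homotopy pushout of the span
\[
X \;\longleftarrow\; \hocolim_I c_X \;\longrightarrow\; \hocolim_I UF,
\]
where $c_X$ denotes the constant diagram with value $X$ on $I$, the left-hand map is the canonical augmentation, and the right-hand map is induced by the natural transformation $c_X \Rightarrow UF$ that encodes the under-$X$ structure of $F$. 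Under the hypothesis that $N(I)$ is weakly contractible and $X$ is cofibrant, the augmentation $\hocolim_I c_X \to X$ is a weak equivalence, so the homotopy pushout reduces to $\hocolim_I UF$. This identification is natural enough in the augmented data to conclude that a diagram $\mathcal{Y} \colon I^{\triangleright} \to X/\M$ is a homotopy colimit diagram in $X/\M$ if and only if $U\mathcal{Y}$ is one in $\M$.

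Granted these two ingredients, both \eqref{HD1} and \eqref{HD2} for $X/\M$ follow by a formal argument: one applies $U$ to the given data in $X/\M$ to obtain data in $\M$ satisfying the hypotheses of the respective axiom there (which holds in the model topos $\M$ by Theorem~\ref{Giraud}), and then transfers the conclusion back along $U$ using the two preservation/reflection properties. The main technical point is the homotopy colimit calculation in the slice category; this is exactly where the contractibility hypothesis on $N(I)$ enters, since $U$ does not preserve homotopy colimits indexed by general categories.
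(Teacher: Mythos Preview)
Your argument is correct and follows essentially the same route as the paper: both reduce the descent axioms to $\M$ by showing that the forgetful functor $U\colon X/\M \to \M$ preserves and detects homotopy pullbacks (since $U$ is right Quillen and creates weak equivalences) and homotopy colimits over $I$ with $N(I)$ weakly contractible. The only cosmetic difference is that the paper writes the comparison pushout as $X \otimes N(I) \leftarrow \hocolim_I(UF)$ after assuming $\M$ is simplicial, whereas you phrase it via $\hocolim_I c_X$; these are the same object, and the contractibility of $N(I)$ is used in exactly the same way.
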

\begin{proof}
We claim that the forgetful functor $U\colon X / \M \to \M$ preserves and detects all homotopy limits and homotopy colimits over contractible categories.
$U$ is right Quillen and it is easy to see that it preserves and detects (homotopy) limits.
Note that $U$ does not preserve colimits in general (but it preserves connected colimits).
Without loss of generality, we may assume that $\M$ is simplicial.
Then the standard model for the homotopy colimit functor gives the following comparison: for a diagram $F\colon I \to X / \M$, there is a homotopy pushout in $\M$
\[
\xymatrix{X \otimes N(I) \ar[r] \ar[d] & X \ar[d] \\
\mathrm{hocolim}_I (UF) \ar[r] & \mathrm{hocolim}_I F.
}
\]
As a consequence, $U$ preserves and detects homotopy colimits when $N(I)$ is weakly contractible.
Then the required result is a direct consequence of the homotopical descent properties of $\M$.
\end{proof}

The following proposition shows that a stable model category automatically fulfills the restricted descent properties of the previous proposition.

\begin{proposition}\label{stablesatisfiesrestricteddescent}
A stable model category $\M$ satisfies the homotopical descent properties \eqref{HD1} and \eqref{HD2} of Definition~\ref{descent} for each category $I$ 
whose nerve is weakly contractible.
\end{proposition}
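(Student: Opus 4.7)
The plan is to exploit the defining feature of a stable model category---that a square is a homotopy pullback if and only if it is a homotopy pushout---in order to translate the fiberwise hypotheses of Definition~\ref{descent} into statements about homotopy cofibers, which interact well with homotopy colimits. Concretely, form the homotopy cofiber diagram $C = \hocofib(\phi)\colon I^{\triangleright} \to \M$, functorially in $I^{\triangleright}$ (for instance by factoring $\phi$ objectwise as a cofibration followed by a trivial fibration and taking the pushout with the zero object). Hypothesis~(d) combined with stability says that $C(i) \to C(j)$ is a weak equivalence for every $i \to j$ in $I$, so $C|_I$ is essentially constant; since the nerve of $I$ is weakly contractible, the canonical map $\hocolim_{i \in I} C(i) \to C(i_0)$ is then a weak equivalence for any $i_0 \in I$.

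For \eqref{HD1}, the additional hypothesis combined with stability upgrades the essentially constant conclusion to all of $I^{\triangleright}$, so $\hocolim_{i \in I} C(i) \simeq C(\infty)$. Because $\hocolim$ commutes with the formation of homotopy cofibers, one obtains weak equivalences
\[
\hocofib\bigl(\hocolim_I X \to \hocolim_I Y\bigr) \simeq \hocolim_I C \simeq C(\infty) \simeq \hocofib\bigl(X(\infty) \to Y(\infty)\bigr),
\]
and stability converts this identification of cofibers into a homotopy pushout---hence homotopy pullback---structure on the canonical square
\[
\xymatrix{
\hocolim_I X \ar[r] \ar[d] & X(\infty) \ar[d] \\
\hocolim_I Y \ar[r] & Y(\infty).
}
\]
Because $Y$ is a homotopy colimit diagram, the bottom arrow is a weak equivalence, and therefore so is the top arrow; thus $X$ is a homotopy colimit diagram.

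For \eqref{HD2}, both vertical arrows in the square above are weak equivalences by assumption, so taking homotopy cofibers shows that $\hocolim_I C \to C(\infty)$ is a weak equivalence. Combining this with the essentially constant computation of the first paragraph gives $C(i) \simeq C(\infty)$ for every $i \in I$, which by stability is precisely the homotopy pullback property of the square at $(i,\infty)$. The main obstacle is a clean model-categorical justification of the two structural facts invoked repeatedly: the coincidence of homotopy pullbacks and homotopy pushouts in a stable model category, and the commutation of $\hocolim$ with $\hocofib$ applied to a natural transformation of diagrams. The first is the defining consequence of stability; the second is formal, since $\hocofib$ is itself a finite homotopy colimit and homotopy colimits commute among themselves. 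Both can be handled most cleanly by passage to the associated stable $\infty$-category, where they are standard.
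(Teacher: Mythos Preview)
Your argument is correct and essentially identical to the paper's: both form the objectwise homotopy cofiber diagram $C$ (the paper calls it $Z$), use stability to translate the pullback hypotheses into the statement that $C|_I$ is homotopically constant, and then invoke weak contractibility of $N(I)$ to identify $\hocolim_I C$ with each $C(i)$ (the paper cites \cite[Lemma~27.8]{Chacholski-Scherer} for this step). Your write-up contains a minor slip in the \eqref{HD2} paragraph---it is the \emph{horizontal} arrows of your square that are weak equivalences by assumption---but the conclusion you draw (that $\hocolim_I C \to C(\infty)$ is a weak equivalence) is the correct one.
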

\begin{proof}
According to the defining property of stable model categories, a commutative square is a homotopy pushout if and only 
if it is a homotopy pullback. 
Suppose that $Y \colon I^{\triangleright} \to \bf{M}$ is a homotopy colimit diagram, $X\colon I^{\triangleright} \to \bf{M}$ is a functor and $\phi\colon X \to Y$ is a natural transformation such that for every $i \to j$ in $I$ the diagram
\[
\xymatrix{
X(i) \ar[r] \ar[d] & X(j) \ar[d] \\
Y(i) \ar[r] & Y(j) 
}  
\]
is a homotopy pushout. Hence, the diagram $Z\colon I \to \bf{M}$ which consists of the (weakly equivalent) vertical homotopy cofibers $\hocofib(X(i)\to Y(i))$ is homotopically constant. First, we note that $X$ is a homotopy colimit diagram 
if and only if the canonical map $\hocolim Z \to \hocofib(X(\infty)\to Y(\infty))$ is a weak equivalence.
Secondly, the diagram
\[
\xymatrix{
X(i) \ar[r] \ar[d] & X(\infty) \ar[d] \\
Y(i) \ar[r] & Y(\infty) 
}  
\]
is a homotopy pushout if and only if $Z(i) \to \hocofib(X(\infty)\to Y(\infty))$ is a weak equivalence.
Hence, it remains to show that $Z(i) \rightarrow \hocolim Z$ is a weak equivalence for all $i \in I$. This follows from 
\cite[Lemma~27.8]{Chacholski-Scherer} given that the nerve of $I$ is weakly contractible.
\end{proof}

\subsection{Right properness and \eqref{HD1}} 

The defining property of a model topos is partially related to the existence of a \emph{right proper} small presentation $\SPre(\C)_S$. 
Right properness is equivalent to the property that for every weak equivalence $f\colon X\to Y$, the Quillen adjunction 
\[
f_! :{\bf{M}}/X \rightleftarrows {\bf{M}}/Y : f^ *,
\]
which is defined by composition with $f$ and pullback respectively, is a Quillen equivalence.
In particular, right properness depends only on the underlying category with weak equivalences.
We emphasize that right properness is \emph{not} invariant under Quillen equivalences (for example, the Bergner model structure on simplicially enriched categories is right proper, whereas the Quillen equivalent Joyal model structure on simplicial sets is not right proper).

\begin{proposition}\label{base-change}
Every model topos admits a right proper small presentation. 
\end{proposition}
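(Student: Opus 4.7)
The plan is to show that the tautological presentation produced by the definition of a model topos is already right proper, so that no modification is needed. Concretely, suppose $\M$ is a model topos, fix a model site $(\C, S)$ together with a Quillen equivalence $\SPre(\C)_S \simeq_Q \M$, and aim to verify that $\SPre(\C)_S$ itself is right proper. The strategy is two-step: first establish right properness of the unlocalized projective model category, and then transport this to the localization using homotopy left exactness.

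For the first step I would note that the projective model category $\SPre(\C)$ is right proper because its weak equivalences and fibrations are defined objectwise, pullbacks in $\SPre(\C)$ are computed objectwise, and $\sSet$ is right proper. For the second step, take any $S$-local equivalence $f \colon X \to Y$ and any $S$-local fibration $g \colon Z \to Y$, and form the strict pullback
\[
\xymatrix{
P \ar[r]^{f'} \ar[d] & Z \ar[d]^{g} \\
X \ar[r]_{f} & Y.
}
\]
Since $S$-local fibrations are in particular projective fibrations (a general feature of left Bousfield localization), right properness of $\SPre(\C)$ implies that this square is a homotopy pullback in $\SPre(\C)$. Because $(\C,S)$ is a model site, $\id_S$ is homotopy left exact, so by Proposition \ref{criterion} the class of $S$-local equivalences is closed under homotopy pullback in $\SPre(\C)$; applying this to $f$ shows that $f'$ is an $S$-local equivalence. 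This is exactly the right properness of $\SPre(\C)_S$, and the Quillen equivalence with $\M$ then exhibits $(\C,S)$ as the desired right proper small presentation.

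The only delicate point is the passage between a strict pullback and a homotopy pullback, which is why right properness of the unlocalized $\SPre(\C)$ is essential; without it, Proposition \ref{criterion} would guarantee only the homotopical, not the strict, stability of $S$-local equivalences under base change along $S$-local fibrations. Everything else is a bookkeeping exercise with the definitions, so I expect no serious obstacle beyond being careful about the distinction between the weak equivalences and fibrations before and after localization.
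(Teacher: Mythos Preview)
Your proposal is correct and follows essentially the same argument as the paper: both use that $S$-local fibrations are projective fibrations and that $\SPre(\C)$ is right proper to identify the strict pullback as a homotopy pullback in $\SPre(\C)$, and then invoke homotopy left exactness (the paper phrases this as the square remaining a homotopy pullback in $\SPre(\C)_S$, while you cite Proposition~\ref{criterion}, which is equivalent). No changes are needed.
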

\begin{proof}
Let $\bf{M}$ be a model topos and $(\C, S, F)$ a small presentation of $\M$ where $(\C, S)$ is a model site. 
Consider a pullback square in $\SPre(\C)_S$ 
\[
\xymatrix{
X \ar[r]^{g'} \ar[d] & E \ar[d]^{p} \\
A \ar[r]^g & B
}
\]
where $p$ is an $S$-local fibration and $g$ an $S$-local equivalence.
Then $p$ is also a fibration in $\SPre(\C)$.
Since $\SPre(\C)$ is right proper, it follows that the square is also a homotopy pullback in $\SPre(\C)$.
Then it is also a homotopy pullback in $\SPre(\C)_S$ and therefore $g'$ is an $S$-local equivalence, as required.
\end{proof} 

The following partial converse shows that \eqref{HD1} is also a consequence of right properness.
We note that \eqref{HD1} asserts that homotopy colimits commute with homotopy pullbacks and thus can be regarded as a homotopy theoretic analogue of the property that colimits are universal.
We note that \eqref{HD2} does not follow from the existence of a right proper small presentation in general (see, e.g.,~Proposition~\ref{prop:motivicisnotatopos} for an example). 

\begin{theorem}\label{pdescent}
A presentable model category $\bf{M}$ satisfies \eqref{HD1} if and only if it admits a right proper small presentation.
\end{theorem}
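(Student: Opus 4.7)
The plan is to prove both directions, invoking the invariance of \eqref{HD1} under Quillen equivalences.

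For $(\Leftarrow)$, assume a right proper small presentation $F\colon \SPre(\C)_S \rightleftarrows \M$ and reduce to $\M = \SPre(\C)_S$. Take the data $\phi\colon X \to Y$ of \eqref{HD1}. After factoring, I may assume $\phi_\infty$ is an $S$-local fibration; since the trivial cofibrations of $\SPre(\C)$ lie among those of $\SPre(\C)_S$, the map $\phi_\infty$ is then also a projective fibration in $\SPre(\C)$. Form the strict pullbacks $\widetilde X(i) := X(\infty) \times_{Y(\infty)} Y(i)$: these are homotopy pullbacks in $\SPre(\C)_S$ (by the right properness hypothesis) as well as in $\SPre(\C)$ (by right properness of $\SPre(\C)$), so each canonical map $X(i) \to \widetilde X(i)$ is an $S$-local equivalence.

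The key step is to replace $Y$ by a genuine homotopy colimit diagram in $\SPre(\C)$. Set $Y'(\infty) := \hocolim^{\SPre(\C)}_I Y|_I$ with $Y'|_I = Y|_I$, together with the natural map $Y'(\infty) \to Y(\infty)$, which is an $S$-local equivalence since $Y$ is a homotopy colimit diagram in $\SPre(\C)_S$. Extend $\widetilde X$ by $\widetilde X(\infty) := X(\infty) \times_{Y(\infty)} Y'(\infty)$; the induced map $\widetilde X(\infty) \to X(\infty)$ is an $S$-local equivalence by right properness of $\SPre(\C)_S$. All squares of $\widetilde X \to Y'$ are strict pullbacks along the projective fibration $X(\infty) \to Y(\infty)$, hence homotopy pullbacks in $\SPre(\C)$. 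As $\SPre(\C)$ satisfies full homotopical descent and $Y'$ is a homotopy colimit diagram in $\SPre(\C)$, applying \eqref{HD1} in $\SPre(\C)$ forces $\widetilde X$ to be a homotopy colimit diagram in $\SPre(\C)$. Since homotopy colimits in $\SPre(\C)$ and $\SPre(\C)_S$ coincide and all comparison maps are $S$-local equivalences, $X$ is a homotopy colimit diagram in $\SPre(\C)_S$.

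For $(\Rightarrow)$, start from any small presentation $\SPre(\C)_T \simeq \M$ (Theorem~\ref{dugger}). Applying \eqref{HD1} with $I = \Delta^1$ to the diagram $A \xrightarrow{\sim} B \xrightarrow{\id} B$ (where $\sim$ is a $T$-local equivalence) and its pullback along an arbitrary morphism $g\colon C \to B$ (extended by $\id_C$) shows that $T$-local equivalences are closed under homotopy pullback in $\SPre(\C)_T$. The task then reduces to enlarging $T$ to a set $\widetilde T$ so that $\SPre(\C)_{\widetilde T}$ is right proper while preserving the class of weak equivalences, which then yields a right proper presentation Quillen equivalent to $\M$. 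The main obstacle is the explicit set-theoretic construction of $\widetilde T$ witnessing right properness; this can be carried out by a small object argument leveraging the pullback-stability of $T$-local equivalences, in the spirit of the construction of $\widetilde S$ in Theorem~\ref{thmconstructingmodeltopoi}.
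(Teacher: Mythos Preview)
Your $(\Leftarrow)$ argument is correct and follows exactly the approach the paper alludes to via the reference to Rezk~\cite{Re2}: reduce to a right proper $\SPre(\C)_S$, rectify $\phi_\infty$ to a fibration, replace the $I^{\triangleright}$-diagram by strict pullbacks along it, and then appeal to \eqref{HD1} in the ambient projective model category $\SPre(\C)$. The bookkeeping with $Y'$ and $\widetilde X$ is handled carefully.

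Your $(\Rightarrow)$ argument, however, has a genuine gap. The application of \eqref{HD1} in $\SPre(\C)_T$ to the diagram $A \xrightarrow{\sim} B \xrightarrow{\id} B$ only yields that the \emph{homotopy} pullback in $\SPre(\C)_T$ of a $T$-local equivalence is again a $T$-local equivalence --- but this is trivially true in any model category and does not use \eqref{HD1} at all. What would actually be needed (e.g.\ to invoke Proposition~\ref{criterion}) is closure under homotopy pullbacks in $\SPre(\C)$, and your argument does not give this: the diagram $A \to B \to B$ is a homotopy colimit diagram in $\SPre(\C)_T$, not in $\SPre(\C)$, so \eqref{HD1} in $\SPre(\C)$ is unavailable. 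More fundamentally, the proposed remedy of ``enlarging $T$ to $\widetilde T$ while preserving the class of weak equivalences'' cannot work: as the paper notes before Proposition~\ref{base-change}, right properness depends only on the underlying category with weak equivalences, so if the $\widetilde T$-local equivalences coincide with the $T$-local equivalences then $\SPre(\C)_{\widetilde T}$ and $\SPre(\C)_T$ are the \emph{same} model category, and nothing has been gained. The analogy with Theorem~\ref{thmconstructingmodeltopoi} is misleading, since there one explicitly enlarges the class of weak equivalences.

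The paper does not supply its own argument for $(\Rightarrow)$ but instead cites \cite[Prop.~7.8 and Thm.~7.10]{GK}. The construction there is substantially different: rather than tweaking the localizing set in a fixed $\SPre(\C)$, one produces a new presentation (in general with a different underlying category or cofibration class) in which the fibrations are controlled well enough to guarantee right properness. The key input from \eqref{HD1} is that homotopy base change along any map is a left adjoint at the level of homotopy categories (universality of colimits), and this is what allows one to build a locally cartesian closed, hence right proper, model. Your sketch does not engage with this step.
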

\begin{proof}
A direct proof of the ``if''-part can be given along the lines of \cite{Re2}. A complete proof can be found in 
\cite[Prop.~7.8 and Thm.~7.10]{GK}.
\end{proof}

\section{Local model structures} \label{local-model-structures}

\subsection{The $\Uc$-local model structure}
Let $\C$ be a small simplicial category whose underlying ordinary category $\C_0$ is endowed with a Grothendieck topology $\tau$.
For technical convenience, we shall assume that the associated topos of sheaves on $\C_0$ has enough points.
Let $\mathrm{Sh}(\C_0)$ denote the Grothendieck topos of sheaves on (the ordinary site) $\C_0$ and fix a small collection of enough points 
$x_i^* \colon \mathrm{Sh}(\C_0) \to \mathbf{Set}$.
We consider the composite functors 
\[
\hat{x}_i^* \colon \mathrm{PSh}(\C_0) \xrightarrow{\alpha} \mathrm{Sh}(\C_0) \xrightarrow{x_i^*} \mathbf{Set} 
\]
where $\alpha$ denotes the sheafification functor for the $\tau$-topology.
Each functor $\hat{x}_i^*$ induces a functor $\sPre(\C_0) \to \sSet$ which we denote by the same symbol.

We recall that $\Uc\colon \SPre(\C)\to \sPre(\C_0)$ denotes the forgetful functor.
A morphism $\eta\colon F \to G$ in $\SPre(\C)$ is called a \emph{local weak equivalence} if it induces weak equivalences of simplicial sets
\[
(\hat{x}_i^*\Uc)(\eta)\colon (\hat{x}_i^* \Uc)(F) \to (\hat{x}_i^*  \Uc)(G)
\]
for every point $\hat{x}_i^*$.
This class of weak equivalences does not depend on the choice of points $x_i^*$ and it can be equivalently defined in terms of sheaves of homotopy groups (see \cite{Ja}).
An objectwise weak equivalence is also a local weak equivalence \cite[Lemma~9]{Ja}.

A morphism $\eta\colon F \to G$ is a \emph{global fibration} if it has the right lifting property with respect to all morphisms which are projective cofibrations and local weak equivalences.
If $\eta \colon F \to G$ is a global fibration, then it is also an objectwise fibration and $(\hat{x}_i^*\Uc)(\eta)$ is a fibration of simplicial sets for each $\hat{x}_i^*$.
This follows from the fact that $\hat{x}_i^* \Uc$ preserve finite limits and epimorphisms.
If $\C$ is an ordinary site, the corresponding notion of a globally fibrant object essentially encodes the property of being a homotopy sheaf (with respect to $\tau$-hypercovers).
We refer to \cite{DHI} and \cite{Ja} for background on homotopical sheaf theory in the case where $\C$ is an ordinary (non-simplicial) category. 

\begin{theorem} \label{local-simp}
Let $\C$ be a small simplicial category whose underlying ordinary category $\C_0$ is endowed with a Grothendieck topology $\tau$.
Then the classes of projective cofibrations, local weak equivalences and global fibrations define a proper simplicial combinatorial model structure on the category $\SPre(\C)$. 
\end{theorem}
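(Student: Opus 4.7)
The plan is to apply Smith's existence theorem for left Bousfield localizations of combinatorial model categories to the class $W$ of local weak equivalences in $\SPre(\C)$, viewed as an enlargement of the class of projective weak equivalences from Theorem~\ref{proj-simp}. This reduces to verifying that $W$ contains the projective weak equivalences, satisfies 2-out-of-3, is an accessible subcategory of the arrow category $\SPre(\C)^{[1]}$, and that its intersection with the projective cofibrations is closed under pushouts and transfinite compositions.

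The first two properties are immediate. Accessibility of $W$ follows because $W = \Uc^{-1}(W_\tau)$, where $W_\tau$ is the accessible class of Jardine local weak equivalences in $\sPre(\C_0)$, and $\Uc$ is an accessible functor (it preserves all colimits, as noted in the excerpt). The crucial closure property exploits that $\Uc$ is also a right adjoint and therefore preserves monomorphisms: the generating projective cofibrations $\mathrm{map}_\C(-,c) \times \partial\Delta^n \hookrightarrow \mathrm{map}_\C(-,c) \times \Delta^n$ are sent to monomorphisms in $\sPre(\C_0)$, and since $\Uc$ preserves colimits and monomorphisms are closed under pushouts and transfinite composition, every projective cofibration in $\SPre(\C)$ is mapped to a monomorphism. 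Jardine local weak equivalences in $\sPre(\C_0)$ are closed under pushout along monomorphisms (by left properness of the \emph{injective} Jardine model structure, whose cofibrations are precisely the monomorphisms) and under transfinite composition, so $W \cap \mathrm{Cof}_{\mathrm{proj}}$ inherits the required closure.

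Once the model structure is established, left properness follows immediately from the closure property just verified, and simpliciality and combinatoriality are inherited from the projective structure via the general theory of left Bousfield localization. For right properness, I would argue stalkwise: $\Uc$ preserves all limits and sends global fibrations to objectwise fibrations, while the stalk functors $\hat{x}_i^*$ preserve finite limits, detect Jardine local weak equivalences, and send objectwise fibrations in $\sPre(\C_0)$ to Kan fibrations (being filtered colimits of objectwise fibrations of simplicial sets). Thus the base change of a local weak equivalence along a global fibration in $\SPre(\C)$ becomes, after applying each $\hat{x}_i^* \circ \Uc$, a pullback in $\sSet$ of a weak equivalence along a Kan fibration, and right properness reduces to that of $\sSet$. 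The principal obstacle is the verification of the closure of $W \cap \mathrm{Cof}_{\mathrm{proj}}$ under pushouts and transfinite compositions; this is where the compatibility between the simplicial enrichment of $\C$ and the ordinary Grothendieck topology on $\C_0$---mediated by the forgetful functor $\Uc$ preserving both limits and colimits---does the real work.
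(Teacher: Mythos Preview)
Your argument follows the same strategy as the paper: apply Smith's recognition theorem, establish accessibility of $W$ as a preimage of an accessible class under an accessible functor, and verify closure of $W \cap \mathrm{Cof}_{\mathrm{proj}}$ using that $\Uc$ preserves colimits and takes projective cofibrations to monomorphisms. The paper carries this out with the stalk functors $\hat{x}_i^*\Uc$ directly (reducing everything to $\sSet$), while you factor through $\sPre(\C_0)$ and invoke the known Jardine injective model structure there; this is only a difference in packaging.

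One small correction is needed. The stability of $W \cap \mathrm{Cof}_{\mathrm{proj}}$ under pushout is \emph{not} a consequence of left properness of the injective Jardine model structure: left properness concerns cobase change of an arbitrary weak equivalence along a monomorphism, whereas here the map $\Uc(g)$ you are pushing out along is arbitrary. What you actually need (and have) is that $\Uc$ sends a map in $W \cap \mathrm{Cof}_{\mathrm{proj}}$ to a monomorphism which is also a local weak equivalence, i.e., to a \emph{trivial cofibration} in the injective Jardine structure, and trivial cofibrations are stable under arbitrary cobase change in any model category. With this adjustment your closure argument is complete, and the remaining verifications (simpliciality, left and right properness) match the paper's.
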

\begin{proof}
We show that the conditions of Smith's recognition theorem for model structures on locally presentable categories are satisfied (see \cite[Prop.~A.2.6.10]{htt}, \cite[Thm.~4.1]{Ra}). 

The class of local weak equivalences is the intersection of the preimages of the class of weak equivalences between simplicial sets along the small collection of accessible functors $\hat{x}_i^* \Uc$ for each point $\hat{x}_i^*$.
The class of weak equivalences between simplicial sets, regaded as a full subcategory of $\sSet^{\to}$, is accessible and acccessibly embedded \cite[Cor.~A.2.6.8]{htt}, \cite{Ra2}.
It follows that the class of local weak equivalences is accessible and accessibly embedded in $\SPre(\C)^{\to}$, regarded as a full subcategory.
It also has the 2-out-of-3 property. 

A morphism which has the right lifting property with respect to the projective cofibrations is an objectwise weak equivalence and therefore also a local weak equivalence.
Lastly, the class of local weak equivalences which are monomorphisms is cofibrantly closed (that is, it is closed under pushouts, transfinite compositions and retracts), since the functors of points $\hat{x}_i^*\Uc$ preserve colimits, monomorphisms and weak equivalences, and the corresponding property is valid in $\sSet$.
Hence the intersection of projective cofibrations and local weak equivalences is also cofibrantly closed.
This completes the proof of the existence of the model structure.

The compatibility with the simplicial structure and left properness follow easily from Theorem~\ref{proj-simp}.
Right properness follows from the right properness of $\sSet$ given that the functors $\hat{x}_i^*\Uc$ preserve pullbacks and send global fibrations to fibrations of simplicial sets. 
\end{proof}

This model category will be denoted by $\SPre(\C)_{\mathrm{\mathcal{U}\tau}}$.
We will refer to it as the \emph{$\mathcal{U}$-local model structure} on $\SPre(\C)$ in order to emphasize that the simplicial structure and the Grothendieck topology are given independently of each other.
We note that it is a left Bousfield localization of the projective model category $\SPre(\C)$ at the class of local weak equivalences.

In the case of Theorem~\ref{local-simp} where $\C$ is an ordinary category, we will usually denote the model category $\SPre(\C)_{\mathrm{\mathcal{U}\tau}}$ by $\sPre(\C, \tau)$ and refer to it as the \emph{local model structure} (see \cite{Blander, Ja}). 

\begin{remark} \label{choices-of-cof}
As the proof of Theorem~\ref{local-simp} suggests, it is also possible to choose larger classes of cofibrations.
Any set of monorphisms which contains the generating projective cofibrations generates a class of cofibrations for a model structure 
on $\SPre(\C)$ where the weak equivalences are the local weak equivalences. 
\end{remark}

We show next that the $\Uc$-local model structures are model topoi.
This is well known in the case of ordinary Grothendieck sites (see~\cite{Re2}). 

\begin{theorem} \label{local-topos}
Let $\C$ be a small simplicial category whose underlying ordinary category $\C_0$ is endowed with a Grothendieck topology $\tau$.
Then the $\Uc$-local model category $\SPre(\C)_{\mathrm{\mathcal{U}\tau}}$ is a model topos.
\end{theorem}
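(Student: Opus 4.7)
The plan is to invoke Proposition \ref{criterion}. By Theorem \ref{local-simp}, the $\Uc$-local model structure is combinatorial and shares its cofibrations with the projective model structure $\SPre(\C)$; standard results on combinatorial model categories then give that $\SPre(\C)_{\Uc\tau}$ is the left Bousfield localization of $\SPre(\C)$ at some small set of morphisms $S$. It therefore suffices to prove that the class of local weak equivalences is closed under homotopy pullbacks in $\SPre(\C)$: once this is shown, $(\C,S)$ is a model site and the conclusion follows from Definition \ref{def-model topos}.

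The key step is to verify that each detecting functor $\hat{x}_i^*\Uc\colon \SPre(\C)\to \sSet$ sends homotopy pullback squares in the projective model structure to homotopy pullback squares in $\sSet$. On the one hand, the forgetful functor $\Uc$ preserves all limits (they are computed objectwise) and carries projective fibrations of enriched presheaves to objectwise Kan fibrations of (non-enriched) simplicial presheaves. On the other hand, the composite $\hat{x}_i^*=x_i^*\circ \alpha$ preserves finite limits, since the sheafification $\alpha$ is left exact and topos points are left exact, and it preserves Kan fibrations, since it is a filtered colimit of evaluation functors (and filtered colimits of Kan fibrations are Kan fibrations). Consequently, a homotopy pullback in $\SPre(\C)$, represented after projective fibrant replacement by a strict pullback along a projective fibration, is mapped by $\hat{x}_i^*\Uc$ to a strict pullback of simplicial sets along a Kan fibration, that is, to a homotopy pullback in $\sSet$.

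Given this, suppose one leg of a homotopy pullback square in $\SPre(\C)$ is a local weak equivalence. Applying $\hat{x}_i^*\Uc$ yields a homotopy pullback square in $\sSet$ in which the image of that leg is a weak equivalence; stability of simplicial weak equivalences under homotopy pullback then forces the image of the opposite leg to be a weak equivalence as well. Running this for every point $\hat{x}_i^*$ shows that the opposite leg is a local weak equivalence, which verifies the hypothesis of Proposition \ref{criterion}. The only non-formal point is the simultaneous preservation of finite limits and projective fibrations by $\hat{x}_i^*\Uc$; once this is in hand, the remainder is essentially a diagram chase. A more intrinsic approach via Theorem \ref{Giraud}, verifying \eqref{HD1} and \eqref{HD2} directly, is also available but seems strictly more laborious than the route through Proposition \ref{criterion}.
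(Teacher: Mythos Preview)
Your proposal is correct and follows essentially the same route as the paper: both invoke Proposition~\ref{criterion} and verify closure of local weak equivalences under homotopy pullbacks by using that the point functors $\hat{x}_i^*\Uc$ preserve pullbacks and send (objectwise) fibrations to Kan fibrations, then appeal to right properness of $\sSet$. You supply a bit more justification than the paper (existence of a localizing \emph{set} $S$, and the filtered-colimit argument for why $\hat{x}_i^*$ preserves Kan fibrations), but the underlying argument is identical.
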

\begin{proof}
By Proposition~\ref{criterion}, it suffices to show that for every pullback square in $\SPre(\C)$ 
\[
\xymatrix{
X \ar[r]^{g'} \ar[d] & Y \ar[d]^p \\
X' \ar[r]^g & Y'
}
\]
where $p$ is a objectwise fibration and $g$ is a local weak equivalence, then $g'$ is also a local weak equivalence.
This is a consequence of the right properness of $\sSet$ using the fact that the functors of points $\hat{x}_i^*$ preserve pullbacks and send objectwise fibrations to fibrations of simplicial sets.
\end{proof}

\begin{remark}(Naturality)\label{remark:naturality}
Let $\C$ and $\C'$ be small simplicial categories whose underlying categories $\C_0$ and $\C_0'$ are equipped with Grothendieck topologies $\tau$ and $\tau'$.
Let $F \colon \C \to \C'$ be a simplicial functor which restricts to a morphism 
of sites $F_0\colon (\C_0,\tau)\to(\C'_0,\tau')$.
There is a Quillen adjunction between projective model categories
\[
F_{!} : \SPre(\C) \rightleftarrows \SPre(\C') : F^* .
\]
However, the functor $\SPre(\C)_{\Uc \tau} \xrightarrow{F_!} \SPre(\C')_{\Uc \tau'}$ is not a left Quillen functor in general.
To see this, let $\C$ be a simplicial category with underlying category $\C_0$ considered as a discrete simplicial category.
There is a canonical simplicial functor 
\[
F\colon \C_0 \to \C
\]
which is the identity on objects.
The associated adjunction $(F_!, F^*)$ can be identified with the adjunction $(\Hs, \Uc)$.
But the adjunction
\[
\Hs: \sPre(\C_0, \tau) \rightleftarrows \SPre(\C)_{\Uc\tau}: \Uc
\]
is \emph{not} a Quillen adjunction in general (see Corollary~\ref{notanadjunction}).
\end{remark}

\begin{remark} \label{U-left-Quillen}
The functor $\Uc \colon \SPre(\C)_{\Uc \tau} \to \sPre^{\mathrm{inj}}(\C_0, \tau)$ is a \emph{left} Quillen functor if we use the local injective model category $\sPre^{\mathrm{inj}}(\C_0, \tau)$ where the cofibrations are the monomorphisms and the weak equivalences are the local weak equivalences defined as before. $\Uc$ has a right adjoint and it preserves monomorphisms and weak equivalences.
Moreover, $\Uc$ preserves and detects homotopy pullbacks. To see this, it suffices to note that homotopy pullbacks in these model categories can be calculated by replacing morphisms by local fibrations, that is, morphisms which restrict to fibrations of simplicial sets at every point 
$x_i^*$ of $\mathrm{Sh}(\C_0)$.
\end{remark}

\subsection{Model topoi from Grothendieck topologies on $\mathrm{Ho}(\C)$} \label{sheaves}
General constructions of model topoi (or $\infty$-topoi) that arise from a Grothendieck topology were introduced and studied in \cite{TV} and \cite{htt}.
In that context, a Grothendieck topology on a simplicial category (or $\infty$-category) $\C$ is a Grothendieck topology on the associated homotopy category $\mathrm{Ho}(\C)$.
This context differs from our main example of a model topos, the $\Uc$-local model topos (see Theorem~\ref{local-topos}), because there the Grothendieck topology and the simplicial enrichment are given 
independently. The purpose of this subsection is to review some parts of the theory of model topoi from \cite{TV} before we discuss the connection with the $\Uc$-local model topoi in the 
next subsection. 

Let $\C$ be a small simplicial category with a Grothendieck topology $\ttau$ on $\mathrm{Ho}(\C)$.
For each simplicial presheaf $F \in \SPre(\C)$, there is an associated sheaf of connected components $\tilde\pi_0(F)$ on $\mathrm{Ho}(\C)$ and sheaves of homotopy groups $\tilde\pi_n(F,s)$ on $\mathrm{Ho}(\C/x)$, for $n \geq 1$ and $s \in \tilde\pi_0(F(x))$.
(These are denoted $\pi_0(F)$ and $\pi_n(F, s)$, respectively, in \cite{TV}.)
These are the $\ttau$-sheaves associated to taking homotopy groups objectwise. A morphism $\eta \colon F \to G$ in $\SPre(\C)$ is a \emph{$\tilde \pi_*$-equivalence} if it induces isomorphisms of sheaves 
\begin{align*}
 \tilde{\pi}_0(F)    &\to \tilde{\pi}_0(G)\\
 \tilde{\pi}_n(F, s) &\to \tilde{\pi}_n(G, \eta(s))
\end{align*}
for all $n\geq 1$ and sections $s \in \tilde\pi_0(F(x))$ (see \cite[Sect.~3]{TV}).
We say that $\eta\colon F \to G$ is a \emph{global fibration} if it has the right lifting property with respect to all morphisms which are projective cofibrations and $\tilde \pi_*$-equivalences.
The corresponding notion of a globally fibrant object encodes the property of being a homotopy sheaf with respect to hypercovers defined by $\ttau$ (see \cite[3.4]{TV}). 

\begin{theorem}[To\"en--Vezzosi \cite{TV}] \label{toen-vezzosi}
Let $\C$ be a small simplicial category with a Grothendieck topology $\ttau$ on $\mathrm{Ho}(\C)$.
Then the classes of projective cofibrations, \mbox{$\tilde\pi_*$-equivalences} and global fibrations define a proper simplicial combinatorial model structure on the category $\SPre(\C)$. 
\end{theorem}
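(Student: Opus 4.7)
The plan is to realize this model structure as the left Bousfield localization of the projective model structure on $\SPre(\C)$ (Theorem~\ref{proj-simp}) at a set $H$ of morphisms encoding the $\ttau$-hypercovers. Since the projective model structure is combinatorial, simplicial and left proper, the existence of this Bousfield localization with all three of those properties, and with the projective cofibrations as cofibrations, is guaranteed by \cite[A.3.7]{htt}. This reduces the theorem to three tasks: identifying the $H$-local equivalences with the $\tilde\pi_*$-equivalences, identifying the $H$-local fibrations with the global fibrations, and verifying right properness.

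The identification of weak equivalences proceeds in two directions. That every $H$-local equivalence is a $\tilde\pi_*$-equivalence follows once one checks that hypercovers themselves are $\tilde\pi_*$-equivalences and that the class of $\tilde\pi_*$-equivalences satisfies 2-out-of-3, closure under retracts, and the homotopy-colimit closure properties that generate $H$-local equivalences from $H$; all of these are consequences of the exactness of $\ttau$-sheafification combined with the analogous properties of weak equivalences in $\sSet$. The converse is a Whitehead-style argument: if $\eta$ induces isomorphisms on all sheaves $\tilde\pi_n$, then after replacing $\eta$ by a projective cofibration between globally fibrant objects, $\ttau$-locally there exist homotopy sections, which exhibits $\eta$ as an $H$-local equivalence. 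The coincidence of global fibrations with the fibrations of the Bousfield localization follows from the standard description of fibrations in a left Bousfield localization (maps with the right lifting property against trivial projective cofibrations), once one knows the weak equivalences agree.

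The main obstacle is right properness, which is not in general inherited from a Bousfield localization. Following the pattern of the proof of Theorem~\ref{local-simp}, I would argue that a global fibration is in particular an objectwise fibration, because the objectwise weak equivalences are $\tilde\pi_*$-equivalences. The sheafification functor associated to $\ttau$ is exact, so the functors computing the sheaves $\tilde\pi_n$ preserve pullbacks along such fibrations. Under the assumption that the topos of sheaves on $\mathrm{Ho}(\C)$ has enough points, this reduces right properness to the right properness of $\sSet$ one stalk at a time, exactly as in Theorem~\ref{local-simp}; in the absence of enough points, the same reduction can be carried out intrinsically using a $\ttau$-hypercover computation of the homotopy sheaves, again bootstrapping off the right properness of $\sSet$ and the exactness of sheafification.
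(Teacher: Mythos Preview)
The paper does not give its own proof of this theorem: it simply cites \cite[Thm.~3.4.1]{TV}. So there is no argument in the paper to compare against beyond the reference itself.

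Your outline is broadly the strategy carried out in \cite{TV}: build the model structure as a left Bousfield localization of the projective model structure at a set of hypercovers, then identify the local equivalences with the $\tilde\pi_*$-equivalences. A few points in your sketch deserve tightening if you intend this as a standalone proof rather than a summary. First, in the Whitehead step you write ``after replacing $\eta$ by a projective cofibration between globally fibrant objects''; but global fibrations were \emph{defined} in terms of $\tilde\pi_*$-equivalences, so at this stage of the argument you only have $H$-local fibrant objects available --- be careful not to presuppose the identification you are proving. Second, your right-properness argument splits into the enough-points case (clean, parallel to Theorem~\ref{local-simp}) and a general case handled by ``a $\ttau$-hypercover computation of the homotopy sheaves''; the latter is where the actual work lies and is not a routine bootstrapping --- in \cite{TV} right properness is obtained via a more careful analysis of local fibrations and the interplay with truncations, not by a direct stalk-free imitation of the points argument. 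If you want a self-contained account, that is the step to flesh out; otherwise, citing \cite{TV} as the paper does is entirely appropriate for a result attributed to To\"en--Vezzosi.
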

\begin{proof}
See \cite[Thm.~3.4.1]{TV}.
\end{proof}

We denote this model category by $\SPre(\C, \ttau)$.
The left Quillen functor 
\[
 \mathrm{id} \colon \SPre(\C) \to \SPre(\C, \ttau)
\]
is homotopy left exact and therefore $\SPre(\C, \ttau)$ is a model topos \cite[Prop.~3.4.10]{TV}.

\begin{remark}
Let $\C$ be an ordinary category, considered as a discrete simplicial category, and let $\tau=\ttau$ be a Grothendieck topology on $\C=\mathrm{Ho}(\C)$.
In this case, the model structure $\SPre(\C, \ttau)$ from Theorem~\ref{toen-vezzosi} agrees with the $\Uc$-local model structure $\SPre(\C)_{\mathrm{\mathcal{U}\tau}}$ 
from Theorem~\ref{local-simp} and both agree with the local model structure on $\sPre(\C)$.
In particular, there is no conflict with the notation $\sPre(\C,\tau)$ introduced before Remark~\ref{choices-of-cof}.
\end{remark}

Moreover, we have the following classification theorem. 

\begin{theorem}[To\"en--Vezzosi \cite{TV}] \label{toen-vezzosi2}
Let $\C$ be a small simplicial category.
Then there is a bijective correspondence between Grothendieck topologies $\ttau$ on $\mathrm{Ho}(\C)$ and homotopy left exact left Bousfield localizations of $\SPre(\C)$ which are $t$-complete. 
\end{theorem}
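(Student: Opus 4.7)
The plan is to construct the bijection by defining the two maps of the correspondence and verifying they are mutually inverse, with $t$-completeness providing the rigidity needed for injectivity on the localization side.

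From topologies to localizations: given a Grothendieck topology $\ttau$ on $\mathrm{Ho}(\C)$, apply Theorem~\ref{toen-vezzosi} to form the local model structure $\SPre(\C,\ttau)$. The identity functor $\SPre(\C) \to \SPre(\C,\ttau)$ is homotopy left exact by \cite[Prop.~3.4.10]{TV}, and $t$-completeness is essentially built into the definition, since the $\tilde\pi_*$-equivalences are exactly the morphisms detected by the sheaves of homotopy groups, so a morphism is a weak equivalence precisely when it is one at every truncation level.

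From localizations to topologies: given a homotopy left exact $t$-complete left Bousfield localization $\SPre(\C)_S$, define $\ttau_S$ on $\mathrm{Ho}(\C)$ by declaring a sieve $R \hookrightarrow h_X$ on $X$ to be covering iff the monomorphism $R \hookrightarrow h_X$ becomes an effective epimorphism in the model topos $\SPre(\C)_S$, equivalently iff $\tilde\pi_0(R) \to \tilde\pi_0(h_X)$ is an epimorphism of $\ttau_S$-sheaves after localization. Verifying that $\ttau_S$ is a Grothendieck topology reduces to standard closure properties: stability under pullback follows from homotopy left exactness of $\id_S$ (using Proposition~\ref{criterion}), while the local character axiom follows because $S$-local equivalences are closed under homotopy colimits in the arrow category. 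Maximality and transitivity are immediate from the 2-out-of-3 property.

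To see that these assignments are mutually inverse, one direction starts from $\ttau$, forms $\SPre(\C,\ttau)$, and recovers $\ttau_S$: by construction of the $\tilde\pi_*$-equivalences, a sieve $R \hookrightarrow h_X$ becomes an effective epimorphism in $\SPre(\C,\ttau)$ iff the $\tilde\pi_0$-sheafification forces $R$ to cover $h_X$, which recovers exactly the $\ttau$-covering sieves. The other direction starts from $S$, extracts $\ttau_S$, and forms $\SPre(\C,\ttau_S)$: one must check that the classes of $S$-local and $\ttau_S$-local equivalences on $\SPre(\C)$ coincide. Here $t$-completeness is indispensable: it reduces the comparison of weak equivalences to an inductive comparison on Postnikov sections, so it suffices to check that $S$-locality and $\ttau_S$-locality agree on each $n$-truncated stage. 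At the $0$-truncated stage this is the definition of $\ttau_S$; the inductive step builds $n$-truncated $S$-local objects from $(n-1)$-truncated ones by classifying objects of the appropriate Eilenberg--MacLane gerbes, and the homotopy left exactness of $\id_S$ ensures these gerbes are again $\ttau_S$-sheaves.

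The main obstacle is this last inductive identification of weak equivalences: one must control the interaction between $t$-completeness and the sheaf-cohomological data of the topology at every Postnikov level, and this is precisely the technical core handled in the proof of \cite[Thm.~3.8.3]{TV}, to which the verification ultimately reduces.
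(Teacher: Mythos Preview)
The paper does not give its own proof of this statement; it simply records the result and cites \cite[Thm.~3.8.3]{TV}. Your proposal is therefore not an alternative to anything in the paper but rather an expanded sketch of what that reference does, and in the end you too defer to the same citation. As an outline of the argument in \cite{TV} it is broadly accurate.

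One point worth aligning with the paper: in Remark~\ref{bijectivecorrespondenceofTV}, which immediately follows the theorem, the paper records the passage from a localization $\SPre(\C)_S$ to a topology by declaring a sieve $U \rightarrowtail y_{\mathrm{Ho}(\C)}(X)$ to be covering when its pullback $\eta^*(U) \rightarrowtail \eta^*(y_{\mathrm{Ho}(\C)}(X))$ along the canonical functor $\eta\colon \C \to \mathrm{Ho}(\C)$ is an $S$-local \emph{equivalence}. Your phrasing in terms of effective epimorphisms is equivalent (since left exactness of $\id_S$ preserves monomorphisms, and a monomorphism that is an effective epimorphism is an equivalence), but you should make the $\eta^*$-step explicit: a sieve on $X$ in $\mathrm{Ho}(\C)$ lives in $\mathrm{PSh}(\mathrm{Ho}(\C))$, not in $\SPre(\C)$, and it is only after pulling back along $\eta$ that the $S$-locality condition makes sense.
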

\begin{proof}
See \cite[Thm.~3.8.3]{TV}.
\end{proof}

The notion of $t$-completeness (or hypercompleteness \cite{htt}) refers to hyperdescent as opposed to plain descent with respect to the \v{C}ech covers.
In other words, it means that the class of weak equivalences can be specified in terms of homotopy sheaves or, equivalently, that it can be detected by truncated objects.
We refer to \cite{TV, htt} for more details. 

\begin{remark}
The $\infty$-topoi of sheaves in \cite{htt} are defined in terms of \v{C}ech descent, that is, they are obtained as localizations of $\infty$-categories of presheaves at the collection of covering sieves that define the Grothendieck topology.
These $\infty$-topoi define \emph{topological} localizations \cite[Def.~6.2.1.4, Prop.~6.2.2.7]{htt}.
Lurie \cite{htt} proved a related classification result saying that there is a bijective correspondence between Grothendieck topologies $\ttau$ on $\mathrm{Ho}(\C)$ and topological localizations of the presentable $\infty$-category of presheaves associated to $\C$ \cite[Prop.~6.2.2.17]{htt}.
The model topos of Theorem~\ref{toen-vezzosi} corresponds to the hypercompletion (or $t$-completion) of the $\infty$-topos of sheaves in the sense of Lurie \cite{htt}.
\end{remark}

\begin{remark}\label{bijectivecorrespondenceofTV}
We recall the definition of the bijective correspondence in Theorem~\ref{toen-vezzosi2}.
One direction is given by the construction of Theorem~\ref{toen-vezzosi}.
For the other direction, consider a homotopy left exact left Bousfield localization 
\[
\mathrm{id}_S \colon \SPre(\C) \to \SPre(\C)_S
\]
from which we want to extract a Grothendieck topology $\ttau$ on $\mathrm{Ho}(\C)$.
The adjunction $(\pi_0\dashv \text{discrete})$ of functors between simplicial sets and sets gives rise to a natural simplicial functor
\[
\C\xrightarrow{\eta} \mathrm{Ho}(\C)
\]
and hence to an adjunction  
\[
\eta_{!} : \SPre(\C) \rightleftarrows \SPre(\mathrm{Ho}(\C)) \cong \sPre(\mathrm{Ho}(\C)) : \eta^* 
\]
between the categories of simplicially enriched presheaves on the respective simplicial categories.
Consider the full sub\-ca\-tego\-ry $\mathrm{PSh}(\mathrm{Ho}(\C))\subseteq \sPre(\mathrm{Ho}(\C))$ of set-valued presheaves.
Then, a sieve on $X \in \mathrm{Ho}(\C)$, 
\[
U \rightarrowtail y_{\mathrm{Ho}(\C)}(X),
\]
is a $\ttau$-covering sieve if
\[
\eta^*(U) \rightarrowtail \eta^*(y_{\mathrm{Ho}(\C)}(X))
\]
is an $S$-local equivalence in $\SPre(\C)$. Here $y$ denotes the Yoneda embedding.    
\end{remark}

\subsection{Comparing Grothendieck topologies}\label{comparinggrothendiecktopologies}

Let $\C$ be a small simplicial category.
The purpose of this subsection is to compare Grothendieck topologies $\tau$ on the underlying category $\C_0$ of $\C$ with Grothendieck topologies $\ttau$ on $\mathrm{Ho}(\C)$, 
as considered by To\"en--Vezzosi~\cite{TV} and Lurie~\cite{htt}, with a view towards comparing the $\Uc$-local model topos $\SPre(\C)_{\mathrm{\mathcal{U}\tau}}$ of Theorem~\ref{local-topos} with the To\"en--Vezzosi model topos $\SPre(\C, \ttau)$ of Theorem~\ref{toen-vezzosi}.
These two constructions of model topoi differ in general because in the first case the definition of the covering sieves does not take into account the simplicial enrichment.

First, using the bijective correspondence from Theorem \ref{toen-vezzosi2}, we can identify the Grothendieck topology $\ttau$ on $\mathrm{Ho}(\C)$ that is associated with the $\Uc$-local model topos.
Let $\C$ be a small simplicial category whose underlying ordinary category $\C_0$ is endowed with a Grothendieck topology $\tau$.
The triple of functors $(\pi_0\dashv \text{discrete}\dashv -_0)$ between simplicial sets and sets induces two natural simplicial functors
\[
\C_0 \xrightarrow{\epsilon} \C\xrightarrow{\eta} \mathrm{Ho}(\C)
\]
whose composition is the localization functor $\gamma \colon \C_0 \to \mathrm{Ho}(\C)$.
We obtain two simplicially enriched adjunctions of the associated presheaf categories
\begin{equation}\label{compositeadjunction}
  \sPre(\C_0) \cong \SPre(\C_0) \mathrel{\mathop{\myrightleftarrows{\rule{3ex}{0ex}}}^{\epsilon_!}_{\epsilon^*}} \SPre(\C) \mathrel{\mathop{\myrightleftarrows{\rule{3ex}{0ex}}}^{\eta_!}_{\eta^*}} \SPre(\mathrm{Ho}(\C)) \cong \sPre(\mathrm{Ho}(\C)) 
\end{equation}
with composite adjunction
\[
\gamma_! : \sPre(\C_0) \rightleftarrows \sPre(\mathrm{Ho}(\C)) : \gamma^* 
\]
where the right adjoints are given by precomposition with (the opposite of) the respective functor.
We use the same notation to denote the restriction of this last adjunction to the set-valued presheaf categories
$$
\gamma_! : \mathrm{PSh}(\C_0) \rightleftarrows \mathrm{PSh}(\mathrm{Ho}(\C)) : \gamma^*  
$$
Note that the adjunction $(\epsilon_!,\epsilon^*)$ is identified with the adjunction $(\Hs,\Uc)$ from \eqref{eq:adjunctionHandU}, and that the adjunction $(\eta_!,\eta^*)$ was already considered in Remark~\ref{bijectivecorrespondenceofTV}.

Following the description of the bijection in Theorem~\ref{toen-vezzosi2} as explained in Re\-mark~\ref{bijectivecorrespondenceofTV}, we say that a sieve on $X \in \mathrm{Ho}(\C)$, $U \rightarrowtail y_{\mathrm{Ho}(\C)}(X)$, is a $[\tau]$-\emph{covering sieve} if 
\[
\eta^*(U) \rightarrowtail \eta^*(y_{\mathrm{Ho}(\C)}(X))
\]
is a $\Uc$-local equivalence in $\SPre(\C)$. Let $[\tau]$ denote the collection of \mbox{$[\tau]$-covering} sieves. 
By Theorem~\ref{local-topos} and using similar arguments as in the definition of the bijection in Theorem~\ref{toen-vezzosi2}, it follows that $[\tau]$ defines a Grothendieck 
topology. Indeed the left exact Bousfield localization from Theorem~\ref{local-topos} induces a left exact localization of the category of presheaves on $\mathrm{Ho}(\C)$ after 
restricting to the $0$-truncated objects. By definition, this left exact localization corresponds to the Grothendieck topology $[\tau]$ (see also \cite{TV}). 

We write $\alpha_{\tau}$ for the $\tau$-sheafification functor on $\mathrm{PSh}(\C_0)$ and call a morphism in $\mathrm{PSh}(\C_0)$ a \emph{$\tau$-isomorphism} if it becomes an isomorphism after \mbox{$\tau$-sheafification}.
Likewise, we write $\alpha_{[\tau]}$ to denote the $[\tau]$-sheafification functor on $\mathrm{PSh}(\mathrm{Ho}(\C))$ and say that a morphism in $\mathrm{PSh}(\mathrm{Ho}(\C))$ is 
a \emph{$[\tau]$-isomorphism} if it becomes an isomorphism after $[\tau]$-sheafification.

\begin{remark}\label{gammaremark}
Using the composite adjunction \eqref{compositeadjunction} and the identification $\Uc\cong \epsilon^*$, a sieve $U \rightarrowtail y_{\mathrm{Ho}(\C)}(X)$ is a $[\tau]$-\emph{covering sieve} if and only if $\gamma^*(U) \rightarrowtail \gamma^*(y_{\mathrm{Ho}(\C)}(X))$
is a $\tau$-isomorphism.
\end{remark}

\begin{lemma} \label{compare-sheafifications}
Let $\C$ be a small simplicial category whose underlying ordinary category $\C_0$ is endowed with a Grothendieck topology $\tau$.
Then a morphism $f \colon F \to G$ in $\mathrm{PSh}(\mathrm{Ho}(\C))$ is a $[\tau]$-isomorphism if and only if the morphism in $\mathrm{PSh}(\C_0)$
\[
\gamma^{*}(f) \colon \gamma^*(F)  \to \gamma^*(G) 
\]
is a $\tau$-isomorphism.
\end{lemma}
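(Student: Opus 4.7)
Plan. I would prove both directions via the bicovering characterisation of local isomorphisms: a morphism of presheaves is a $\tau$-isomorphism (respectively a $[\tau]$-isomorphism) if and only if it is both locally surjective and locally injective with respect to $\tau$ (resp.\ $[\tau]$). A key technical point that I will use throughout is that $\gamma \colon \C_{0} \to \mathrm{Ho}(\C)$ is the identity on objects, so that $\gamma^{*}$ sits in a chain of adjunctions $\gamma_{!} \dashv \gamma^{*} \dashv \gamma_{*}$ between the presheaf categories and therefore preserves all limits and colimits; moreover, one has a natural identification $\gamma^{*}(G)(Y) = G(Y)$ for $Y \in \C_{0}$ intertwining the restriction along a morphism $v \colon W \to Y$ in $\C_{0}$ with the restriction along $[v] \colon W \to Y$ in $\mathrm{Ho}(\C)$.

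For the direction $(\Leftarrow)$, assume $\gamma^{*}(f)$ is a $\tau$-isomorphism. To verify $[\tau]$-local surjectivity of $f$: given $X \in \mathrm{Ho}(\C)$ and $s \in G(X)$, form the sieve $V_{s} \subseteq y_{\mathrm{Ho}(\C)}(X)$ defined by $V_{s}(Y) = \{[u] \colon Y \to X \mid s \cdot [u] \in f_{Y}(F(Y))\}$. By Remark~\ref{gammaremark} it suffices to show that the monomorphism $\gamma^{*}(V_{s}) \hookrightarrow \gamma^{*}(y_{\mathrm{Ho}(\C)}(X))$ is $\tau$-locally surjective. Given any section $[u] \in \gamma^{*}(y_{\mathrm{Ho}(\C)}(X))(Y) = \mathrm{Hom}_{\mathrm{Ho}(\C)}(Y,X)$, I apply the $\tau$-local surjectivity of $\gamma^{*}(f)$ to the section $s \cdot [u] \in G(Y) = \gamma^{*}(G)(Y)$: it provides a $\tau$-covering sieve $S$ on $Y$ in $\C_{0}$ such that $s \cdot [u] \cdot [v] = s \cdot [u \circ v] \in f_{W}(F(W))$ for every $v \colon W \to Y$ in $S$, which is exactly the statement that $[u \circ v] \in V_{s}(W) = \gamma^{*}(V_{s})(W)$. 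The $[\tau]$-local injectivity of $f$ follows analogously, by applying the $\tau$-local injectivity of $\gamma^{*}(f)$ to the pulled-back pair $a \cdot [u], b \cdot [u] \in F(Y)$ for each $a, b \in F(X)$ with $f_{X}(a) = f_{X}(b)$.

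For the direction $(\Rightarrow)$ I would give a cleaner argument by saturation. The class of $\tau$-isomorphisms in $\mathrm{PSh}(\C_{0})$ is the strongly saturated class generated by the inclusions of $\tau$-covering sieves into representables. Since $\gamma^{*}$ preserves all limits and colimits, the class
\[
\mathcal{T} = \bigl\{ f \in \mathrm{PSh}(\mathrm{Ho}(\C))^{\to} : \gamma^{*}(f) \text{ is a } \tau\text{-isomorphism} \bigr\}
\]
is strongly saturated, and by Remark~\ref{gammaremark} it contains the inclusions of all $[\tau]$-covering sieves into representables; since the latter generate the strongly saturated class of $[\tau]$-isomorphisms, $\mathcal{T}$ contains every $[\tau]$-isomorphism. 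The main obstacle is the direction $(\Leftarrow)$: one cannot simply transfer covers along $\gamma$, because the canonical presheaf map $y_{\C_{0}}(Y) \to \gamma^{*}(y_{\mathrm{Ho}(\C)}(Y))$, $v \mapsto [v]$, is in general not an isomorphism (only a pointwise surjection), so the comparison of $\tau$-covers on $\C_{0}$ with $[\tau]$-covers on $\mathrm{Ho}(\C)$ must be performed section-by-section through the local surjectivity/injectivity conditions as above.
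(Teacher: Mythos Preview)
Your proof is correct, but it follows a genuinely different route from the paper's.

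The paper's argument is a one-line definitional unwinding. Immediately before the lemma, the paper records that the left exact Bousfield localization $\SPre(\C)\to\SPre(\C)_{\Uc\tau}$, restricted to $0$-truncated objects, \emph{is} the $[\tau]$-sheafification on $\mathrm{PSh}(\mathrm{Ho}(\C))$ (this is how $[\tau]$ arises via the To\"en--Vezzosi bijection of Theorem~\ref{toen-vezzosi2}). Granting that, $f$ is a $[\tau]$-isomorphism iff $\eta^{*}(f)$ is a weak equivalence in $\SPre(\C)_{\Uc\tau}$, i.e.\ iff $\Uc\eta^{*}(f)=\gamma^{*}(f)$ is a local weak equivalence in $\sPre(\C_{0})$, which for discrete presheaves is precisely the statement that $\gamma^{*}(f)$ is a $\tau$-isomorphism. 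Both directions come for free.

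Your argument avoids this identification entirely and works only from the sieve-level description in Remark~\ref{gammaremark}. For $(\Leftarrow)$ you verify the bicovering criterion by hand, pulling sections along an arbitrary $[u]\in\gamma^{*}(y_{\mathrm{Ho}(\C)}(X))(Y)$ and invoking $\tau$-local surjectivity/injectivity of $\gamma^{*}(f)$; for $(\Rightarrow)$ you use that $\gamma^{*}$ preserves colimits (via $\gamma_{!}\dashv\gamma^{*}$) so that the preimage of the $\tau$-isomorphisms is strongly saturated and contains the generating $[\tau]$-sieve inclusions. This is more elementary and more transparent about where exactly the characterisation of $[\tau]$-covers is used, at the cost of being longer. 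The paper's route, by contrast, makes essential use of the To\"en--Vezzosi machinery already in place, which is why it can dispatch both directions simultaneously without ever touching individual sections or sieves.
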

\begin{proof}
This follows from unwinding the definitions. The $[\tau]$-sheafification functor $\alpha_{[\tau]} \colon \mathrm{PSh}(\mathrm{Ho}(\C)) \to \mathrm{Sh}(\mathrm{Ho}(\C), [\tau])$ is identified by definition with the restriction of the homotopy left exact left Bousfield localization $\SPre(\C) \to \SPre(\C)_{\Uc \tau}$ to the 0-truncated objects. Thus, $f \colon F \to G$ is a $[\tau]$-isomorphism if and only if $\eta^*(f) \colon \eta^*(F) \to \eta^*(G)$ is a weak equivalence in $\SPre(\C)_{\Uc \tau}$, that is, if and only if the morphism $\gamma^*(f) \colon \gamma^*(F) \to \gamma^*(G)$ is a local weak equivalence 
in $\sPre(\C_0)$, which means that $\gamma^*(f)$ is a $\tau$-isomorphism.    
\end{proof}

\begin{remark} \label{cover-reflecting}
Using \cite[Prop.~C2.3.18]{JoTopos}, the previous Lemma~\ref{compare-sheafifications} implies that the Grothendieck topology $[\tau]$ makes the functor $\gamma \colon \C_0 \to \mathrm{Ho}(\C)$ \emph{cover-reflecting}.
This means that given a $[\tau]$-covering sieve $U \rightarrowtail y_{\mathrm{Ho}(\C)}(X)$, then the sieve on $X \in \C$ which consists of all $f \colon V \to X$ in $\C$ such that $[f] \in U$ is a $\tau$-covering sieve.
Moreover, the right Kan extension 
\[
\gamma_*\colon\mathrm{PSh}(\C_0) \to \mathrm{PSh}(\mathrm{Ho}(\C)) 
\]
sends $\tau$-sheaves to $[\tau]$-sheaves (see \cite[Prop.~C2.3.18]{JoTopos}, \cite[III.2]{SGA4}).  
\end{remark}

\begin{proposition} \label{tauDelta}
Let $\C$ be a small simplicial category whose underlying ordinary category $\C_0$ is endowed with a Grothendieck topology $\tau$.
Then the $\Uc$-local model category $\SPre(\C)_{\Uc \tau}$ is the same as $\SPre(\C, [\tau])$. 
\end{proposition}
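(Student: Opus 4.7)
The plan is to apply the To\"en--Vezzosi classification (Theorem~\ref{toen-vezzosi2}) and show that both sides correspond under the bijection to the same Grothendieck topology $[\tau]$ on $\mathrm{Ho}(\C)$. Both $\SPre(\C)_{\Uc\tau}$ and $\SPre(\C,[\tau])$ are left Bousfield localizations of the projective model category $\SPre(\C)$, so they automatically share the same class of cofibrations. Therefore it is enough to identify their classes of weak equivalences, and this I would do indirectly via the classification rather than by a direct morphism-by-morphism comparison.

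To apply Theorem~\ref{toen-vezzosi2}, I first need to check that $\SPre(\C)_{\Uc\tau}$ is a $t$-complete homotopy left exact left Bousfield localization of $\SPre(\C)$. Homotopy left exactness is already known: it is exactly what was shown in the proof of Theorem~\ref{local-topos} via Proposition~\ref{criterion}. For $t$-completeness, I would argue that the $\Uc$-local weak equivalences are, essentially by their definition, detected by stalks at a small set of enough points, or equivalently by the $\tau$-sheaves of homotopy groups of the underlying simplicial presheaves; hence they are detected by homotopy sheaves and thus by truncated objects, which is the content of $t$-completeness.

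Once these two properties are in place, I invoke the inverse direction of the bijection described in Remark~\ref{bijectivecorrespondenceofTV}. The Grothendieck topology on $\mathrm{Ho}(\C)$ associated with the $\Uc$-local model topos under this inverse is, tautologically, the collection of sieves $U \rightarrowtail y_{\mathrm{Ho}(\C)}(X)$ such that $\eta^*(U) \rightarrowtail \eta^*(y_{\mathrm{Ho}(\C)}(X))$ becomes a $\Uc$-local equivalence; this is literally the definition of $[\tau]$ given just before Lemma~\ref{compare-sheafifications}. On the other hand, the forward direction of the bijection sends $[\tau]$ to the model structure $\SPre(\C,[\tau])$ of Theorem~\ref{toen-vezzosi}. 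Bijectivity then forces $\SPre(\C,[\tau]) = \SPre(\C)_{\Uc\tau}$ as model structures on $\SPre(\C)$.

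The main obstacle I expect is the verification of $t$-completeness for $\SPre(\C)_{\Uc\tau}$, since it requires reconciling two \emph{a priori} different notions of sheaves of homotopy groups: the $\tau$-sheaves on $\C_0$ that enter the definition of $\Uc$-local equivalences, and the $[\tau]$-sheaves on $\mathrm{Ho}(\C)$ that govern $t$-completeness of a model topos over $\SPre(\C)$. The comparison is provided by Remark~\ref{cover-reflecting}, which tells us that $\gamma_*$ sends $\tau$-sheaves to $[\tau]$-sheaves, together with Lemma~\ref{compare-sheafifications}; once this identification is recorded, the two descriptions of the weak equivalences match and the classification argument above closes the proof.
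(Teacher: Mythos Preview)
Your approach via the To\"en--Vezzosi classification is valid and genuinely different from the paper's. The paper argues directly: since both model structures share the projective cofibrations, it compares the classes of weak equivalences head-on. Using the basepoint-free characterization of $\tilde\pi_*$-equivalences from \cite[Lemma~3.3.3]{TV}, the question reduces to checking that a morphism of $0$-truncated presheaves is a $\tilde\pi_0$-isomorphism for $[\tau]$ if and only if its image under $\gamma^*$ is a $\tau$-isomorphism, and that is precisely Lemma~\ref{compare-sheafifications}. No appeal to $t$-completeness or to Theorem~\ref{toen-vezzosi2} is made.

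Your route through the classification also works, and $t$-completeness is indeed the only nontrivial input. But note that $t$-completeness is an \emph{intrinsic} property of the model topos and does not require any comparison with $[\tau]$: your first argument for it is already the right one, since the stalk functors $\hat x_i^*\,\Uc$ preserve finite homotopy limits and all homotopy colimits, hence preserve $\infty$-connected morphisms, and $\sSet$ is hypercomplete. Your final paragraph therefore overcomplicates matters. If you verify $t$-completeness by matching the $\tau$- and $[\tau]$-homotopy sheaves via Lemma~\ref{compare-sheafifications} and Remark~\ref{cover-reflecting}, you have in effect carried out the paper's direct comparison of weak equivalences, and the detour through Theorem~\ref{toen-vezzosi2} becomes superfluous. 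Either keep the intrinsic stalk argument for $t$-completeness and then invoke the classification, or compare the weak equivalences directly as the paper does; mixing the two is redundant.
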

\begin{proof}
We recall that a morphism $f \colon F \to G$ in $\SPre(\C)$ is a $\Uc$-local weak equivalence if $\Uc(f)$ is a $\tilde{\pi}_*$-equivalence in $\sPre(\C_0, \tau)$.
We need to compare this class of morphisms with the class of $\tilde{\pi}_*$-equivalences in $\SPre(\C, [\tau])$. 
For our purposes here, it will be more convenient to use the characterization of \mbox{$\tilde{\pi}_*$-equivalences} in $\SPre(\C, [\tau])$ which does not involve basepoints \cite[Lemma~3.3.3]{TV}. 
According to this, an objectwise fibration $F \to G$ between objectwise fibrant objects in $\SPre(\C, [\tau])$ is a $\tilde{\pi}_*$-equivalence if for any $n \geq 0$, the induced morphism 
\[
F^{\Delta^n} \to F^{\partial \Delta^n} \times_{G^{\partial \Delta^n}} G^{\Delta^n} 
\]
is a $\tilde{\pi}_0$-isomorphism (with respect to $[\tau]$).
Note that there is a similar characterization of the weak equivalences in $\sPre(\C_0, \tau)$.
Then it follows from Lemma~\ref{compare-sheafifications} that a morphism $F \to G$ in $\SPre(\C)$ is a $\tilde{\pi}_*$-equivalence in $\SPre(\C, [\tau])$ if and only if it is a $\Uc$-local weak equivalence. The result follows. 
\end{proof}

The Grothendieck topology $[\tau]$ on $\mathrm{Ho}(\C)$ admits a more explicit description as follows.
Given a $\tau$-covering sieve $J \colon U \rightarrowtail y_{\C_0}(X)$ on $X \in \C_0$ which is generated by 
$\{f_{\alpha} \colon X_{\alpha} \to X\}$, let 
$$[J]\colon [U] \rightarrowtail y_{\mathrm{Ho}(\C)}(X)$$
denote the sieve on $X \in \mathrm{Ho}(\C)$ which is generated by $\{\gamma(f_{\alpha}) \colon X_{\alpha} \to X\}$.  

\begin{lemma} \label{identification-topologies}
Let $\C$ be a small simplicial category whose underlying ordinary category $\C_0$ is endowed with a Grothendieck topology $\tau$. 
A sieve $j \colon U \rightarrowtail y_{\mathrm{Ho}(\C)}(X)$ is a $[\tau]$-covering sieve if and only if it is of the form $[J]$ for some $\tau$-covering sieve 
$J \colon \widetilde{U} \rightarrowtail y_{\C_0}(X)$. 
\end{lemma}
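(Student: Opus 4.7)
The plan is to reduce the statement to Remark~\ref{gammaremark} on both sides. By that remark, a sieve $j \colon U \rightarrowtail y_{\mathrm{Ho}(\C)}(X)$ is $[\tau]$-covering if and only if its pullback $\gamma^{*}(U) \rightarrowtail \gamma^{*}(y_{\mathrm{Ho}(\C)}(X)) =: h_X$ is a $\tau$-isomorphism in $\mathrm{PSh}(\C_0)$. Throughout, the crucial observation is that every morphism in $\mathrm{Ho}(\C)$ lifts to one in $\C_0$: indeed, $\mathrm{Hom}_{\mathrm{Ho}(\C)}(V, X) = \pi_{0}\mathrm{map}_{\C}(V, X)$, and a $0$-simplex of this mapping space is precisely an element of $\C_{0}(V, X)$, so $\gamma \colon \C_0 \to \mathrm{Ho}(\C)$ is surjective on Hom-sets.

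For the ``only if'' direction, given a $[\tau]$-covering sieve $U \rightarrowtail y_{\mathrm{Ho}(\C)}(X)$, I will set
\[
\widetilde{U} := \{\, f \colon V \to X \text{ in } \C_{0} \mid [f] \in U \,\},
\]
which is a sieve on $X \in \C_{0}$. By the cover-reflecting property recorded in Remark~\ref{cover-reflecting}, $\widetilde{U}$ is a $\tau$-covering sieve. It then suffices to check $[\widetilde{U}] = U$. The inclusion $[\widetilde{U}] \subseteq U$ is immediate, since every generator $\gamma(f)$ with $f \in \widetilde{U}$ satisfies $\gamma(f) = [f] \in U$ and $U$ is a sieve. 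For the reverse inclusion, given any $\phi \colon V \to X$ in $U$, lift $\phi$ to some $\tilde\phi \colon V \to X$ in $\C_{0}$ using the essential surjectivity of $\gamma$ on morphisms; then $[\tilde\phi] = \phi \in U$, so $\tilde\phi \in \widetilde{U}$ and hence $\phi = \gamma(\tilde\phi) \in [\widetilde{U}]$.

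For the ``if'' direction, suppose $J$ is a $\tau$-covering sieve on $X$ generated by $\{f_\alpha \colon X_\alpha \to X\}$. I need to show that the monomorphism of presheaves $\gamma^{*}([J]) \hookrightarrow h_X$ is a $\tau$-isomorphism, for which (being already a monomorphism) local surjectivity with respect to $\tau$ is enough. Given any object $Y \in \C_0$ and any section $\phi \in h_X(Y) = \mathrm{Hom}_{\mathrm{Ho}(\C)}(Y, X)$, choose a lift $\tilde\phi \colon Y \to X$ in $\C_{0}$ with $\gamma(\tilde\phi) = \phi$. The pullback $\tilde\phi^{*}(J)$ is a $\tau$-covering sieve on $Y$ by stability of $\tau$ under pullback. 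For any $g \colon Z \to Y$ in $\tilde\phi^{*}(J)$, the composite $\tilde\phi \circ g$ lies in $J$, so $\phi \circ \gamma(g) = \gamma(\tilde\phi \circ g) \in [J]$, showing that $\phi|_{Z} \in \gamma^{*}([J])(Z)$ after restriction along the $\tau$-cover $\tilde\phi^{*}(J)$. This verifies local surjectivity and finishes the proof.

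The only mild subtlety I expect is the verification that $\gamma$ is surjective on Hom-sets, which is where the simplicial nature of $\C$ genuinely enters; once this is in hand, the two directions amount to, respectively, an application of Remark~\ref{cover-reflecting} and a standard local-surjectivity argument using pullbacks of covering sieves.
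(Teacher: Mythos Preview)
Your proof is correct. The two directions build the same object $\widetilde{U}$ as the paper (your set-builder description is exactly the pullback of $\gamma^{*}(U)$ along $y_{\C_0}(X)\to\gamma^{*}(y_{\mathrm{Ho}(\C)}(X))$), and the key input---that $\gamma$ is surjective on Hom-sets---is handled correctly.

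The organization differs from the paper in two respects. For the ``only if'' direction, you invoke the cover-reflecting property from Remark~\ref{cover-reflecting} directly, whereas the paper re-derives it by applying $\tau$-sheafification to the defining pullback square. For the ``if'' direction, you give a direct local-surjectivity argument (lift a section $\phi$ to $\tilde{\phi}$ in $\C_0$, pull back $J$, and check restrictions land in $[J]$), while the paper introduces an intermediate sieve $J_{\Delta}$ obtained by pulling $\gamma^{*}[U]$ back to $y_{\C_0}(X)$, observes that $J\subseteq J_{\Delta}$ forces $J_{\Delta}$ to be $\tau$-covering, and then concludes via sheafification of the resulting pullback square. Your argument is more elementary and self-contained; the paper's argument, though slightly heavier, has the side benefit of producing the ``homotopical thickening'' $J\mapsto J_{\Delta}$, which is used in the discussion immediately following the lemma to explain the precise difference between $\tau$ and $[\tau]$.
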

\begin{proof}
Suppose that $j$ is a $[\tau]$-covering sieve.
Consider the pullback of presheaves on $\C_0$,
\[
 \xymatrix{
 \widetilde{U} \ar@{>->}[d]^{J} \ar@{->>}[r] & \gamma^*(U) \ar@{>->}[d]^{\gamma^*(j)} \\
 y_{\C_0}(X) \ar@{->>}[r] & \gamma^*(y_{\mathrm{Ho}(\C)}(X))
 }
\]
and apply $\tau$-sheafification to obtain a new pullback square 
\[
 \xymatrix{
  \alpha_{\tau}(\widetilde{U}) \ar@{>->}[d]^{\cong} \ar@{->>}[r] & \alpha_{\tau}\gamma^*(U) \ar[d]^{\cong} \\
 \alpha_{\tau}(y_{\C_0}(X)) \ar@{->>}[r] & \alpha_{\tau}\gamma^*(y_{\mathrm{Ho}(\C)}(X))
 }
\]
whence it follows that $J$ is a $\tau$-covering sieve (cf. Remark~\ref{cover-reflecting}).
Note that the composite morphism $ \widetilde{U} \to \gamma^*(y_{\mathrm{Ho}(\C)}(X))$ factors as follows 
\[
\widetilde{U} \twoheadrightarrow \gamma^*[\widetilde{U}] \rightarrowtail \gamma^*(y_{\mathrm{Ho}(\C)}(X)) 
\]
where the first morphism is an epimorphism.
Comparing with the factorization in the first diagram above, it follows that $[J] = j$. 

For the converse, suppose that $J \colon U \rightarrowtail y_{\C_0}(X)$ is a $\tau$-covering sieve.
Consider the pullback $U_{\Delta}$ of the following presheaves on $\C_0$,
\bigskip
\[
 \xymatrix{
 U \ar@{>->}[rd]_{J} \ar@{>-->}[r] \ar@/^1.7pc/@{->>}[rr] & U_{\Delta} \ar@{>->}[d]^{J_{\Delta}}  \ar@{->>}[r] & \gamma^*([U]) \ar@{>->}[d]^{\gamma^*[J]} \\
 & y_{\C_0}(X) \ar@{->>}[r] & \gamma^*(y_{\mathrm{Ho}(\C)}(X))
 }
\]
The sieve $J_{\Delta}$ is again a $\tau$-covering sieve since it contains $J$.
Applying $\tau$-sheafification $\alpha_\tau$, we obtain a pullback as follows
 \[
 \xymatrix{
 \alpha_{\tau}(U_{\Delta}) \ar@{>->}[d]^{\cong}  \ar@{->>}[r] & \alpha_{\tau} \gamma^*(([U]) \ar@{>->}[d]^{\alpha_{\tau}\gamma^*[J]} \\
  \alpha_{\tau}(y_{\C_0}(X)) \ar@{->>}[r] & \alpha_{\tau}\gamma^*(y_{\mathrm{Ho}(\C)}(X)) 
 }
\]
So $\alpha_{\tau}(\gamma^*[J])$ is an isomorphism and therefore $\gamma^*[J]$ is a $\tau$-covering sieve, as required. 
\end{proof}

The correspondence $J \mapsto J_{\Delta}$ that appears in the proof of Lemma \ref{identification-topologies} can be used to elucidate the main difference between the topologies $\tau$
and $[\tau]$. This correspondence sends a covering sieve $J$ to a larger covering sieve which consists of all elements which are homotopic to an element in $J$. 
It may be considered as a kind of homotopical thickening of $J$. Note that $[J] = [J_{\Delta}]$ and every $[\tau]$-covering sieve is $[J_{\Delta}]$ for a \emph{unique} 
covering sieve of the 
form $J_{\Delta}$. In particular, $[\tau]$ depends only on the homotopical thickenings of $\tau$-covering sieves, i.e., the covering sieves of the form $J_{\Delta}$. Moreover, the Grothendieck topology generated by the sieves 
of the form $J_{\Delta}$, for a $\tau$-covering sieve $J$, is the unique smallest Grothendieck topology on $\C_0$ such that $\gamma \colon \C_0 \to \mathrm{Ho}(\C)$ 
is cover-reflecting (see \cite[Lemma~C2.3.19]{JoTopos}). 

Furthermore, Lemma~\ref{identification-topologies} shows that $[\tau]$ is the smallest Grothendieck topology such that the localization functor $\gamma \colon \C_0 \to \mathrm{Ho}(\C)$ preserves covering sieves (see \cite[Lemma~C2.3.12]{JoTopos}).
But $\gamma$ is not a morphism of sites in general because it fails to satisfy the necessary flatness conditions (see, e.g., \cite[Rem.~C2.3.7]{JoTopos}).
We have the following results about the comparison between the different sheaf conditions. 

\begin{proposition} \label{reflecting-sheaf-condition}
Let $\C$ be a small simplicial category whose underlying ordinary category $\C_0$ is endowed with a Grothendieck topology $\tau$.
Let $F$ be an object of $\mathrm{PSh}(\mathrm{Ho}(\C))$.
If $\gamma^*(F)$ is a $\tau$-sheaf (resp. $\tau$-separated presheaf) on $\C_0$, then the presheaf $F$ is a $[\tau]$-sheaf (resp. $[\tau]$-separated presheaf). Conversely, if $F$ is a $[\tau]$-separated presheaf, then $\gamma^* F$ is a $\tau$-separated presheaf.
\end{proposition}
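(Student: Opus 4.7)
The plan rests on the observation that $\gamma^{*}\colon \mathrm{PSh}(\mathrm{Ho}(\C)) \to \mathrm{PSh}(\C_0)$ is fully faithful. Indeed, the functor $\gamma\colon \C_0 \to \mathrm{Ho}(\C)$ is the identity on objects and surjective on morphism sets (every element of $\pi_0(\mathrm{map}_{\C}(X,Y))$ is represented by a $0$-simplex), so any natural transformation $\gamma^{*}F \to \gamma^{*}G$ lifts uniquely to one $F \to G$. In particular, for $A, F \in \mathrm{PSh}(\mathrm{Ho}(\C))$ there is a canonical bijection $\mathrm{Hom}_{\mathrm{PSh}(\mathrm{Ho}(\C))}(A, F) \cong \mathrm{Hom}_{\mathrm{PSh}(\C_0)}(\gamma^{*}A, \gamma^{*}F)$.

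For the forward implications, I would rely on the standard characterisation that a presheaf $G$ on $(\C_0,\tau)$ is a $\tau$-sheaf (resp.\ a $\tau$-separated presheaf) if and only if $\mathrm{Hom}(-, G)$ sends every $\tau$-isomorphism $A \rightarrowtail B$ to a bijection (resp.\ an injection). The bijective case is immediate from the adjunction between $\tau$-sheafification and the inclusion of sheaves; the injective case uses the left exactness of $\alpha_{\tau}$ to pull back $A \rightarrowtail B$ along an element $b \in B(Y)$ and thereby produce a $\tau$-covering sieve of $Y$, after which separatedness of $G$ finishes the argument. Now, given a $[\tau]$-covering sieve $j\colon [U] \rightarrowtail y_{\mathrm{Ho}(\C)}(X)$, Remark~\ref{gammaremark} identifies $\gamma^{*}(j)$ as a $\tau$-isomorphism. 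Hence if $\gamma^{*}F$ is a $\tau$-sheaf (resp.\ $\tau$-separated), the induced map $\mathrm{Hom}(\gamma^{*}y_{\mathrm{Ho}(\C)}(X), \gamma^{*}F) \to \mathrm{Hom}(\gamma^{*}[U], \gamma^{*}F)$ is bijective (resp.\ injective); by fully faithfulness, this is exactly the map $F(X) \to \mathrm{Hom}([U], F)$, so $F$ is a $[\tau]$-sheaf (resp.\ $[\tau]$-separated).

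For the converse, assume $F$ is $[\tau]$-separated, and let $J\colon \widetilde{U} \rightarrowtail y_{\C_0}(X)$ be a $\tau$-covering sieve generated by morphisms $\{f_{\alpha}\colon X_{\alpha} \to X\}$. Given $a, b \in F(X) = (\gamma^{*}F)(X)$ whose images in $\mathrm{Hom}(\widetilde{U}, \gamma^{*}F)$ coincide, the identity $F(\gamma(f_{\alpha})) = (\gamma^{*}F)(f_{\alpha})$ shows that $a$ and $b$ restrict to the same element along each generator $\gamma(f_{\alpha})$ of the sieve $[J]$, which is a $[\tau]$-covering sieve by Lemma~\ref{identification-topologies}. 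Functoriality gives $a|_{[J]} = b|_{[J]}$, and $[\tau]$-separatedness of $F$ forces $a = b$, as required.

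The whole argument is driven by the fully-faithful inclusion $\gamma^{*}$; once this is observed, both directions reduce to unwinding the definitions using standard facts about sheafification. The only mildly subtle point is the injective case of the sheaf-theoretic lemma in the second paragraph, which relies on the left exactness of $\alpha_{\tau}$; no serious obstacle arises.
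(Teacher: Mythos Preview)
Your proof is correct and rests on the same key observation as the paper's: the full faithfulness of $\gamma^*$. The packaging differs slightly. The paper works with a single commutative square
\[
\xymatrix{
F(X) \ar[r] \ar@{=}[d] & \hom([U], F) \ar[d] \\
(\gamma^*F)(X) \ar[r] & \hom(U, \gamma^*F)
}
\]
where $U \rightarrowtail y_{\C_0}(X)$ is a $\tau$-covering sieve with associated $[\tau]$-covering sieve $[U]$ (via Lemma~\ref{identification-topologies}); the right vertical map is injective because $\gamma^*$ is fully faithful and $U \twoheadrightarrow \gamma^*[U]$ is an epimorphism, and all three implications are then read off directly from the square. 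You instead apply full faithfulness to the morphism $\gamma^*([U]) \rightarrowtail \gamma^*(y_{\mathrm{Ho}(\C)}(X))$ itself, which forces you to invoke the stronger characterisation of sheaves and separated presheaves as objects orthogonal (resp.\ injective) with respect to \emph{all} dense monomorphisms rather than just covering sieves. You handle this correctly --- the pullback argument using left exactness of $\alpha_\tau$ is exactly right --- but it is an extra step the paper's square sidesteps by keeping an honest $\tau$-covering sieve $U$ in play. For the converse you unwind the generators explicitly, whereas the paper just observes that if the top map in the square is a monomorphism then so is the bottom; these are equivalent arguments.
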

\begin{proof}
Let $[J]\colon [U] \rightarrowtail y_{\mathrm{Ho}(\C)}(X)$ be a $[\tau]$-covering sieve on $X \in \mathrm{Ho}(\C)$.
We need to show that the top map in the diagram 
\[
\xymatrix{
F(X) \cong \hom(y_{\mathrm{Ho}(\C)}(X), F) \ar[r] \ar@{=}[d] & \hom([U], F) \ar[d] \\
(\gamma^*F)(X) \cong \hom(y_{\C_0}(X), \gamma^*F) \ar[r]^(.65){\cong} & \hom(U, \gamma^*F)
}
\]
is an isomorphism (resp. monomorphism). The vertical maps are induced by $\gamma^*$ and the morphisms $y_{\C_0}(X) \to \gamma^* y_{\mathrm{Ho}(\C)}(X)$ and $U \to \gamma^* [U]$, respectively. The bottom map is an isomorphism (resp. monomophism) because $\gamma^*F$ is a $\tau$-sheaf (resp. $\tau$-separated presheaf). Therefore
the top map is a monomorphism. Since $\gamma^{*}$ is fully faithful and $U \to \gamma^* [U]$ is an epimorphism, the right vertical map is injective and the result follows. Conversely, if the top map is a monomorphism, then so is the bottom map as well. 
\end{proof}

\begin{remark} \label{difference-of-sheaves}
The converse statement for the sheaf condition is false in general, that is, $\gamma^{*} \colon \mathrm{PSh}(\mathrm{Ho}(\C)) \to \mathrm{PSh}(\C)$ does not preserve sheaves in general 
(see Example \ref{example-sheaf-condition} below).  Given a presheaf $F$ on $\mathrm{Ho}(\C)$, then $\gamma^*F$ is a $\tau$-sheaf if and only if $F$ is orthogonal with respect to the set 
of morphisms $\gamma_!(\tau)$ where 
\[
\gamma_! \colon \mathrm{PSh}(\C_0) \rightarrow \mathrm{PSh}(\mathrm{Ho}(\C)) 
\]
is the left adjoint of $\gamma^*$ (see also \cite[III.1]{SGA4}). But note that for a 
$\tau$-covering sieve $J \colon U \rightarrowtail y_{\C_0}(X)$, the induced epimorphism 
\[
q \colon \gamma_!(U) \twoheadrightarrow [U]
\]
is not a monomorphism in general. In general, the $[\tau]$-sheaf condition, i.e., orthogonality with respect to $[\tau]$, 
is \emph{weaker} than the $\tau$-sheaf condition. 
\end{remark}

\begin{remark}
We note the following immediate consequence of Lemma \ref{compare-sheafifications} and Proposition \ref{reflecting-sheaf-condition}. If $F$ is a presheaf on $\mathrm{Ho}(\C)$, 
then $\gamma^*(\alpha_{[\tau]}(F))$ is a $\tau$-sheaf if and only if it is the $\tau$-sheafification of $\gamma^*(F)$. 
\end{remark}

\begin{example} \label{example-sheaf-condition}
Let $\C$ be a simplicial category with only two objects $x$ and $y$ and non-identity morphisms only from $x$ to $y$. Suppose that $\tau$ is the Grothendieck topology on $\C_0$ which 
is given by the sieve generated by \emph{all} the morphisms $\{f_{\alpha} \colon x \to y\}$. If the simplicial set $\mathrm{map}_{\C}(x, y)$ is connected, then $\mathrm{Ho}(\C)$ is equivalent to $[1] = \{0 < 1\}$, regarded as a category. In this case, a presheaf $F \colon \mathrm{Ho}(\C)^{\op} \to \mathrm{Set}$ 
is a $[\tau]$-sheaf if and only if the restriction map $F(y) \to F(x)$ is an isomorphism. On the other hand, a constant presheaf $F \colon \C_0^{\op} \to \mathrm{Set}$ is not a $\tau$-sheaf in general. 
\end{example}

\section{Motivic spaces} \label{motivic-spaces}

\subsection{The enriched category $\SSm_S$}\label{motivicspaces1}

Let $S$ be a noetherian scheme of finite Krull dimension.
Let $\Sm_S$ be the category of smooth schemes of finite type over $S$.
The category $\Sm_S$ is essentially small and we implicitly fix a small skeleton.

Consider the cosimplicial object $\ADelta^\upperminus\colon\Delta\to\Sm_S$ defined by
\[
\textstyle
	\ADelta^n=\Spec S[X_0,\ldots,X_n]/{\left(1-\sum X_i\right)}
\]
and the usual coface and codegeneracy maps.
This defines the structure of a simplicial category $\SSm_S$ on $\Sm_S$ by \cite[Lemma~1.1]{HS} where
\[
\mathrm{map}_{\Sm_S}(A,B)_n=\hom_{\Sm_S}(A\times \ADelta^n, B).
\]
It was observed in \cite[Lemma~1.4]{HS} that the unit of the Quillen adjunction
\begin{equation}\label{adjunction}
 \Hs: \sPre(\Sm_S) \rightleftarrows \SPre(\SSm_S): \Uc 
\end{equation}
is given by the $\Sing$-construction of \cite{MV99}, and we have 
\[
\Uc\Hs(F)(U)_n=\Sing(F)(U)_n=F(U\times\ADelta^n)_n 
\]
for $F\in \sPre(\Sm_S)$ and $U\in\Sm_S$.

We emphasize that every enriched simplicial presheaf $F\in\SPre(\SSm_S)$ is \mbox{$\A^1$-(homotopy) invariant}, i.e.,
\[
F(U)\xrightarrow{\pr^*} F(U\times\A^1)
\]
is a weak equivalence of simplicial sets for every $U\in\SSm_S$ \cite[Lemma~2.8]{HS}.

Consider the set of morphisms $\{U\times\A^1\xrightarrow{\pr} U\mid U\in\Sm_S\}$ and let $\sPre(\Sm_S)_{\A^1}$ be the left Bousfield localization of the projective model category $\sPre(\Sm_S)$ of Theorem~\ref{proj-simp} at this set of 
morphisms. An 
object $F \in \sPre(\Sm_S)_{\A^1}$ is fibrant if and only if it is objectwise fibrant and $\A^1$-invariant. 

\begin{proposition}\label{A1project}
There is a Quillen equivalence
\[
\Hs:\sPre(\Sm_S)_{\A^1}\rightleftarrows \SPre(\SSm_S):\Uc.
\]
The right adjoint $\Uc$ detects weak equivalences and fibrations.
\end{proposition}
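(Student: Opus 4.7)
My plan is to combine the Quillen adjunction $(\Hs,\Uc)$ on projective model structures, the universal property of left Bousfield localization, and the $\A^1$-invariance \cite[Lemma~2.8]{HS} of enriched simplicial presheaves in three steps.

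First I would verify that the Quillen adjunction descends to
\[
\Hs\colon\sPre(\Sm_S)_{\A^1}\rightleftarrows\SPre(\SSm_S)\colon\Uc
\]
by the standard universal property: it suffices that $\Uc$ maps projectively fibrant objects to $\A^1$-locally fibrant simplicial presheaves. For any $F\in\SPre(\SSm_S)$ we have $F(U)\xrightarrow{\sim} F(U\times\A^1)$ by \cite[Lemma~2.8]{HS}, so $\Uc(F)$ is $\A^1$-invariant; combined with objectwise fibrancy this yields $\A^1$-local fibrancy. From this, the detection of fibrations is immediate: an $\A^1$-local fibration is in particular an objectwise fibration, and $\Uc$ merely forgets the enrichment, so $\Uc(f)$ being an $\A^1$-local fibration means $f$ is an objectwise fibration. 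For detection of weak equivalences, I pass to objectwise fibrant replacements $f^{\mathrm{fib}}\colon X^{\mathrm{fib}}\to Y^{\mathrm{fib}}$; both $\Uc(X^{\mathrm{fib}})$ and $\Uc(Y^{\mathrm{fib}})$ are then $\A^1$-locally fibrant, so $\Uc(f^{\mathrm{fib}})$ being an $\A^1$-local equivalence forces it to be an objectwise weak equivalence, and two-out-of-three gives the same for $f$.

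To upgrade this to a Quillen equivalence I would invoke the criterion that, since $\Uc$ reflects weak equivalences between fibrant objects (just established), it is enough to check the derived unit $F\to\Uc((\Hs F)^{\mathrm{fib}})$ is an $\A^1$-local equivalence for every cofibrant $F$. Since $\Uc$ preserves objectwise weak equivalences, the projectively fibrant replacement $\Hs(F)\to(\Hs F)^{\mathrm{fib}}$ induces an objectwise equivalence $\Sing(F)=\Uc\Hs(F)\to\Uc((\Hs F)^{\mathrm{fib}})$, so by two-out-of-three the problem reduces to showing $F\to\Sing(F)$ is an $\A^1$-local equivalence. Both $\id$ and $\Sing$ are colimit-preserving endofunctors of $\sPre(\Sm_S)$ (the latter because $\Sing(G)(U)_n=G(U\times\ADelta^n)_n$), so after writing a projectively cofibrant $F$ as a cell complex on the generating cofibrations $y(U)\otimes\partial\Delta^n\hookrightarrow y(U)\otimes\Delta^n$, a standard induction using left properness and the closure of $\A^1$-local equivalences under pushouts along cofibrations and transfinite compositions reduces the question to the representable case $F=y(U)$.

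The main obstacle is therefore showing that $y(U)\to\Sing(y(U))$ is an $\A^1$-local equivalence. The target $\Sing(y(U))(V)_n=\hom_{\Sm_S}(V\times\ADelta^n,U)$ is the Suslin complex of $U$; it is $\A^1$-invariant by \cite[Lemma~2.8]{HS} applied to the enriched representable $\mathrm{map}_{\SSm_S}(-,U)$, hence $\A^1$-locally fibrant after objectwise fibrant replacement. To check that the unit map itself is an $\A^1$-local equivalence, I would use that the cosimplicial scheme $\ADelta^\bullet$ is pointwise $\A^1$-contractible to $\ADelta^0$ via the scalar homotopy $h(t,x)=tx$, and that these contractions assemble into a simplicial homotopy exhibiting the inclusion of constant simplices $y(U)\hookrightarrow\Sing(y(U))$ as an equivalence in $\sPre(\Sm_S)_{\A^1}$. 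This is the classical Suslin--Voevodsky comparison in the motivic setting, and it completes the verification of the derived unit condition and hence of the Quillen equivalence.
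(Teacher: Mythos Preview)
Your proof is correct and follows essentially the same strategy as the paper: verify the Quillen adjunction by checking that $\Uc$ sends fibrant objects to $\A^1$-local fibrant objects (the paper cites \cite[Cor.~A.3.7.2]{htt} for this criterion), and then establish the Quillen equivalence via the derived unit $F\to\Sing(F)$ being an $\A^1$-local equivalence. The only notable difference is that the paper dispatches this last step in one line by citing \cite[Cor.~2.3.8]{MV99}, whereas you reconstruct the argument by hand; your cell-induction reduction to representables is actually unnecessary, since the $\A^1$-contraction of $\ADelta^\bullet$ (your formula $h(t,x)=tx$ should be corrected to something like $(t,x_0,\dots,x_n)\mapsto(1-t+tx_0,tx_1,\dots,tx_n)$ to stay on the hyperplane $\sum x_i=1$) yields an explicit $\A^1$-homotopy between $\id_{\Sing(F)}$ and the projection $\Sing(F)\to F\hookrightarrow\Sing(F)$ for \emph{every} $F$, not just representables.
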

\begin{proof}
The adjunction $(\Hs, \Uc)$ is a simplicial adjunction by construction and the respective model structures in the Proposition are both simplicial and left proper.
To see that it defines a Quillen adjunction, it suffices to show that $\Hs$ preserves cofibrations and $\Uc$ preserves fibrant objects (see \cite[Cor.~A.3.7.2]{htt}). The first is clear as left Bousfield localizations do not change the cofibrations and $(\Hs, \Uc)$ is a Quillen adjunction between the projective model structures. The right adjoint $\Uc$ sends fibrant objects to $\A^1$-invariant fibrant objects. Hence there is an induced Quillen adjunction as claimed. 

This Quillen adjunction is also a Quillen equivalence because the canonical map $A\to \Sing(A)$ is an $\A^1$-equivalence as implied by \cite[Cor.~2.3.8]{MV99}.
\end{proof}

\begin{remark}
As a consequence of the last proposition, there is an equivalence between the homotopy category of $\SPre(\SSm_S)$ and the full subcategory of the homotopy category of $\sPre(\Sm_S)$ consisting of objects of the form $\Sing(X)$ for some $X \in \sPre(\Sm_S)$.
In particular, this means that a natural transformation between two such simplicial presheaves is equivalent to a simplicially enriched one, uniquely up to homotopy.
This observation extends to show also a weak equivalence (i.e., DK-equivalence) between the associated simplicial categories of fibrant-cofibrant objects.
\end{remark}

\begin{remark}
Since $\SPre(\SSm_S)$ is a model topos so is $\sPre(\Sm_S)_{\A^1}$, too.
However, the left Bousfield localization 
\begin{equation}\label{A1localization}
 \id_{\A^1} \colon \sPre(\Sm_S) \to \sPre(\Sm_S)_{\A^1}
\end{equation}
is not homotopy left exact.
This can be seen as a consequence of the fact that the motivic homotopy theory is not a model topos (see Proposition~\ref{prop:motivicisnotatopos} below).
Note that a fibrant replacement functor for $\sPre(\Sm_S)_{\A^1}$ is given by the $\Sing$-functor (post-composed with an objectwise fibrant replacement functor). 
\end{remark}

\subsection{Models for the motivic homotopy theory}\label{motivicspaces2} 

The motivic homotopy category $\HH$ was constructed by Morel and Voevodsky in \cite{MV99}.
Although they worked with an injective local model structure on the category of simplicial sheaves on $\Sm_S$, the motivic homotopy category $\HH$ can be equivalently 
established by performing two left Bousfield localizations on the projective model category $\sPre(\Sm_S)$ of Theorem~\ref{proj-simp} (see also \cite{Blander}).
The first localization of $\sPre(\Sm_S)$ yields the model category $\sPre(\Sm_S)_{\A^1}$ which was already considered in Proposition~\ref{A1project}.
In order to describe the second localization, we recall the definition of a Nisnevich distinguished square.

\begin{definition}
A \emph{Nisnevich distinguished square} is a pullback diagram in $\Sm_S$ 
\begin{equation*}
\alpha = \left(
\vcenter{\xymatrix{
W \ar@{->}[d]\ar@{->}[r]  & Y\ar@{->}[d]^p\\
U\ar@{->}[r]^i & X
}}\right)
\end{equation*}
such that $i$ is an open immersion, $p$ is an \'etale morphism and the induced morphism $p^{-1}((X\setminus i(U))_{\rm red})\xrightarrow{\cong} (X\setminus i(U))_{\rm red}$ 
is an isomorphism.
\end{definition}

For each Nisnevich distinguished square $\alpha$ as above, let $P(\alpha) \to X$ in $\sPre(\Sm_S)$ be the morphism from the pushout $P$ in $\sPre(\Sm_S)$ of the upper part 
$U\leftarrow W\to Y$ of the square to its lower right corner $X$. Here all schemes are identified with the associated representable presheaves.
Consider the set of morphisms 
\begin{equation}\label{thesetNis}
\Nis = \{ (P(\alpha) \to X)\}_{\alpha}
\end{equation}
for each Nisnevich distinguished square $\alpha$ 

Let $\sPre(\Sm_S)_{\A^1,\Nis}$ denote the left Bousfield localization of the model category $\sPre(\Sm_S)_{\A^1}$ at the set $\Nis$.
Following Blander~\cite{Blander}, the model category $\sPre(\Sm_S)_{\A^1,\Nis}$ is Quillen equivalent to the model category of motivic spaces as defined by Morel--Voevodsky in~\cite{MV99}.
We will refer to $\sPre(\Sm_S)_{\A^1,\Nis}$ as the \emph{motivic model category}.
Accordingly, the meaning of  \emph{motivic fibrant} objects, etc., will refer to this particular choice of model category for motivic homotopy theory.
Furthermore, $L_{\mot}$ will denote a fibrant replacement functor for this model structure.

\begin{proposition}\label{motivichashd1}
The motivic model category satisfies the homotopical descent condition \eqref{HD1}.
\end{proposition}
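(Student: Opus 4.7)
The strategy is to reduce the claim to Theorem~\ref{pdescent}, which characterizes \eqref{HD1} for a presentable model category as the existence of a right proper small presentation. Since $\sPre(\Sm_S)_{\A^1,\Nis}$ is combinatorial, and therefore presentable by Dugger's Theorem~\ref{dugger}, the problem reduces to exhibiting a right proper model category which is Quillen equivalent to $\sPre(\Sm_S)_{\A^1,\Nis}$.

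Such a Quillen equivalent model is provided by the \emph{injective} motivic model structure on $\sPre(\Sm_S)$ used by Morel--Voevodsky in \cite{MV99}: one starts from the injective local Nisnevich model structure (cofibrations are the monomorphisms) and performs the left Bousfield localization at the set of projections $U \times \A^1 \to U$. Its weak equivalences agree with those of $\sPre(\Sm_S)_{\A^1,\Nis}$, so the identity functor yields a Quillen equivalence between the two motivic model structures. The key input is then the right properness of this injective motivic model structure, established in \cite[Thm.~2.3.2]{MV99}; the argument relies on the fact that a motivic fibration is in particular a fibration in the injective Nisnevich local model structure, which is itself right proper, together with the compatibility of $\A^1$-localization with Nisnevich-local fibrations encoded by the $\Sing$-construction.

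Combining these ingredients, the injective motivic model structure supplies the required right proper small presentation of $\sPre(\Sm_S)_{\A^1,\Nis}$, and Theorem~\ref{pdescent} then gives \eqref{HD1}.

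The main subtlety in this argument is that right properness is not invariant under Quillen equivalence, so one cannot simply transfer the property from the Nisnevich or $\A^1$-localization alone; it has to be verified for the combined motivic localization, where the interaction of the two localization processes is delicate. This is precisely where the nontrivial content of \cite[Thm.~2.3.2]{MV99} enters. It is also worth noting that this approach only yields \eqref{HD1}: condition \eqref{HD2} genuinely fails in $\sPre(\Sm_S)_{\A^1,\Nis}$ (see Proposition~\ref{prop:motivicisnotatopos}), so any direct proof of \eqref{HD1} must exploit a structural feature, such as right properness, which does not imply full homotopical descent.
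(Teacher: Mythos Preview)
Your strategy is the same as the paper's: reduce to Theorem~\ref{pdescent} via right properness. The paper is simply more direct, citing \cite[Lemma~3.4]{Blander} for right properness of the projective motivic model category $\sPre(\Sm_S)_{\A^1,\Nis}$ itself; since this model category is already of the form $\SPre(\C)_S$ (with $\C=\Sm_S$ discrete), it is its own right proper small presentation, and Theorem~\ref{pdescent} applies immediately.

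Your detour through the injective model carries a small technical wrinkle: the injective motivic model structure is not literally a ``small presentation'' in the paper's sense, because its cofibrations are the monomorphisms rather than the projective cofibrations, so it is not of the form $\SPre(\C)_S$. The fix is already in the paper (see the paragraph before Proposition~\ref{base-change}): right properness depends only on the underlying category with weak equivalences. Since the injective and projective motivic model structures share these, right properness of the injective model from \cite{MV99} transfers to the projective one, which then serves as the required right proper small presentation. With that adjustment your argument is complete, but citing Blander directly, as the paper does, avoids the detour entirely.
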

\begin{proof}
This model category is right proper by \cite[Lemma~3.4]{Blander}.
Then the result follows from Theorem~\ref{pdescent}.
\end{proof}

Instead of the two-step left Bousfield localization
\[
\sPre(\Sm_S)\to \sPre(\Sm_S)_{\A^1}\to \sPre(\Sm_S)_{\A^1,\Nis},
\]
we may likewise first localize the objectwise projective model category $\sPre(\Sm_S)$ at the set $\Nis$ from \eqref{thesetNis} to obtain a model category $\sPre(\Sm_S)_{\Nis}$ and afterwards invert the $\A^1$-equivalences.
We will refer to $\sPre(\Sm_S)_{\Nis}$ as the \emph{Nisnevich local model category}.
The functor $L_{\Nis}$ will denote a fibrant replacement functor for this model structure.

We record the following well known theorem whose proof follows from \cite[Thm.~2.2]{Voevodsky2} and \cite[Prop.~2.17]{Voevodsky2} together with \cite[Lemma~4.3]{Blander}.

\begin{theorem} \label{jardinevoevodsky}
The Nisnevich local model category $\sPre(\Sm_S)_{\Nis}$ is the same as the local model structure $\sPre(\C,\tau)$ (see Theorem~\ref{toen-vezzosi}) applied to the Nisnevich site $\Sm_S$.
The left Bousfield localization $\sPre(\Sm_S)\to \sPre(\Sm_S)_{\Nis}$ is homotopy left exact.
\end{theorem}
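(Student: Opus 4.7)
The plan is to identify the classes of fibrant objects in both model categories and show they coincide, using Voevodsky's theorem on cd-structures, and then obtain the homotopy left exactness as an instance of Theorem~\ref{local-topos}.

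Both $\sPre(\Sm_S)_{\Nis}$ and the local model category $\sPre(\Sm_S, \Nis)$ obtained from Theorem~\ref{local-simp} applied to $\Sm_S$ regarded as a discrete simplicial category, equipped with the Nisnevich topology, are left Bousfield localizations of the same projective model structure on $\sPre(\Sm_S)$. In particular, they share the class of projective cofibrations. By the general theory of left Bousfield localizations at a set of morphisms in a left proper combinatorial model category, such a localization is determined by its class of (objectwise fibrant) local objects, so it suffices to show that these classes coincide.

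First I would observe that an objectwise fibrant presheaf $F$ is fibrant in $\sPre(\Sm_S)_{\Nis}$ if and only if $F$ sends every Nisnevich distinguished square $\alpha$ to a homotopy pullback square of simplicial sets; this unwinds directly from the definition of $P(\alpha)$ as the pushout of the upper part of $\alpha$ and the standard characterization of locality with respect to the generating set $\Nis$. On the other hand, $F$ is fibrant in $\sPre(\Sm_S, \Nis)$ if and only if $F$ satisfies Nisnevich hyperdescent, i.e., $F$ takes every Nisnevich hypercover of $X$ to a homotopy limit diagram computing $F(X)$; this is the Dugger--Hollander--Isaksen characterization \cite{DHI}. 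The equivalence of these two descent conditions is the content of Voevodsky's theorem on cd-structures \cite[Thm.~2.2, Prop.~2.17]{Voevodsky2}: since the Nisnevich cd-structure is complete, regular, and bounded, excision with respect to the generating distinguished squares coincides with full Nisnevich hyperdescent. Combined with \cite[Lemma~4.3]{Blander}, which records the same identification at the level of generating trivial cofibrations, this shows that the two model structures on $\sPre(\Sm_S)$ have the same fibrant objects and hence agree.

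For the second assertion, I would simply invoke Theorem~\ref{local-topos} applied to the discrete simplicial category $\Sm_S$ with the Nisnevich topology: that theorem uses the criterion of Proposition~\ref{criterion} to show precisely that the class of local weak equivalences is closed under homotopy pullbacks in $\sPre(\Sm_S)$, which is exactly the statement that the left Quillen functor $\sPre(\Sm_S) \to \sPre(\Sm_S, \Nis)$ is homotopy left exact. The identification established in the first part then transfers this to $\sPre(\Sm_S) \to \sPre(\Sm_S)_{\Nis}$.

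The main obstacle is the first step — the comparison between excision with respect to distinguished squares and full Nisnevich hyperdescent. This is not formal and uses the boundedness of the Nisnevich cd-structure in an essential way; however, it is exactly what the cited results of \cite{Voevodsky2} provide, so once one grants them the remainder of the argument is essentially a bookkeeping exercise about Bousfield localizations.
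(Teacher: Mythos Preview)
Your proposal is correct and follows essentially the same route as the paper: the paper records this theorem as well known and simply points to \cite[Thm.~2.2, Prop.~2.17]{Voevodsky2} together with \cite[Lemma~4.3]{Blander}, which are precisely the inputs you use to identify the two classes of fibrant objects. Your appeal to Theorem~\ref{local-topos} for the homotopy left exactness is also in line with the paper's framework, since that theorem establishes exactly that the local model structure on $\sPre(\C_0,\tau)$ arises via a homotopy left exact localization.
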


Yet another model for the motivic homotopy theory was constructed in \cite[Thm.~2.4]{HS}.
This is defined by a model structure on the category $\SPre(\SSm_S)$ which is Quillen equivalent to $\sPre(\Sm_S)_{\A^1, \Nis}$.
More precisely, it is the model structure which is transported from $\sPre(\Sm_S)_{\A^1, \Nis}$ along the adjunction $(\Hs, \Uc)$.
In this model category, which we denote by $\SPre(\SSm_S)_{\Uc \mathrm{mot}}$, a morphism is a \emph{weak equivalence} (respectively, \emph{fibration}) if it is a weak equivalence (respectively, fibration) in $\sPre(\Sm_S)_{\A^1,\Nis}$ after applying the functor $\Uc$.
There is a Quillen equivalence 
\[
\Hs \colon \sPre(\Sm_S)_{\A^1, \Nis} \rightleftarrows \SPre(\SSm_S)_{\Uc \mathrm{mot}} \colon \Uc.
\]
This model category should not be confused with the $\Uc$-local model category $\SPre(\SSm_S)_{\mathrm{\mathcal{U}loc}}$ from Theorem~\ref{local-simp}.

\begin{proposition}\label{prop:equivalenttothemotivic}
The model category $\SPre(\SSm_S)_{\Uc \mathrm{mot}}$ is the same as the left Bousfield localization of the model category $\SPre(\SSm_S)$ from Theorem~\ref{proj-simp} at the set $\Hs(\Nis)$. 
\end{proposition}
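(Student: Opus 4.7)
The plan is to identify the two model structures on $\SPre(\SSm_S)$ by showing that both are left Bousfield localizations of the projective model category and that they have the same fibrant objects; a model structure on $\SPre(\SSm_S)$ with the same cofibrations as a given one is determined by its fibrant objects. I denote $\N := \SPre(\SSm_S)_{\Uc \mot}$ and write $\N'$ for the left Bousfield localization of $\SPre(\SSm_S)$ at $\Hs(\Nis)$.

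First I would verify that $\N$ is a left Bousfield localization of $\SPre(\SSm_S)$ with the same cofibrations as $\N'$. Since left Bousfield localization does not change the class of trivial fibrations, the trivial fibrations in $\sPre(\Sm_S)_{\A^1,\Nis}$ coincide with the projective ones in $\sPre(\Sm_S)$; and because $\Uc$ detects projective trivial fibrations, being the identity on evaluation at objects of $\Sm_S = \mathrm{Ob}(\SSm_S)$, the trivial fibrations in $\N$ are precisely the projective trivial fibrations of $\SPre(\SSm_S)$. By the lifting property, the cofibrations of $\N$ then agree with the projective cofibrations. Moreover, $\Uc$ sends objectwise weak equivalences to objectwise weak equivalences, and therefore to weak equivalences in $\sPre(\Sm_S)_{\A^1,\Nis}$, so projective weak equivalences are already weak equivalences in $\N$; this confirms that $\N$ is a left Bousfield localization of $\SPre(\SSm_S)$.

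Next I would match the fibrant objects. An object $F \in \SPre(\SSm_S)$ is fibrant in $\N$ iff $\Uc(F)$ is motivic fibrant, i.e., projectively fibrant, $\A^1$-invariant, and Nisnevich local. The $\A^1$-invariance of $\Uc(F)$ is automatic for every $F \in \SPre(\SSm_S)$ (see Subsection~\ref{motivicspaces1} and Proposition~\ref{A1project}), and projective fibrancy of $\Uc(F)$ is equivalent to that of $F$. The Nisnevich locality of $\Uc(F)$ says that $\Map(X, \Uc F) \to \Map(P(\alpha), \Uc F)$ is a weak equivalence for every Nisnevich distinguished square $\alpha$; by the simplicial adjunction $(\Hs, \Uc)$, these mapping spaces agree with $\Map(\Hs X, F)$ and $\Map(\Hs P(\alpha), F)$, so the condition translates exactly into $F$ being $\Hs(\Nis)$-local, i.e., fibrant in $\N'$. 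Thus $\N$ and $\N'$ share the same fibrant objects, and since both are left Bousfield localizations of $\SPre(\SSm_S)$ they coincide.

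The only step requiring a small argument beyond pure formalities is the identification of trivial fibrations in the transferred structure $\N$ with the projective trivial fibrations; after that, matching the fibrant objects is a straightforward mapping-space computation via the $(\Hs, \Uc)$ adjunction, and the conclusion is automatic from the theory of left Bousfield localization.
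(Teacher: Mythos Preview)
Your proof is correct and follows essentially the same approach as the paper: both arguments observe that the two model structures share the projective cofibrations and then identify their fibrant objects by translating the motivic fibrancy condition on $\Uc(F)$ into $\Hs(\Nis)$-locality of $F$ via the simplicial adjunction $(\Hs,\Uc)$. The only minor differences are that you spell out the cofibration identification via trivial fibrations more explicitly, while the paper opens by invoking Hirschhorn's localization-transfer theorem to record the resulting Quillen equivalence before carrying out the same fibrant-object comparison.
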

\begin{proof}
By \cite[Thm.~3.3.10.1b]{Hirschhorn} and Proposition~\ref{A1project}, there is a Quillen equivalence
\[
\Hs:\sPre(\Sm_S)_{\A^1,\Nis}\leftrightarrows \SPre(\SSm_S)_{\Hs(\Nis)}:\Uc
\]
where the model category on the right-hand side is the left Bousfield localization in question.
As the cofibrations of the model categories $\SPre(\SSm_S)_{\Uc \mathrm{mot}}$ and $\SPre(\SSm_S)_{\Hs(\Nis)}$ are the same, it suffices to show that they have the same fibrant objects.
An object $F\in \SPre(\SSm_S)_{\Uc \mathrm{mot}}$ is fibrant if and only if $\Uc(F)$ is fibrant in $\sPre(\Sm_S)_{\A^1,\Nis}$.
This is the case if and only if $\Uc(F)$ is fibrant in $\sPre(\Sm_S)_{\A^1}$ and
\[
\mathrm{map}(X, \Uc(F)) \to \mathrm{map}(P(\alpha), \Uc(F))
\]
is a weak equivalence for all $P(\alpha) \to X$ in $\Nis$.
As the adjunction~\eqref{adjunction} is a simplicial adjunction, the latter is equivalent to the requirement that the map
\[
\mathrm{map}(\Hs(X), F) \to \mathrm{map}( \Hs(P(\alpha)), F)
\]
is a weak equivalence for all $P(\alpha) \to X$ in $\Nis$. 
But these are exactly the conditions for $F$ to be a fibrant in $\SPre(\SSm_S)_{\Hs(\Nis)}$.
The result follows. 
\end{proof}

\subsection{The motivic homotopy theory is not a model topos}\label{motivicnotatopos}

In this subsection, we provide some details of an argument showing that the motivic homotopy theory is not a model topos.
This was sketched in \cite[Rem.~3.5]{SO12}.

Recall that a simplicial presheaf is called \emph{$\A^1$-local}, if it is $\A^1$-homotopy invariant after a Nisnevich local fibrant replacement (or, in other words, if its fibrant replacement in $\sPre(\Sm_S)_{\Nis}$ is already motivic fibrant).
This property is clearly invariant under Nisnevich local weak equivalences.

\begin{example}
A discrete simplicial presheaf is Nisnevich local fibrant if and only if it is a sheaf.
Hence, a Nisnevich sheaf (considered as a discrete simplicial presheaf) is $\A^1$-local if and only if it is $\A^1$-invariant.
\end{example}

A Nisnevich sheaf of groups $G$ is called \emph{strongly $\A^1$-invariant}, if its classifying space $BG$ is $\A^1$-local (or, in other words, if the Nisnevich cohomology groups $H^0_{\Nis}(-;G)$ and $H^1_{\Nis}(-;G)$ are $\A^1$-invariant).

\begin{example}
Let $S$ be a regular base scheme.
The Nisnevich sheaf of groups $\GG_m$ is clearly $\A^1$-invariant.
It is also strongly $\A^1$-invariant as regular schemes have an $\A^1$-invariant Picard group $\operatorname{Pic}(-)\cong H^1_{\Nis}(-;\GG_m)$.
\end{example}

For a pointed simplicial presheaf $X$ and an integer $n\geq 0$, let $\tilde\pi_n(X)$ be the Nisnevich sheafification of the presheaf $\pi_n(X)$ given by
\[
[S^n\wedge (-)_{+},X]
\]
where the brackets denote hom-sets in the pointed homotopy category of the projective model structure. 

\smallskip

\noindent \textbf{Assumption.} We assume for the rest of the subsection that $S$ is the spectrum of a perfect infinite field.

\begin{theorem}[{Morel \cite{Morelbook}}]\label{theorem:morelstrictly}
Let $X$ be a pointed simplicial presheaf.
Then the sheaf 
\[
\tilde\pi_1(L_{\mot} X)=a_{\Nis}[(-)_+,\Omega L_{\mot} X]
\]
is strongly $\A^1$-invariant.
(Here $a_{\Nis}$ denotes the Nisnevich sheafification functor.)
\end{theorem}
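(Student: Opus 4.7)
The statement is due to Morel and a complete proof occupies a substantial portion of \cite{Morelbook}; what follows is a conceptual plan indicating where the various ingredients enter.

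The plan is to unpack strong $\A^1$-invariance into its two defining conditions and treat them separately. Writing $G := \tilde\pi_1(L_{\mot} X) \cong \tilde\pi_0(\Omega L_{\mot} X)$, the first condition is that $G$ itself is an $\A^1$-invariant Nisnevich sheaf of groups. This follows quickly: since $L_{\mot} X$ is motivically fibrant, so is its loop space $\Omega L_{\mot} X$, and the sheaves of homotopy groups of any motivically fibrant space are $\A^1$-invariant essentially by construction (combine $\A^1$-invariance of the space with Nisnevich hyperdescent). The group structure on $G$ comes from the H-space structure on the loop space.

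The harder condition is $\A^1$-invariance of $H^1_{\Nis}(-;G)$. The strategy is to use the classifying-space identification $H^1_{\Nis}(U;G) \cong [U_+, BG]_{\sPre(\Sm_S)_{\Nis}}$ and reduce to showing that $BG$ is already motivically local, i.e., that the canonical map $BG \to L_{\mot}(BG)$ is a Nisnevich local weak equivalence. One realises $BG$ as a first Postnikov stage of the $\A^1$-connected cover of $L_{\mot} X$ and argues inductively that Postnikov stages of motivically fibrant spaces remain motivically fibrant. This is encoded in Morel's unstable $\A^1$-connectivity theorem, which says that $\A^1$-localisation does not lower sheaf-theoretic connectivity, together with the stability of the class of strongly $\A^1$-invariant sheaves under the relevant sheafification and loop operations.

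The main obstacle, and the reason for the hypothesis that $S$ is the spectrum of a perfect infinite field, is precisely the inductive step producing strong (as opposed to just plain) $\A^1$-invariance. This step relies on Gabber's geometric presentation lemma for smooth schemes over an infinite field (to analyse Nisnevich cohomology after $\A^1$-contraction), and on Morel's theory of contracted sheaves and their strictification, where perfection of the base field is used to control residue-field extensions and to set up the transfers in Milnor--Witt $K$-theory that govern strongly $\A^1$-invariant sheaves of groups. Any genuine proof at this level of generality has to traverse this machinery; I would therefore not attempt to reproduce it here and instead cite Morel's monograph.
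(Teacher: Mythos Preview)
The paper's own ``proof'' of this statement is a single sentence: \emph{See \cite[Thm.~1.9]{Morelbook}.} There is no argument given in the paper beyond the citation. Your proposal ultimately arrives at the same place---you explicitly say you would cite Morel's monograph rather than reproduce the machinery---so in that sense your approach and the paper's agree.

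Where you differ is that you supply a conceptual outline of the ingredients (the easy $\A^1$-invariance of $G$ from motivic fibrancy, the identification $H^1_{\Nis}(-;G)\cong[-_+,BG]$, the role of Postnikov truncation and the unstable $\A^1$-connectivity theorem, and the geometric input from Gabber's presentation lemma over an infinite field). This is strictly more than the paper offers, and the sketch is broadly faithful to the architecture of Morel's argument. One small caution: the step ``realise $BG$ as a first Postnikov stage of the $\A^1$-connected cover of $L_{\mot}X$ and argue inductively that Postnikov stages of motivically fibrant spaces remain motivically fibrant'' is precisely the nontrivial content---it is not a formal consequence of the connectivity theorem alone, and making it precise is essentially equivalent to what you are trying to prove. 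So your outline correctly locates the difficulty but does not dissolve it; since you end by deferring to \cite{Morelbook}, this is fine, but be aware that the inductive step you name is the theorem, not a route to it.
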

\begin{proof}
See \cite[Thm.~1.9]{Morelbook}.
\end{proof}

The sheaf of groups $\GG_m$ is $\A^1$-invariant.
Hence, so is the free abelian presheaf of groups $\ZZ[\GG_m]$.
Consider the basepoint $1\colon *\to \GG_m$ and the Nisnevich sheaf of abelian groups $\ZZ(\GG_m)=a_{\Nis}(\ZZ[\GG_m]/\ZZ[*])$.

\begin{proposition}[{Choudhury \cite{Choudhury}}] \label{theorem:choudhury}
The Nisnevich sheaf of groups $\ZZ(\GG_m)$ is $\A^1$-invariant 
but not strongly $\A^1$-invariant.
\end{proposition}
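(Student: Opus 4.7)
The plan is to handle the two assertions separately. The $\A^1$-invariance of $\ZZ(\GG_m)$ is relatively formal, while the failure of strong $\A^1$-invariance requires a contradiction argument using Morel's identification of $\tilde\pi_1^{\A^1}(\Sigma \GG_m)$ with Milnor--Witt $K$-theory.

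For the $\A^1$-invariance, I would start from the manifest $\A^1$-invariance of $\GG_m$: for smooth $U$ over the field $S$, one has $\mathcal{O}(U\times \A^1)^{\times}=\mathcal{O}(U)^{\times}$. The functor that sends a pointed presheaf $F$ to $\ZZ[F]/\ZZ[*]$ is computed sectionwise, hence preserves the property of being $\A^1$-invariant. Thus the presheaf $\ZZ[\GG_m]/\ZZ[*]$ is $\A^1$-invariant. Finally, Nisnevich sheafification preserves $\A^1$-invariance on $\Sm_S$: the Nisnevich stalks are henselizations of local rings of smooth $S$-schemes at points, and $\A^1$-invariance can be detected on stalks because, for any Nisnevich point of $U\times \A^1$, the corresponding henselian local ring is canonically that of a Nisnevich point of $U$ (in the language of earlier subsections, the adjunction $(\epsilon_!,\epsilon^*)$ interacts well with $\A^1$-factors). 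This yields that $\ZZ(\GG_m)=a_{\Nis}(\ZZ[\GG_m]/\ZZ[*])$ is $\A^1$-invariant.

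For the failure of strong $\A^1$-invariance, I would argue by contradiction. Suppose $\ZZ(\GG_m)$ is strongly $\A^1$-invariant. By Theorem~\ref{theorem:morelstrictly}, Morel's sheaf $\tilde\pi_1(L_{\mot}(\Sigma\GG_m))\cong K^{MW}_1$ is strongly $\A^1$-invariant and is universal among strongly $\A^1$-invariant sheaves of groups equipped with a map from $\GG_m$. There is a canonical map of pointed $\A^1$-invariant presheaves $\GG_m\to \ZZ(\GG_m)$ (given by viewing elements as generators of the free abelian group), so by Morel's universal property the hypothetical strong $\A^1$-invariance of $\ZZ(\GG_m)$ would produce a factorization $\GG_m\to K^{MW}_1 \xrightarrow{\phi} \ZZ(\GG_m)$ compatible with the Hurewicz map. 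The aim is now to show that no such $\phi$ can exist.

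The main obstacle, and the technical heart of the argument, is exhibiting this incompatibility at the level of sections over a perfect infinite field $F$. One has $\ZZ(\GG_m)(F)=\ZZ[F^{\times}]/\ZZ[1]$, which is the free abelian group on $F^{\times}\setminus\{1\}$ and in particular has no torsion and no nontrivial multiplicative relations. By contrast, the known presentation of $K^{MW}_1(F)$ involves Witt-theoretic contributions encoded by the Hopf element $\eta$, together with the ``twisted'' additivity $[uv]=[u]+[v]+\eta[u][v]$, and fits into an extension of $K^M_1(F)=F^{\times}$ by a quotient of the fundamental ideal in the Witt ring. A comparison of these two presentations shows that the hypothetical $\phi$ would either have to carry nontrivial $\eta$-torsion elements in $K^{MW}_1(F)$ to zero in $\ZZ(\GG_m)(F)$ (contradicting compatibility with the generating map $\GG_m\to K^{MW}_1$ under the abelianization/universal property) or else force $\ZZ(\GG_m)(F)$ to satisfy nontrivial relations coming from $\eta$, which it does not by freeness. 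Either conclusion contradicts the hypothesis, so $\ZZ(\GG_m)$ is not strongly $\A^1$-invariant.
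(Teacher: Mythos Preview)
The paper's own proof is a one-line citation, ``See \cite[Lemma~4.6]{Choudhury}'', so you are attempting something the authors do not: an actual argument. Unfortunately both halves of your proposal contain genuine gaps.

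\medskip

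\textbf{$\A^1$-invariance.} Your passage from the $\A^1$-invariant presheaf $\ZZ[\GG_m]/\ZZ[*]$ to the $\A^1$-invariance of its Nisnevich sheafification is not justified. The specific claim you make is false: a Nisnevich point of $U\times\A^1$ does \emph{not} have the same henselian local ring as a Nisnevich point of $U$. For instance, if $U=\Spec k$, the henselization at the origin of $\A^1_k$ is the henselization of $k[t]_{(t)}$, which is not a stalk of any sheaf on $\Spec k$. More fundamentally, $\A^1$-invariance is a statement about the restriction map $F(U)\to F(U\times\A^1)$ for every $U$, and this is not a condition one can detect stalkwise. Nisnevich sheafification does not preserve $\A^1$-invariance in general, and Choudhury's argument for this specific sheaf uses additional structure (essentially that Nisnevich covers can be refined in a way compatible with the $\A^1$-factor, together with the explicit form of the presheaf). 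Your aside about ``the adjunction $(\epsilon_!,\epsilon^*)$ interacts well with $\A^1$-factors'' does not supply this.

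\medskip

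\textbf{Failure of strong $\A^1$-invariance.} The strategy---assume $\ZZ(\GG_m)$ is strongly $\A^1$-invariant, obtain a factorization $\GG_m\to K^{MW}_1\xrightarrow{\phi}\ZZ(\GG_m)$ via Morel's computation of $\tilde\pi_1^{\A^1}(\Sigma\GG_m)$, then derive a contradiction---is reasonable, but you do not carry it out. The final paragraph asserts that ``a comparison of these two presentations shows'' a contradiction, but no comparison is made. Indeed, the most obvious relation in $K^{MW}_1$, namely $[u^2]=2[u]+\eta[u]^2$, is \emph{consistent} with the map: applying $\phi$ gives $(u^2)=2(u)+\big((u^2)-2(u)\big)$, which is a tautology. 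You would need to exhibit a specific relation in $K^{MW}_1(F)$ whose image under $\phi$ fails in the free abelian group $\ZZ[F^\times]/\ZZ[1]$, and you have not done so; it is not clear this route succeeds without substantial further work. Choudhury's published argument proceeds differently, via an explicit analysis that is not reproduced in the paper under review.
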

\begin{proof}
See \cite[Lemma~4.6]{Choudhury}.
\end{proof}

Combining these results we can now conclude that the motivic homotopy theory cannot be a model topos. 

\begin{proposition}\label{prop:motivicisnotatopos}
The motivic model category is not a model topos. 
\end{proposition}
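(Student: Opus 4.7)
The plan is to assume for contradiction that $\sPre(\Sm_S)_{\A^1,\Nis}$ is a model topos and then derive a contradiction using the sheaf $G = \ZZ(\GG_m)$ of Proposition~\ref{theorem:choudhury}, combined with Morel's strong $\A^1$-invariance theorem (Theorem~\ref{theorem:morelstrictly}).

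Since $G$ is an $\A^1$-invariant Nisnevich sheaf of abelian groups, it is already motivic fibrant as a discrete simplicial presheaf, and in particular it is a commutative group object in the motivic homotopy category. Form the standard bar simplicial object $B_\bullet G\colon [n]\mapsto G^n$ in $\sPre(\Sm_S)$ and let $B_{\mot}G := L_{\mot}|B_\bullet G|$. Because $B_0 G = *$, the Segal maps for $B_\bullet G$ are literally identities, and together with the group structure on $G$ this makes $B_\bullet G$ a groupoid object in the motivic homotopy category.

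Under the assumption that the motivic model category is a model topos, Theorem~\ref{Giraud} yields homotopical descent, in particular \eqref{HD2}, and hence effectivity of groupoid objects as in Example~\ref{effective groupoids} (see also \cite[Sect.~6.1.2]{htt}). Thus one obtains a weak equivalence $G \simeq \Omega B_{\mot}G$ in the motivic homotopy category. Taking $\tilde\pi_0$ yields an isomorphism of Nisnevich sheaves
\[
\tilde\pi_1(B_{\mot}G) \;=\; \tilde\pi_0(\Omega B_{\mot}G) \;=\; \tilde\pi_0(G) \;=\; G,
\]
where the last equality uses that $G$ is a discrete sheaf. But by Morel's Theorem~\ref{theorem:morelstrictly}, $\tilde\pi_1(B_{\mot}G)$ must be strongly $\A^1$-invariant, so $\ZZ(\GG_m)$ would be strongly $\A^1$-invariant, contradicting Proposition~\ref{theorem:choudhury}.

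The main obstacle I expect is making the effectivity step fully rigorous at the level of model categories, namely verifying the Reedy (co)fibrancy and pullback hypotheses of Example~\ref{effective groupoids} for a suitable replacement of $B_\bullet G$ so as to conclude $G \simeq \Omega B_{\mot}G$. A clean way to circumvent this is to pass to the associated presentable $\infty$-topos and invoke effectivity of groupoid objects from \cite[Sect.~6.1.2]{htt}.
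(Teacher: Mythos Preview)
Your proposal is correct and follows essentially the same strategy as the paper: assume the motivic model category is a model topos, apply \eqref{HD2} to the bar construction on $G=\ZZ(\GG_m)$ to obtain $G\simeq\Omega L_{\mot}BG$, and then derive a contradiction from Theorem~\ref{theorem:morelstrictly} and Proposition~\ref{theorem:choudhury}.

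The only noteworthy difference is in how the technical hypothesis for \eqref{HD2} is verified. You anticipate difficulties with Reedy (co)fibrancy and suggest passing to the associated $\infty$-topos. The paper instead works directly in the model category: it writes down the explicit morphism of simplicial objects $\mathcal{E}G_\bullet\to\mathcal{B}G_\bullet$ and observes that, since $G$ is a discrete $\A^1$-invariant Nisnevich sheaf, each $G^n$ is already \emph{motivic fibrant}. Hence for every $[n]\to[m]$ the square with corners $\mathcal{E}G_m,\mathcal{E}G_n,\mathcal{B}G_m,\mathcal{B}G_n$ is a strict pullback of motivic fibrant objects along an objectwise fibration, and therefore a motivic homotopy pullback; \eqref{HD2} then applies without any Reedy replacement or $\infty$-categorical detour. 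This is a small but useful simplification over the route you sketch.
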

\begin{proof}
Suppose that the motivic model category $\sPre(\Sm_S)_{\A^1,\Nis}$ is a model topos.
Using \eqref{HD2}, we will show that this implies a weak equivalence $G\simeq \Omega L_{\mot} BG$ for each $\A^1$-invariant Nisnevich sheaf of groups 
$G$ (see Example~\ref{effective groupoids}).
This leads to a contradiction because then we would have isomorphisms of 
sheaves of groups
\[
G\simeq \tilde\pi_0(G)\simeq \tilde \pi_0(\Omega L_{\mot} BG)\simeq \tilde \pi_1(L_{\mot} BG), 
\]
contradicting Theorem~\ref{theorem:morelstrictly} and Proposition~\ref{theorem:choudhury}.\\
Consider the multiplication $m\colon G\times G\to G$ and the simplicial object in $\sPre(\Sm_S)$
\[
\mathcal{B}G_{\bullet}=\left(\xymatrix@C=50pt{\cdots \ar@<-4pt>[r]|-{G\times m_{23}}\ar@<4pt>[r]|-{m_{12}\times G}\ar@<-12pt>[r]|-{pr_{12}}\ar@<12pt>[r]|-{pr_{23}}& G\times G \ar@<-8pt>[r]|-{pr_1}\ar[r]|-m\ar@<8pt>[r]|-{pr_2} & G \ar@<-4pt>[r]\ar@<4pt>[r] & {*}} \right).
\]
This receives a morphism, by projecting away from the first factor in each simplicial degree, from the simplicial object in $\sPre(\Sm_S)$
\[
\mathcal{E}G_{\bullet}=\left(\xymatrix@C=50pt{\cdots \ar@<-4pt>[r]|-{G\times G \times m_{23}}\ar@<4pt>[r]|-{G\times m_{12}\times G}\ar@<-12pt>[r]|-{pr_{012}}\ar@<12pt>[r]|-{m_{01}\times G\times G}& G\times G\times G \ar@<-8pt>[r]|-{pr_{01}}\ar[r]|-{G\times m_{12}} \ar@<8pt>[r]|-{m_{01}\times G} & G\times G \ar@<4pt>[r]|-m\ar@<-4pt>[r]|-{pr_0} & G} \right).
\]
The fiber of the morphism $\mathcal{E}G_{\bullet} \to \mathcal{B}G_{\bullet}$ is the constant simplicial object $G$ in $\sPre(\Sm_S)$.
It is easily verified that for each morphism $[n]\to [m]$ in the simplex category $\Delta$, the diagram
\[
\xymatrix{
\mathcal{E}G_m \ar@{->}[d]\ar@{->}[r] & \mathcal{E}G_n\ar@{->}[d]\\
\mathcal{B}G_m \ar@{->}[r] & \mathcal{B}G_n\\
}
\]
is a pullback in $\sPre(\Sm_S)$.
The corners of this square are motivic fibrant because they are finite products of the $\A^1$-invariant discrete Nisnevich sheaf $G$.
Moreover, the square is a homotopy pullback in the objectwise model category $\sPre(\Sm_S)$ since $\mathcal {E}G_n\to\mathcal{B}G_n$ is a fibration.
We conclude that the square above is also a motivic homotopy pullback for every morphism $[n] \to [m]$.
But then, if property \eqref{HD2} were satisfied, it would follow that the diagram
\begin{equation}\label{eqn:objectwisebutnotmotivichopullback}
\begin{split}
\xymatrix{
G\simeq \mathcal{E}G_0 \ar@{->}[d]\ar@{->}[r] & \hocolim \mathcal{E}G_{\bullet} \ar@{->}[d]\simeq EG\simeq{*}\\
{*}\simeq \mathcal{B}G_0 \ar@{->}[r] & \hocolim\mathcal{B}G_{\bullet}\simeq BG\phantom{\simeq{*}}\\
}
\end{split}
\end{equation}
is a motivic homotopy pullback.
As explained above, this leads to a contradiction. 
\end{proof}

\subsection{Motivic homotopy pullbacks}\label{partialinteractionofsingandnis}

In this subsection, we collect some results on the interaction between Nisnevich fibrant replacement and the $\Sing$-functor, especially in relation with homotopy pullbacks.

Consider the left Bousfield localization $\sPre(\Sm_S)\to\sPre(\Sm_S)_{\Nis}$ from the projective to the Nisnevich local model structure and let $L_{\Nis}$ be a fibrant replacement functor.
Recall that a commutative square $Q$ of simplicial presheaves is a Nisnevich local homotopy pullback if and only if the square $L_{\Nis}(Q)$ is an objectwise homotopy pullback.
As this Bousfield localization is homotopy left exact by Theorem~\ref{jardinevoevodsky}, an objectwise homotopy pullback square $Q$ is also a Nisnevich local homotopy pullback.

Now consider the Bousfield localization $\sPre(\Sm_S) \to  \sPre(\Sm_S)_{\A^1, \Nis}$ to the motivic model structure and let $L_{\mot}$ be a fibrant replacement functor.
Again, a commutative square $Q$ is a motivic homotopy pullback if and only if the square $L_{\mot}(Q)$ is an objectwise homotopy pullback.
However, as this Bousfield localization is not left exact by Proposition~\ref{prop:motivicisnotatopos}, there exists an objectwise homotopy pullback which is not a motivic homotopy 
pullback (see, e.g., Diagram~\eqref{eqn:objectwisebutnotmotivichopullback}).

In this subsection we will identify some objectwise homotopy pullbacks which are also motivic homotopy pullbacks.
We will make use of the notion of an $\A^1$-local simplicial presheaf from the beginning of the previous Subsection~\ref{motivicnotatopos}.

\begin{proposition}\label{onlylowerrightcornera1local}
Let $X\in\sPre(\Sm_S)$ be $\A^1$-local and let
\[
\xymatrix{
Y' \ar@{->}[d]\ar@{->}[r]  & X'\ar@{->}[d]\\
Y\ar@{->}[r] & X
}
\]
be an objectwise homotopy pullback. Then it is also a motivic homotopy pullback. 
\end{proposition}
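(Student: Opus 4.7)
The plan is to compute the motivic fibrant replacement of the square $Q$ by alternating applications of $L_{\Nis}$ and $\Sing$ and to verify that the objectwise homotopy pullback property of $Q$ survives at each stage of this iteration, making essential use of the hypothesis that $X$ is $\A^1$-local.

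First, I would apply $L_{\Nis}$ to $Q$. By Theorem~\ref{jardinevoevodsky}, the Nisnevich localization is homotopy left exact, so $L_{\Nis}(Q)$ is again an objectwise homotopy pullback whose corners are Nisnevich fibrant. Since $X$ is $\A^1$-local, the bottom-right corner $L_{\Nis}(X)$ is by definition $\A^1$-invariant, and hence already motivic fibrant. This means that throughout any subsequent iteration, the bottom-right corner will not change homotopy type.

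Next, I would realize $L_{\mot}(Q)$ as the transfinite colimit of the iterated diagram
\[
L_{\Nis}(Q) \to \Sing L_{\Nis}(Q) \to L_{\Nis}\Sing L_{\Nis}(Q) \to \cdots,
\]
which is the standard construction of a motivic fibrant replacement starting from a Nisnevich fibrant object. The bottom-right corner of each iterate remains weakly equivalent to $L_{\Nis}(X)$, since $\Sing$ fixes $\A^1$-invariant objects up to weak equivalence and $L_{\Nis}$ fixes Nisnevich fibrant objects. The key claim is that each iterate is an objectwise homotopy pullback. For the $L_{\Nis}$-steps this is again homotopy left exactness. For the $\Sing$-steps, this is where the hypothesis on $X$ enters: $\Sing(P)(U)$ is the diagonal of the bisimplicial set $P(U \times \ADelta^\bullet)$, and the diagonal of a levelwise homotopy pullback of simplicial spaces is a homotopy pullback provided that the base simplicial space satisfies the $\pi_\ast$-Kan condition (this is a Bousfield--Friedlander-type statement). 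In our situation, the base evaluated on $U \times \ADelta^\bullet$ is homotopy constant in the external direction, precisely because the bottom-right corner is $\A^1$-invariant, so the $\pi_\ast$-Kan condition holds trivially.

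Since sequential colimits of simplicial sets preserve homotopy pullbacks along cofibrations, the colimit $L_{\mot}(Q)$ is again an objectwise homotopy pullback, and all of its corners are motivic fibrant by construction. An objectwise homotopy pullback of motivic fibrant objects is the same as a motivic homotopy pullback, so $Q$ is itself a motivic homotopy pullback, as claimed. The main technical obstacle is the Bousfield--Friedlander step, which requires unwinding the construction of $\Sing$ as a diagonal and carefully verifying the $\pi_\ast$-Kan condition from the $\A^1$-invariance of the base; without the $\A^1$-locality of $X$, this condition fails and the iteration can destroy the homotopy pullback property, in line with the obstruction to being a model topos exhibited in Proposition~\ref{prop:motivicisnotatopos}.
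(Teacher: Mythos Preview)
Your argument is correct but takes a markedly different route from the paper, whose entire proof is the single citation ``This follows directly from \cite[Lemma~A.3]{JaSymm}.'' You instead realize $L_{\mot}$ as a transfinite iteration of $L_{\Nis}$ and $\Sing$ and verify that the objectwise homotopy pullback property survives each stage---the $L_{\Nis}$-steps by homotopy left exactness, the $\Sing$-steps by a Bousfield--Friedlander argument, which is straightforward here since the $\A^1$-invariance of $L_{\Nis}(X)$ makes the base bisimplicial set genuinely homotopy constant in the $\ADelta^\bullet$-direction. This is essentially the technique the paper deploys later for the stronger Theorem~\ref{theoremonmotivichomotopypullbacks} (where only $\pi_0^{\aff}$-$\A^1$-locality is assumed and the finer Lemma~\ref{betterbousfieldfriedlander} is required); there a single pass of $L_{\Nis}\Sing L_{\Nis}$ suffices via Theorem~\ref{asokhoyoiswendt}, after which the paper invokes the present proposition as a black box. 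Your route is self-contained and makes the mechanism visible, at the cost of a transfinite iteration and a filtered-colimit argument; the paper's citation is shorter but defers the actual content to Jardine.
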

\begin{proof}

This follows directly from \cite[Lemma~A.3]{JaSymm}.
\end{proof}

\begin{corollary}
Let $X$ be pointed and $\A^1$-local and $Y'\to X'\to X$ an objectwise homotopy fiber sequence. Then it is also a motivic homotopy fiber sequence.
\end{corollary}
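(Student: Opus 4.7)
The plan is to deduce this statement as a direct application of Proposition~\ref{onlylowerrightcornera1local}. By definition, an objectwise homotopy fiber sequence $Y'\to X'\to X$, where $X$ is pointed, amounts to an objectwise homotopy pullback square
\[
\xymatrix{
Y' \ar[r] \ar[d] & X' \ar[d] \\
\ast \ar[r] & X
}
\]
(with the left-hand vertical morphism induced by the basepoint of $X$). Likewise, the condition of being a motivic homotopy fiber sequence is precisely the condition that this same square be a motivic homotopy pullback.

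Since the lower right corner $X$ is $\A^1$-local by hypothesis, the hypothesis of Proposition~\ref{onlylowerrightcornera1local} is satisfied, and one concludes that the objectwise homotopy pullback square above is also a motivic homotopy pullback. Unwinding the definition, this gives precisely the claim that $Y'\to X'\to X$ is a motivic homotopy fiber sequence.

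There is essentially no obstacle here beyond the translation between ``homotopy fiber sequence'' and the corresponding ``homotopy pullback square with terminal object in the lower left corner.'' Note in particular that we do not need to assume anything about the pointed simplicial presheaf $\ast$ in the lower left corner (which is anyway $\A^1$-local, being objectwise contractible); the proposition applies as soon as the target $X$ of the fiber sequence is $\A^1$-local.
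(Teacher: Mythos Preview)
Your proposal is correct and matches the paper's approach: the paper states this as an immediate corollary of Proposition~\ref{onlylowerrightcornera1local} with no separate proof, and your argument---reading the fiber sequence as the homotopy pullback square with $X$ in the lower right corner and applying the proposition---is exactly the intended deduction. One minor slip: the morphism induced by the basepoint of $X$ is the bottom horizontal map $\ast \to X$, not the left vertical one, but this does not affect the argument.
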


In the rest of this subsection, we want to replace the $\A^1$-locality condition in Proposition~\ref{onlylowerrightcornera1local} by a weaker property. We denote by
\[
 i\colon \Sm_S^\aff \hookrightarrow \Sm_S
\]
the full subcategory of affine schemes (in the absolute sense). Precomposition yields the left adjoint $i^*$ of a Quillen adjunction
\begin{equation}\label{adjunction:affines}
 i^*: \sPre(\Sm_S) \rightleftarrows \sPre(\Sm_S^\aff):i_*
\end{equation}
with respect to the projective model structures.
When both sides are Bousfield localized at the Nisnevich local weak equivalences, this adjunction becomes a Quillen equivalence by \cite[Lemma~3.3.2]{AsokHoyoisWendt}.

\begin{definition}\label{pi0a1local}
A simplicial presheaf $X\in\sPre(\Sm_S)$ is called \emph{$\pi_0^\aff$-$\A^1$-local} if its $\pi_0$-presheaf is $\A^1$-invariant on affine smooth schemes after a Nisnevich local fibrant replacement, i.e., if the presheaf $i^*(\pi_0(L_{\Nis}X))$ is $\A^1$-invariant.
\end{definition}

\begin{remark}
The property of $\pi^\aff_0$-$\A^1$-locality is clearly invariant under Nisnevich local weak equivalences.
If a simplicial presheaf is $\A^1$-local, it is also $\pi^\aff_0$-$\A^1$-local.
\end{remark}

\begin{example}[{see the proof of~\cite[Thm.~5.2.3]{AsokHoyoisWendt}}]
Fix an integer $r\geq 1$ and let $\operatorname{Vect}_r\colon \Sm_S^\op\to\sSet$ denote a functorial version of the groupoid $\operatorname{Vect}_r(U)$ of vector bundles on the scheme $U$ of fixed rank $r$.
An objectwise application of the classifying space functor $B$ yields a simplicial presheaf $B\operatorname{Vect}_r\in\sPre(\Sm_S)$ whose $\pi_0$-presheaf assigns to a scheme $U$ the set 
$\pi_0(B\operatorname{Vect}_r)(U)$ of isomorphism classes of vector bundles of rank $r$ over $U$. This presheaf $\pi_0 (B \operatorname{Vect}_r)$ is $\A^1$-invariant on afffine smooth schemes $U$ if $U$ 
satisfies the Bass--Quillen conjecture (e.g., if $S$ is the spectrum of a field). Since $B\operatorname{Vect}_r$ is always Nisnevich local fibrant, it follows, in this case, that it is also $\pi_0^\aff$-$\A^1$-local.
\end{example}

The $\Sing$-functor does not preserve Nisnevich local fibrancy.
In fact, the $\Sing$-functor does not even preserve $\A^1$-locality, even for discrete Nisnevich local fibrant objects, i.e., sheaves of sets \cite[Ex.~3.2.7]{MV99}.
It was an open question (see \cite[Rem.~2.2.9]{AsokMorel} whether $\Sing(-)$ would at least send schemes to $\A^1$-local objects.
This was answered negatively in \cite{Savant}.
However, we have the following partial results in this direction which are instances of the $\pi_*$-Kan condition.

\begin{theorem}[{Asok--Hoyois--Wendt \cite{AsokHoyoisWendt}}]\label{asokhoyoiswendt}
Let $X\in\sPre(\Sm_S)$ be $\pi^\aff_0$-$\A^1$-local.
Then $L_\Nis\Sing(L_{\Nis} X)$ is already motivic fibrant.
\end{theorem}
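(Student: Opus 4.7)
The plan is as follows. The object $L_\Nis\Sing(L_\Nis X)$ is Nisnevich local fibrant by construction, so the only remaining task is to verify $\A^1$-invariance. Write $Y := L_\Nis X$; the hypothesis then says that $i^{*}\pi_0(Y)$ is an $\A^1$-invariant presheaf on $\Sm_S^\aff$.

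First, I would transfer the question to the affine subsite via the Quillen equivalence
\[
i^{*}\colon \sPre(\Sm_S)_{\Nis}\rightleftarrows\sPre(\Sm_S^\aff)_{\Nis}\colon i_{*}
\]
of \cite[Lemma~3.3.2]{AsokHoyoisWendt}. Since $\A^1$-invariance of a Nisnevich local fibrant object is detected on affines, and since $U\times\ADelta^n$ is affine whenever $U$ is, the $\Sing$-construction is compatible with restriction to affines in the sense that $i^{*}\Sing(Y)=\Sing(i^{*}Y)$. Hence it would suffice to prove that $L_\Nis^\aff\Sing(i^{*}Y)$ is $\A^1$-invariant as a presheaf on $\Sm_S^\aff$, where $L_\Nis^\aff$ denotes a Nisnevich local fibrant replacement on the affine subsite.

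Next, I would analyze the Nisnevich sheaves of homotopy groups of $L_\Nis^\aff\Sing(i^{*}Y)$. Before sheafification, each presheaf $U\mapsto\pi_n(\Sing(i^{*}Y)(U))$ is $\A^1$-invariant because $\Sing$ lands in $\A^1$-invariant objects by Proposition~\ref{A1project}; the difficulty is that Nisnevich sheafification need not preserve $\A^1$-invariance in general. Using that $Y$ itself is already Nisnevich local fibrant (so that its sheaves of homotopy groups are computed directly from its values at a Nisnevich hypercover) together with the $\pi_0^\aff$-$\A^1$-locality input, I would identify $\tilde\pi_0(L_\Nis^\aff\Sing(i^{*}Y))$ with an $\A^1$-invariant sheaf.

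The hard part, and the heart of the argument, will be bootstrapping the $\A^1$-invariance from $\tilde\pi_0$ to all the higher sheaves $\tilde\pi_n$. This is the $\pi_{*}$-Kan condition flagged in the paragraph preceding the theorem: one must verify that the cosimplicial simplicial presheaf $U\mapsto Y(U\times\ADelta^\bullet)$ satisfies a fibrancy-type condition on the affine subsite which makes a Bousfield--Kan-style spectral sequence for the sheaves of homotopy groups of $\Sing(i^{*}Y)$ converge as expected, so that the invariance of $\tilde\pi_0$ forces invariance of every $\tilde\pi_n$. Combining this with the identifications of the previous steps and pulling the resulting $\A^1$-invariance back through $i_{*}$ would then complete the proof of motivic fibrancy of $L_\Nis\Sing(L_\Nis X)$.
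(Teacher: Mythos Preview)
The paper does not prove this statement at all: its proof reads in full ``See \cite[Thm.~5.1.3]{AsokHoyoisWendt}.'' So there is nothing to compare your argument against here; the result is quoted as a black box from the cited source.

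That said, a comment on your sketch. The reduction to the affine subsite via \cite[Lemma~3.3.2]{AsokHoyoisWendt} and the observation that $i^{*}\Sing\cong\Sing\,i^{*}$ (since $U\times\ADelta^n$ is affine for affine $U$) are both correct and are indeed the opening moves in the Asok--Hoyois--Wendt argument. However, your ``hard part'' is not really an argument: you describe it as ``bootstrapping the $\A^1$-invariance from $\tilde\pi_0$ to all the higher sheaves $\tilde\pi_n$'' via a Bousfield--Kan spectral sequence and a $\pi_*$-Kan condition, but you do not say what that condition is, why the hypothesis on $\pi_0$ implies it, or why it forces $\A^1$-invariance of the higher homotopy sheaves after Nisnevich sheafification. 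In the actual proof in \cite{AsokHoyoisWendt} the mechanism is rather different from a spectral-sequence bootstrap: one shows directly that $\Sing(i^{*}Y)$ already satisfies \emph{affine Nisnevich excision} (i.e., sends affine Nisnevich squares to homotopy pullbacks), and the $\pi_0^{\aff}$-$\A^1$-locality hypothesis enters precisely to guarantee the relevant realization-fibration / $\pi_*$-Kan property for the simplicial object $[q]\mapsto Y(-\times\ADelta^q)$ evaluated on affine Nisnevich squares (compare Lemma~\ref{betterbousfieldfriedlander}). Once affine Nisnevich excision holds, $\Sing(i^{*}Y)$ is already Nisnevich local on affines and $\A^1$-invariant, so a further $L_{\Nis}$ does nothing and motivic fibrancy follows. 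Your outline gestures at the right neighborhood but does not identify this key step.
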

\begin{proof}
See \cite[Thm.~5.1.3]{AsokHoyoisWendt}.
\end{proof}

There is the following strengthening of \cite[Lemma~4.2.1]{AsokHoyoisWendt}.

\begin{lemma}\label{betterbousfieldfriedlander}
Let
\begin{equation}\label{potentiala1homotopypullbackofbisimplicialsets}
\begin{split}
\xymatrix{
Y' \ar@{->}[d]\ar@{->}[r]  & X'\ar@{->}[d]\\
Y\ar@{->}[r] & X
}
\end{split}
\end{equation}
be a commutative diagram of bisimplicial sets (with indices $p$ and $q$) such that for each $q\geq 0$, the diagram
\begin{equation*}
\begin{split}
\xymatrix{
Y'_q \ar@{->}[d]\ar@{->}[r]  & X'_q\ar@{->}[d]\\
Y_q\ar@{->}[r] & X_q
}
\end{split}
\end{equation*}
is a homotopy pullback of simplicial sets.
If the simplicial set $([q]\mapsto\pi_0(X_q))$ is constant, then the diagonal applied to \eqref{potentiala1homotopypullbackofbisimplicialsets} is a homotopy pullback of simplicial sets.
\end{lemma}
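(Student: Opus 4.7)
The plan is to apply the Bousfield--Friedlander theorem on diagonals of bisimplicial pullback squares, after a preliminary reduction to a strict pullback.

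First, I would reduce to the situation where, for each $q \geq 0$, the map $X'_q \to X_q$ is a Kan fibration of simplicial sets. Factoring $X'_q \to X_q$ at each simplicial degree as a trivial cofibration followed by a Kan fibration yields a levelwise weakly equivalent bisimplicial set $\tilde{X}'$ together with a levelwise Kan fibration $\tilde{X}' \to X$. Forming the strict levelwise pullback $\tilde{Y}' := Y \times_X \tilde{X}'$, the hypothesis that the original square is a levelwise homotopy pullback produces a levelwise weak equivalence $Y' \xrightarrow{\sim} \tilde{Y}'$. Since the diagonal functor preserves levelwise weak equivalences of bisimplicial sets, it suffices to prove the conclusion for the strict pullback square involving $\tilde{X}'$ and $\tilde{Y}'$.

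Second, I would invoke the Bousfield--Friedlander theorem: for a levelwise Kan fibration of bisimplicial sets whose base satisfies the $\pi_*$-Kan condition, the diagonal of any strict levelwise pullback along this map is a homotopy pullback of simplicial sets. The hypothesis that $[q] \mapsto \pi_0(X_q)$ is a constant simplicial set is precisely what is needed to verify the $\pi_*$-Kan condition in the required form: it permits a coherent choice of basepoint $v_q \in X_{q,0}$ across all simplicial degrees, and the resulting simplicial sets $[q] \mapsto \pi_n(X_q, v_q)$ for $n \geq 1$ inherit an honest simplicial group structure, which automatically makes them Kan complexes. Once the $\pi_*$-Kan condition is in hand, the fiber sequence analysis underlying Bousfield--Friedlander identifies the homotopy fiber of the diagonal map with the diagonal of the levelwise homotopy fibers, which by the levelwise pullback hypothesis is exactly the diagonal of the homotopy fiber of $Y' \to Y$.

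The main obstacle I expect is the careful passage from the weaker-looking constancy-of-$\pi_0$ hypothesis to the full $\pi_*$-Kan condition needed to apply Bousfield--Friedlander; this is the refinement that strengthens \cite[Lemma~4.2.1]{AsokHoyoisWendt}. A secondary subtlety is that the map $Y \to X$ is entirely arbitrary and need not satisfy any fibrancy condition of its own, so one must be careful to use only properties of the right-hand vertical map in the square after the initial reduction.
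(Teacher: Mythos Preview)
Your proposal has a genuine gap in the second step. The classical Bousfield--Friedlander theorem (Theorem~B.4 in their paper, or Goerss--Jardine IV.4.9) does \emph{not} say what you claim. It requires that \emph{both} bisimplicial sets in the right-hand column---in the notation of the lemma, both $X'$ and $X$---satisfy the $\pi_*$-Kan condition, and in addition that the map of simplicial sets $[q]\mapsto \pi_0(X'_q)\to [q]\mapsto\pi_0(X_q)$ be a Kan fibration. Your verification addresses only the base $X$: constancy of $[q]\mapsto\pi_0(X_q)$ does imply that $X$ satisfies the $\pi_*$-Kan condition and that any map into $\pi_0(X)$ is a fibration, so two of the three hypotheses are in hand. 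But the lemma imposes no hypothesis whatsoever on $X'$, and there is no mechanism in your argument to force $X'$ to satisfy the $\pi_*$-Kan condition. You cannot repair this by a levelwise replacement, since the $\pi_*$-Kan condition depends only on the levelwise weak homotopy type.

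This is exactly why the paper appeals to Rezk \cite[Prop.~5.4]{Rezkpistar} rather than to Bousfield--Friedlander. Rezk's result is a genuine strengthening: it asserts that whenever the base has constant $\pi_0$, \emph{every} map into it is a realization fibration, with no condition on the source. His argument does not proceed by verifying the $\pi_*$-Kan condition on both sides; it uses instead his characterization of realization fibrations via a local criterion on homotopy fibers. The strengthening you identify as ``the main obstacle'' is therefore not a matter of sharpening the $\pi_*$-Kan verification for $X$, but of dispensing with the corresponding condition on $X'$ altogether---and that requires a different argument from the one you outline.
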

\begin{proof}
This is~\cite[Prop.~5.4]{Rezkpistar}. The statement in op.cit.~uses simplicial spaces: Here $p$ is the `space direction'
and $q$ is the `simplicial direction'. In \cite{Rezkpistar}, a morphism $X'\to X$ of simplicial spaces is called a realization fibration, if the conclusion of the lemma is valid for all commutative diagrams \eqref{potentiala1homotopypullbackofbisimplicialsets} which are homotopy pullbacks in each degree $q$.
\end{proof}

The previous lemma can be used to prove a strengthening of Proposition~\ref{onlylowerrightcornera1local}.
The proof is similar to parts of \cite[Thm.~4.2.3]{AsokHoyoisWendt}.

\begin{theorem}\label{theoremonmotivichomotopypullbacks}
Let $X\in\sPre(\Sm_S)$ be $\pi^\aff_0$-$\A^1$-local and let
\begin{equation}\label{potentiala1homotopypullback}
\begin{split}
\xymatrix{
Y' \ar@{->}[d]\ar@{->}[r]  & X'\ar@{->}[d]\\
Y\ar@{->}[r] & X
}
\end{split}
\end{equation}
be an objectwise homotopy pullback.
Then it is also a motivic homotopy pullback. 
\end{theorem}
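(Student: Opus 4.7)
The strategy is to combine Theorem~\ref{asokhoyoiswendt} with Lemma~\ref{betterbousfieldfriedlander} by iterating the functor $L_{\Nis}\Sing$ to construct a motivic fibrant replacement of the square corner-wise. First, I reduce to affine schemes via the Quillen equivalence of~\eqref{adjunction:affines}, which extends to the motivic model structures since $\A^1$-localization is compatible with restriction to affines. It thus suffices to prove that $i^{*}Q$ is a motivic homotopy pullback in $\sPre(\Sm_S^{\aff})$, and I henceforth suppress $i^{*}$ from the notation.

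The key inductive step is the following claim: if $Q'$ is a square of Nisnevich fibrant simplicial presheaves on $\Sm_S^{\aff}$ that is objectwise a homotopy pullback and whose lower-right corner $X'$ is $\pi_0^{\aff}$-$\A^1$-local, then $L_{\Nis}\Sing(Q')$ is again such a square and moreover its lower-right corner is motivic fibrant. For the first part, fix an affine $U$. Every $U\times \ADelta^q$ is still affine, so each row of the bisimplicial square $Q'(U \times \ADelta^{\bullet})$ is a homotopy pullback of simplicial sets by hypothesis. The $\pi_0^{\aff}$-$\A^1$-locality and Nisnevich fibrancy of $X'$ imply that $\pi_0(X')$ is $\A^1$-invariant on affines; therefore the projection $U \times \ADelta^q \to U$ induces isomorphisms $\pi_0(X'(U)) \xrightarrow{\cong} \pi_0(X'(U\times \ADelta^q))$ which identify the face and degeneracy maps of the simplicial set $[q]\mapsto \pi_0(X'(U\times \ADelta^q))$ with identities. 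Hence this simplicial set is constant, and Lemma~\ref{betterbousfieldfriedlander} applies to produce a homotopy pullback of simplicial sets after taking diagonals, which is $\Sing(Q')(U)$. Applying $L_{\Nis}$ to $\Sing(Q')$ preserves this by the homotopy left exactness of Nisnevich localization (Theorem~\ref{jardinevoevodsky}). The motivic fibrancy of the new lower-right corner $L_{\Nis}\Sing(X') \simeq L_{\Nis}\Sing(L_{\Nis}X')$ is precisely Theorem~\ref{asokhoyoiswendt} applied to $X'$; in particular, it is again $\pi_0^{\aff}$-$\A^1$-local.

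Starting from $L_{\Nis}(Q)$, which satisfies the hypotheses of the claim by Theorem~\ref{jardinevoevodsky} and the Nisnevich invariance of $\pi_0^{\aff}$-$\A^1$-locality, I iterate to obtain a sequential diagram of objectwise homotopy pullback squares whose sequential homotopy colimit is a motivic fibrant replacement of $Q$ corner-wise. Because filtered homotopy colimits in $\sSet$ commute with finite homotopy limits, the resulting square $L_{\mot}(Q)$ is an objectwise homotopy pullback, so $Q$ is a motivic homotopy pullback. I expect the main obstacle to be the verification of the constancy hypothesis of Lemma~\ref{betterbousfieldfriedlander}, which is precisely where $\pi_0^{\aff}$-$\A^1$-locality enters essentially and which forces the reduction to affine schemes, since only there is $U \times \ADelta^q$ guaranteed to be affine.
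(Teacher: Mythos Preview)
Your argument is correct, and the essential computational core --- restricting to affine $U$, unfolding $\Sing(Q)(U)$ as the diagonal of a bisimplicial square, and invoking Lemma~\ref{betterbousfieldfriedlander} via the constancy of $[q]\mapsto \pi_0(X(U\times\ADelta^q))$ guaranteed by $\pi_0^{\aff}$-$\A^1$-locality --- is exactly the paper's. The difference lies in how you finish.

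The paper does \emph{not} iterate. After replacing $Q$ by $L_{\Nis}(Q)$ and then showing (via the affine reduction and Lemma~\ref{betterbousfieldfriedlander}) that $L_{\Nis}\Sing(Q)$ is an objectwise homotopy pullback, the paper observes that the lower-right corner $L_{\Nis}\Sing(L_{\Nis}X)$ is already motivic fibrant by Theorem~\ref{asokhoyoiswendt}, hence $\A^1$-local. At that point Proposition~\ref{onlylowerrightcornera1local} applies directly: an objectwise homotopy pullback with $\A^1$-local lower-right corner is automatically a motivic homotopy pullback. This short-circuits your transfinite construction entirely. Your route --- iterating $L_{\Nis}\Sing$ and passing to the sequential homotopy colimit to build a corner-wise motivic fibrant replacement, then using that filtered homotopy colimits commute with finite homotopy limits --- is a legitimate alternative, but it requires checking that the colimit really is a motivic fibrant replacement of each corner (i.e.\ that the transition maps are motivic equivalences and that the colimit satisfies Nisnevich descent and $\A^1$-invariance), all of which is bypassed by the single invocation of Proposition~\ref{onlylowerrightcornera1local}. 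In short: same engine, but the paper shifts into high gear one step earlier.
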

\begin{proof}
By homotopy left exactness of the Nisnevich localization functor, we may assume that all objects are Nisnevich local fibrant.
By Theorem~\ref{asokhoyoiswendt} and Proposition~\ref{onlylowerrightcornera1local} it suffices to show that $L_\Nis\Sing(\ref{potentiala1homotopypullback})$ is an objectwise homotopy pullback.  
We know from \cite[Lemma~3.3.2]{AsokHoyoisWendt} that this square $L_\Nis\Sing(\ref{potentiala1homotopypullback})$ is objectwise equivalent to the square $i_* L_{\Nis,\aff} i^* \Sing(\ref{potentiala1homotopypullback})$.
As Quillen right adjoints preserve homotopy pullbacks, it suffices to show that the square $i^* \Sing(\ref{potentiala1homotopypullback})$ is an objectwise homotopy pullback.
In other words, we have to show that $\Sing(\ref{potentiala1homotopypullback})(U)$ is a homotopy pullback square of simplicial sets for every affine scheme $U\in\Sm_S^\aff$.
We fix such a scheme $U$ and consider the diagram
\[
\xymatrix{
Y'_p(\ADelta^q\times U) \ar@{->}[d]\ar@{->}[r]  & X'_p(\ADelta^q\times U)\ar@{->}[d]\\
Y_p(\ADelta^q\times U)\ar@{->}[r] & X_p(\ADelta^q\times U)
}
\]
of bisimplicial presheaves whose diagonal is the square $\Sing(\ref{potentiala1homotopypullback})(U)$ in question.
Now, the simplicial set $[q]\mapsto\pi_0(X(\ADelta^q\times U))$ is constant by assumption.
Hence $\Sing(\ref{potentiala1homotopypullback})(U)$ is an objectwise homotopy pullback by Lemma~\ref{betterbousfieldfriedlander}.
\end{proof}

\begin{corollary}[{{see~\cite[Thm.~2.1.5]{AsokHoyoisWendtII}}}]
Let $X\in\sPre(\Sm_S)$ be pointed and $\pi^\aff_0$-$\A^1$-local and let $Y'\to X'\to X$ be an objectwise homotopy fiber sequence.
Then it is also a motivic homotopy fiber sequence.
\end{corollary}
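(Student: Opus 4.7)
The plan is to derive this directly from Theorem~\ref{theoremonmotivichomotopypullbacks} by rewriting the homotopy fiber sequence as a homotopy pullback square. Recall that a homotopy fiber sequence $Y' \to X' \to X$ of pointed objects is, by definition, the data of a homotopy pullback square
\[
\xymatrix{
Y' \ar[d] \ar[r]  & X' \ar[d] \\
{*} \ar[r] & X
}
\]
in which the lower left corner is the point (the basepoint of $X$). I would first observe that this is an objectwise homotopy pullback in $\sPre(\Sm_S)$, because objectwise homotopy fiber sequences are exactly those squares of this shape whose evaluation at each $U \in \Sm_S$ is a homotopy fiber sequence of simplicial sets, which is equivalent to being a homotopy pullback of simplicial sets.

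Next, I would apply Theorem~\ref{theoremonmotivichomotopypullbacks} to this square. The only hypothesis to check is that the lower right corner $X$ is $\pi_0^{\aff}$-$\A^1$-local, which is exactly our assumption. The theorem then guarantees that the same square is a motivic homotopy pullback. Since the lower left corner is the (terminal, hence motivic fibrant) point and the lower right corner is $X$, being a motivic homotopy pullback means precisely that $Y'$ is the motivic homotopy fiber of $X' \to X$, i.e., that $Y' \to X' \to X$ is a motivic homotopy fiber sequence.

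There is no real obstacle here: the corollary is a direct specialization of Theorem~\ref{theoremonmotivichomotopypullbacks} to pullbacks along a basepoint. The only point worth double-checking is the (standard) equivalence between the notion of an objectwise homotopy fiber sequence and that of an objectwise homotopy pullback along the basepoint, which is immediate from the definition of homotopy fiber in $\sSet$ applied sectionwise.
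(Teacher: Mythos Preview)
Your proposal is correct and matches the paper's approach: the paper states this corollary without proof, as an immediate specialization of Theorem~\ref{theoremonmotivichomotopypullbacks} to the homotopy pullback square along the basepoint, exactly as you spell out.
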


\begin{corollary}\label{corollaryomegaanda1}
Let $X\in\sPre(\Sm_S)$ be a pointed objectwise fibrant simplicial presheaf which is $\pi^\aff_0$-$\A^1$-local. 
Then
\begin{equation}\label{omegaandla1}
 \Omega L_{\mot}(X) \simeq L_{\mot} \Omega(X).
\end{equation}
\end{corollary}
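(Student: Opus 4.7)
The plan is to realize $\Omega X$ as an objectwise homotopy pullback of the canonical span $*\to X \leftarrow *$ and then invoke Theorem~\ref{theoremonmotivichomotopypullbacks} to upgrade this square to a motivic homotopy pullback. Once this is done, reading off the upper-left corner after motivic fibrant replacement will yield the claimed equivalence.

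Concretely, I would first observe that the defining pullback square for $\Omega X$,
\[
\xymatrix{
\Omega X \ar[r] \ar[d] & {*} \ar[d] \\
{*} \ar[r] & X
}
\]
is an objectwise homotopy pullback, since $X$ is pointed and objectwise fibrant. By hypothesis $X$ is $\pi^\aff_0$-$\A^1$-local, so Theorem~\ref{theoremonmotivichomotopypullbacks} applies verbatim and promotes this square to a motivic homotopy pullback. Unwinding the definition, this says that after applying the fibrant replacement $L_\mot$ the resulting square is an objectwise homotopy pullback, with lower-right corner $L_\mot(X)$ and upper-left corner $L_\mot(\Omega X)$. Since $L_\mot$ preserves contractibility of the basepoint, reading off the upper-left corner of this new square also identifies it with $\Omega L_\mot(X)$, yielding the desired motivic weak equivalence~\eqref{omegaandla1}.

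The one ancillary check I would include is that $\Omega L_\mot(X)$ is itself motivic fibrant, so that the above chain of identifications really lives in the motivic homotopy category on the nose; this is standard, since motivic fibrancy is characterized by objectwise fibrancy together with Nisnevich descent and $\A^1$-invariance, all of which are stable under the homotopy limit $*\times^h_X *$ defining $\Omega$. I do not foresee any real obstacle: the corollary is essentially an immediate consequence of Theorem~\ref{theoremonmotivichomotopypullbacks} applied to the loop-space pullback square, with the only subtlety being the routine verification that $\Omega$ preserves motivic fibrancy.
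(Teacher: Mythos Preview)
Your proposal is correct and is exactly the intended argument: the paper states this corollary without proof immediately after Theorem~\ref{theoremonmotivichomotopypullbacks}, as a direct application of that theorem to the loop-space pullback square $\ast \to X \leftarrow \ast$. Your ancillary check that $\Omega L_{\mot}(X)$ is motivic fibrant is a fine addition but not strictly required for the equivalence~\eqref{omegaandla1} to hold in the motivic homotopy category.
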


\begin{remark}
Over the spectrum of a perfect infinite field, Morel showed in \cite[Thm.~6.46]{Morelbook} that for a pointed and stalkwise connected Nisnevich local fibrant $X$, the equivalence \eqref{omegaandla1} holds if and only if the sheaf of groups $\tilde\pi_0(L_{\mot} \Omega X)$ is strongly $\A^1$-in\-variant.
\end{remark}

\section{\texorpdfstring{The $\Uc$-local model category for $\SSm_S$}{The U-local model category for SmS}}\label{sectionfive}

Applying Theorem~\ref{local-simp} to $\SSm_S$ equipped with the Nisnevich topology on $\Sm_S$, denoted $\Nis$, we obtain the $\Uc$-local model category $\SPre(\SSm_S)_{\Uc \Nis}$.
By Theorem~\ref{local-topos}, this model category is a model topos. In this section, we compare this model topos with the model $\SPre(\SSm_S)_{\Uc \mathrm{mot}}$ for the motivic 
homotopy theory from Proposition~\ref{prop:equivalenttothemotivic}. We note that even though we restrict here entirely to the case of the $\Uc$-local model structure associated with $\SSm_S$ with the Nisnevich topology, it may also be interesting to consider the corresponding homotopy theory for other Grothendieck topologies on $\Sm_S$ as well.

\subsection{The model topos $\SPre(\SSm_S)_{\Uc \Nis}$.}
Using Proposition~\ref{tauDelta}, we can identify the Grothendieck topology on $\mathrm{Ho}(\SSm_S)$ that gives rise to the model topos $\SPre(\SSm_S)_{\Uc \Nis}$.
A sieve on $X \in \mathrm{Ho}(\SSm_S)$ 
\[
U \rightarrowtail y_{\mathrm{Ho}(\SSm_S)}(X) 
\]
is a $[\Nis]$-\emph{covering sieve} if
\[
\gamma^*(U) \rightarrowtail \gamma^*(y_{\mathrm{Ho}(\C)}(X))
\]
is an isomorphism after Nisnevich sheafification.
As shown in Lemma~\ref{identification-topologies}, this corresponds to a sieve which is generated by the image of a Nisnevich sieve on $\Sm_S$ under $\gamma \colon \Sm_S \to \mathrm{Ho}(\SSm_S)$.
Let $[\Nis]$ denote the collection of $[\Nis]$-covering sieves in $\mathrm{Ho}(\SSm_S)$.
The following proposition is a special case of Proposition~\ref{tauDelta}.

\begin{proposition}
The collection of sieves $[\Nis]$ defines a Grothendieck topology on $\mathrm{Ho}(\SSm_S)$. 
The model topos $\SPre(\SSm_S, [\Nis])$ (see Theorem~\ref{toen-vezzosi}) is the same as $\SPre(\SSm_S)_{\Uc \Nis}$. 
\end{proposition}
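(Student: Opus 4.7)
The plan is to apply the general results of Subsection~\ref{comparinggrothendiecktopologies} directly to the case $\C = \SSm_S$ with $\tau = \Nis$, since the proposition is essentially a special instance of Proposition~\ref{tauDelta} together with the general construction of $[\tau]$.

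First, I would note that $\SSm_S$ is a small simplicial category (after passing to a small skeleton, as fixed in Subsection~\ref{motivicspaces1}) whose underlying ordinary category $\Sm_S$ is equipped with the Grothendieck topology $\Nis$. By Theorem~\ref{local-topos}, the $\Uc$-local model category $\SPre(\SSm_S)_{\Uc \Nis}$ is a $t$-complete model topos. Applying the bijective correspondence of Theorem~\ref{toen-vezzosi2} (i.e., the one given via the construction reviewed in Remark~\ref{bijectivecorrespondenceofTV}) to the homotopy left exact left Bousfield localization
\[
\id_{\Uc\Nis} \colon \SPre(\SSm_S) \to \SPre(\SSm_S)_{\Uc \Nis}
\]
extracts a Grothendieck topology on $\mathrm{Ho}(\SSm_S)$. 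By definition of $[\Nis]$ given just before the statement (and using Remark~\ref{gammaremark} to rephrase the condition in terms of $\gamma^*$), this extracted topology is precisely $[\Nis]$. Hence $[\Nis]$ is a Grothendieck topology on $\mathrm{Ho}(\SSm_S)$, and Lemma~\ref{identification-topologies} specialized to $\tau = \Nis$ confirms the more explicit description of its covering sieves as those generated by Nisnevich covering sieves on $\Sm_S$ under $\gamma \colon \Sm_S \to \mathrm{Ho}(\SSm_S)$.

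Finally, the identification $\SPre(\SSm_S, [\Nis]) = \SPre(\SSm_S)_{\Uc \Nis}$ as model categories is a direct application of Proposition~\ref{tauDelta}. Since there is essentially no obstacle here beyond unwinding the specializations of the already-proved general statements, the only thing to double-check is that the ordinary site $(\Sm_S, \Nis)$ fits the standing assumption in Subsection~\ref{comparinggrothendiecktopologies} that the topos of sheaves on $\C_0$ has enough points. This is well known for the Nisnevich topology (points are given by henselizations at points of schemes in $\Sm_S$), so the hypotheses of Theorems~\ref{local-simp},~\ref{local-topos}, and Proposition~\ref{tauDelta} are all satisfied, and the proposition follows.
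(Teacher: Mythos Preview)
Your proposal is correct and takes essentially the same approach as the paper: the paper simply states that this proposition is a special case of Proposition~\ref{tauDelta} and gives no further proof, and your argument unwinds exactly that specialization (while also verifying the standing hypothesis that the Nisnevich topos has enough points).
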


\begin{remark}(Naturality revisited)
We observed in Remark~\ref{remark:naturality} that for two small simplicial categories $\C$ and $\C'$ whose underlying categories $\C_0$ and $\C_0'$ are equipped with Grothendieck 
topologies $\tau$ and $\tau'$ and a simplicial functor $F\colon \C\to\C'$ restricting to a morphism of sites, the adjunction $(F_!,F^*)$ is not necessarily a Quillen adjunction for 
the $\Uc$-local model structures.
However, this is true in the following special case:
Let $f\colon R\to S$ be a morphism of noetherian schemes of finite Krull dimension.
The pullback functor $F\colon \SSm_S\to \SSm_R$, $U\mapsto U\times_S R$ is a simplicial functor inducing a Quillen adjunction
\[
f^{\Delta,*}\colon\sPre^\Delta(\SSm_S)\leftrightarrows \sPre^\Delta(\SSm_R)\colon f_*^\Delta 
\]
between the projective model categories (it is common to write $(f^*,f_*)$ for $(F_!,F^*)$).
The right adjoint is given by $f_*^\Delta (G)=G(F(-))=G(-\times_S R)$ and the left adjoint is determined via enriched left Kan extension by $\mathrm{map}_S(-,U)\mapsto \mathrm{map}_R(-,U\times_S R)$.

The pair $(f^{\Delta,*}, f_*^\Delta)$ is a Quillen adjunction for the $\Uc$-local model structures if $f^{\Delta,*}$ preserves $\Uc$-local weak equivalences. 
This holds if the diagram
\[
\xymatrix{
\sPre^\Delta(\SSm_S) \ar@{->}[d]_\Uc\ar@{->}[r]^-{f^{\Delta,*}}& \sPre^\Delta(\SSm_R)\ar@{->}[d]^\Uc\\
\sPre(\Sm_S)\ar@{->}[r]^-{f^{*}}& \sPre(\Sm_R)
}
\]
commutes (for which it suffices to check only on representables $\mathrm{map}_S(-,U)$) and $f^*$ preserves Nisnevich local weak equivalences. In the case of a \emph{smooth} morphism $f$, this diagram commutes since we have an isomorphism 
\[
f^*(\mathrm{map}_S(-,U))\cong \mathrm{map}_R(-,U\times_S R)
\]
and $f^*$ is both a left and a right Quillen functor. 
\end{remark} 

The model topos $\SPre(\SSm_S)_{\Uc \Nis}$ can also be modelled in terms of non-enriched simplicial presheaves. This can be done by trasporting the 
$\Uc$-local model structure of $\SPre(\SSm_S)_{\Uc \Nis}$ to $\sPre(\Sm_S)_{\A^1}$ along the Quillen equivalence of Proposition~\ref{A1project}.
We call a morphism $\eta\colon F \to G$ in $\sPre(\Sm_S)$ a \emph{Sing-Nisnevich local weak equivalence} if the induced morphism $\Sing(F) \to \Sing(G)$ is a Nisnevich local weak equivalence. 

\begin{theorem} \label{theorem:singlocalmodel}
There is a left proper simplicial combinatorial model structure 
\[
\sPre(\Sm_S)_{\Sing\mathrm{\text{-}}\Nis} 
\]
on the category $\sPre(\Sm_S)$ where the cofibrations are the projective cofibrations and the weak equivalences are the Sing-Nisnevich local weak equivalences.
This is a left Bousfield localization of the model category $\sPre(\Sm_S)_{\A^1}$ and a model topos. The adjunction 
\[
\Hs: \sPre(\Sm_S)_{\Sing\mathrm{\text{-}}\Nis} \rightleftarrows \SPre(\SSm_S)_{\Uc \Nis}: \Uc 
\]
is a Quillen equivalence.
\end{theorem}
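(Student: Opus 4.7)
The plan is to obtain $\sPre(\Sm_S)_{\Sing\text{-}\Nis}$ by transporting the $\Uc$-local model structure $\SPre(\SSm_S)_{\Uc \Nis}$ along the simplicial Quillen equivalence
\[
\Hs \colon \sPre(\Sm_S)_{\A^1} \rightleftarrows \SPre(\SSm_S) \colon \Uc
\]
of Proposition~\ref{A1project}, and then to identify its weak equivalences as the Sing-Nisnevich local ones.

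By Theorem~\ref{local-simp}, the model category $\SPre(\SSm_S)_{\Uc \Nis}$ is a left Bousfield localization of $\SPre(\SSm_S)$ at some set $T$. Taking a cofibrant resolution of $\mathbb{R}\Uc(T)$ to a set $C$ of morphisms in $\sPre(\Sm_S)$ and applying the standard transfer of Bousfield localizations along a Quillen equivalence (see, e.g.,~\cite[Thm.~3.3.20]{Hirschhorn}), I obtain a Quillen equivalence
\[
\Hs \colon (\sPre(\Sm_S)_{\A^1})_C \rightleftarrows \SPre(\SSm_S)_{\Uc \Nis} \colon \Uc,
\]
where the localization on the right-hand side agrees with the original one because the derived counit of the unlocalized Quillen equivalence of Proposition~\ref{A1project} is a weak equivalence on fibrant-cofibrant objects. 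The left-hand side automatically inherits left properness, the simplicial structure, combinatoriality, and the class of projective cofibrations from $\sPre(\Sm_S)_{\A^1}$.

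The main identification step is to show that the weak equivalences of $(\sPre(\Sm_S)_{\A^1})_C$ are precisely the Sing-Nisnevich local weak equivalences. Concretely, a morphism $f$ is a weak equivalence in $(\sPre(\Sm_S)_{\A^1})_C$ if and only if $\Hs(Qf)$ is a weak equivalence in $\SPre(\SSm_S)_{\Uc \Nis}$ (with $Q$ a projective cofibrant replacement), which by definition of the $\Uc$-local model structure means that $\Sing(Qf) = \Uc\Hs(Qf)$ is a Nisnevich local weak equivalence. The key observation is that $\Sing$ preserves objectwise weak equivalences: for any objectwise weak equivalence $h$, the simplicial set $\Sing(h)(U)$ is the diagonal of a bisimplicial set which is a weak equivalence in each row, and so is itself a weak equivalence. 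Applied to the objectwise trivial fibration $Qf \to f$, this shows that $\Sing(Qf)$ is a Nisnevich local weak equivalence if and only if $\Sing(f)$ is, which matches the definition of a Sing-Nisnevich local weak equivalence.

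Finally, the model topos property transfers from $\SPre(\SSm_S)_{\Uc \Nis}$ (which is a model topos by Theorem~\ref{local-topos}) to $\sPre(\Sm_S)_{\Sing\text{-}\Nis}$ along the Quillen equivalence, since being a model topos is an invariant of Quillen equivalences. The most delicate point in the argument is the bookkeeping around the Hirschhorn transfer, namely arranging $C$ so that $\Hs(C)$ induces the same Bousfield localization of $\SPre(\SSm_S)$ as $T$; this reduces to the general fact that a left Bousfield localization at a set of morphisms depends only on the set up to weak equivalence between cofibrant representatives of source and target.
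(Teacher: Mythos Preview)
Your proposal is correct and follows the same overall strategy as the paper: transport the $\Uc$-local model structure along the Quillen equivalence of Proposition~\ref{A1project} and identify the resulting weak equivalences as the Sing-Nisnevich local ones. The only difference is in the technical machinery used to establish the model structure on $\sPre(\Sm_S)$. The paper works directly with the \emph{class} of Sing-Nisnevich local weak equivalences, observes that it is accessible as the preimage of an accessible class under the accessible functor $\Hs$, and invokes Smith's recognition theorem \cite[Prop.~A.2.6.10]{htt}; you instead first realize $\SPre(\SSm_S)_{\Uc\Nis}$ as a localization at a \emph{set} $T$, transfer this set via \cite[Thm.~3.3.20]{Hirschhorn}, and then identify the weak equivalences afterwards. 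Both routes are valid; the paper's is marginally more direct since it bypasses the bookkeeping of matching $\mathbb{L}\Hs(C)$ with $T$, while yours has the advantage of staying within the standard set-localization framework and making the identification of weak equivalences (via the observation that $\Sing$ preserves objectwise weak equivalences) fully explicit.
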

\begin{proof}
This is induced from the Quillen equivalence of Proposition~\ref{A1project} after localizing the right-hand side at the $\Uc$-local weak equivalences and the left-hand side at the inverse image of this class under the (derived) functor $\Hs$, that is, 
the $\Sing$-Nisnevich local weak equivalences. Note that this class contains the weak equivalences of $\sPre(\Sm_S)_{\A^1}$. The class of $\Uc$-local weak equivalences is accessible and accessibly embedded because it is the class of weak equivalences of a combinatorial model category \cite[Cor.~A.2.6.8]{htt}, \cite{RR1}.
Therefore its inverse image under the accessible functor $\Hs$ is also accessible and accessibly embedded.
This shows the existence of the left Bousfield localization $\sPre(\Sm_S)_{\Sing\mathrm{\text{-}}\Nis}$, using \cite[Prop.~A.2.6.10]{htt}. The Quillen equivalence is an immediate consequence of Proposition \ref{A1project}. 
\end{proof} 

\begin{remark}
A fibrant replacement functor for the model topos $\sPre(\Sm_S)_{\Sing\mathrm{\text{-}}\Nis}$ of the previous Theorem~\ref{theorem:singlocalmodel} is \emph{not} given by the functor $L_{\Nis}\Sing(-)$ as the latter is not $\A^1$-invariant in general (see, e.g., \cite[Ex.~3.2.7]{MV99}).
\end{remark}

\subsection{Comparison with $\SPre(\SSm_S)_{\Uc \mathrm{mot}}$} \label{comparison-with}
First, we observe that there is a left Bousfield localization
\[
\id\colon \SPre(\SSm_S)_{\Uc \Nis} \to \SPre(\SSm_S)_{\Uc \mathrm{mot}}.
\]
This Quillen adjunction however is not a Quillen equivalence since the left-hand side is a model topos (see Theorem~\ref{local-topos}) while the right-hand side is not (see Proposition~\ref{prop:motivicisnotatopos}).
Similarly, the comparison between these two homotopy theories, represented by $\SPre(\SSm_S)_{\Uc \Nis}$ and $\SPre(\SSm_S)_{\Uc \mathrm{mot}}$ respectively, can also be studied on the 
`non-enriched side' using the left Bousfield localization 
$$\mathrm{id} \colon \sPre(\Sm_S)_{\Sing\mathrm{\text{-}}\Nis} \to \sPre(\Sm_S)_{\A^1, \Nis}.$$
Note that neither of these two left Quillen functors is homotopy left exact since the motivic homotopy theory is not a model topos.

\begin{example}
We give an example of a motivic weak equivalence which is not a $\Sing$-Nisnevich local weak equivalence. Consider the Nisnevich sheaf of groups $G=\ZZ(\GG_m)$ from Proposition~\ref{theorem:choudhury} and the motivic weak equivalence
\[
   f\colon L_{\Sing\mathrm{\text{-}}\Nis} B G \to  L_\mot  L_{\Sing\mathrm{\text{-}}\Nis} BG                                                                                                                                                                                                       
\]
where $L_{\Sing\mathrm{\text{-}}\Nis}$ is a fibrant replacement functor for the model topos $\sPre(\Sm_S)_{\Sing\mathrm{\text{-}}\Nis}$ of Theorem~\ref{theorem:singlocalmodel}.
Consider the 
canonical commutative triangle
\[
\xymatrix@C=2ex{
& G \ar@{->}[dl]\ar@{->}[dr]&\\
\Omega L_{\Sing\mathrm{\text{-}}\Nis} B G \ar@{->}[rr]^-{\Omega(f)}&& \Omega L_\mot L_{\Sing\mathrm{\text{-}}\Nis} BG.
}
\]
The left diagonal morphism is a $\Sing$-Nisnevich local weak equivalence since the model category $\sPre(\Sm_S)_{\Sing\mathrm{\text{-}}\Nis}$ is a model topos.
Hence it is also a motivic weak equivalence.
We observed in the proof of Proposition~\ref{prop:motivicisnotatopos} that the right diagonal morphism is not a motivic weak equivalence.
Therefore, also $\Omega(f)$ cannot be a motivic equivalence.
This implies that $f$ cannot be a $\Sing$-Nisnevich local weak equivalence.
\end{example}

The comparison between $\SPre(\Sm_S)_{\Sing\mathrm{\text{-}}\Nis}$ and the motivic homotopy theory is essentially about the question of how much of Nisnevich descent is encoded 
in the $\Uc$-local model structure. We discuss the comparison between the $\sPre(\Sm_S)_{\Nis}$ and the model category $\sPre(\Sm_S)_{\Sing\mathrm{\text{-}}\Nis}$ and then 
identify the descent condition in question based on the results of Section \ref{local-model-structures}.

\begin{proposition}\label{singdoesnotpreserve}
Let $S$ be a regular scheme. The functor $\Sing\colon \sPre(\Sm_S) \to \sPre(\Sm_S)$ does not preserve Nisnevich local weak equivalences.
\end{proposition}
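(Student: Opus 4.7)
The plan is to argue by contradiction via the topos property. Suppose $\Sing$ preserves Nisnevich local weak equivalences. I would then show that the class of $\Sing$-Nisnevich local weak equivalences coincides with the class of motivic weak equivalences, which yields a contradiction since the former model structure is a model topos (Theorem~\ref{theorem:singlocalmodel}) while the latter is not (Proposition~\ref{prop:motivicisnotatopos}).

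First I would establish that motivic weak equivalences are $\Sing$-Nisnevich local weak equivalences. The assumption gives immediately that every Nisnevich local weak equivalence is a $\Sing$-Nisnevich local weak equivalence. Since $\sPre(\Sm_S)_{\Sing\text{-}\Nis}$ is a left Bousfield localization of $\sPre(\Sm_S)_{\A^1}$ by Theorem~\ref{theorem:singlocalmodel}, its weak equivalences also contain all $\A^1$-equivalences. Since this class has the $2$-out-of-$3$ property and is closed under the usual localization operations, it contains the smallest such class containing both $\A^1$-equivalences and Nisnevich local weak equivalences, namely the motivic weak equivalences.

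Next I would establish the converse inclusion, which holds unconditionally. If $f$ is a $\Sing$-Nisnevich local weak equivalence, then $\Sing(f)$ is a Nisnevich local weak equivalence and in particular a motivic weak equivalence. Moreover, the unit $X \to \Sing(X)$ is an $\A^1$-equivalence by \cite[Cor.~2.3.8]{MV99}, hence motivic. The $2$-out-of-$3$ property for motivic weak equivalences then forces $f$ itself to be a motivic weak equivalence. Combining both inclusions, the identity functor $\sPre(\Sm_S)_{\Sing\text{-}\Nis} \to \sPre(\Sm_S)_{\A^1, \Nis}$ is then a Quillen equivalence, contradicting the fact that one side is a model topos while the other is not.

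The main subtle point is reconciling the hypothesis \textquotedblleft$S$ regular\textquotedblright{} in the proposition with the standing assumption of Subsection~\ref{motivicnotatopos} that $S$ is the spectrum of a perfect infinite field in Proposition~\ref{prop:motivicisnotatopos}. The regularity of $S$ is already used to guarantee that $\GG_m$ is strongly $\A^1$-invariant via the $\A^1$-invariance of $\mathrm{Pic}$, which enters in Theorem~\ref{theorem:morelstrictly} and the examples surrounding it; if more is needed for the failure of \eqref{HD2}, one reduces to the case of the residue field of a generic point, where the full force of Proposition~\ref{prop:motivicisnotatopos} is available. Once the non-topos property of the motivic model category is secured, the contradiction is immediate from the two inclusions above.
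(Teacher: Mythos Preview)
Your argument is logically sound when $S$ is the spectrum of a perfect infinite field, but it does not prove the proposition in the stated generality. The contradiction you invoke rests on Proposition~\ref{prop:motivicisnotatopos}, which in the paper is established only under the standing assumption of Subsection~\ref{motivicnotatopos} that $S$ is the spectrum of a perfect infinite field; the key inputs there are Morel's Theorem~\ref{theorem:morelstrictly} and Choudhury's Proposition~\ref{theorem:choudhury}, both of which require that hypothesis. Your suggested reduction to a residue field is not a valid move as written: there is no mechanism in the paper (nor an obvious one in general) for transferring the failure of \eqref{HD2} from $\Sm_{\kappa(s)}$ back to $\Sm_S$, and the functoriality discussed in Remark~\ref{remark:naturality} goes in the wrong direction for this purpose. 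Also, the remark that ``regularity is already used to guarantee that $\GG_m$ is strongly $\A^1$-invariant'' refers to an illustrative example preceding Theorem~\ref{theorem:morelstrictly}, not to the actual proof of Proposition~\ref{prop:motivicisnotatopos}.

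By contrast, the paper's proof is elementary and works uniformly for every regular $S$. It exhibits a single explicit Nisnevich (in fact Zariski) local weak equivalence that $\Sing$ fails to preserve: the map $P \to \A^1$ from the pushout of $\A^1\setminus\{1\} \leftarrow \A^1\setminus\{0,1\} \to \A^1\setminus\{0\}$. Regularity enters only to ensure that every affine $\Spec(A)\in\Sm_S$ is reduced, so that $A^\times = A[T_0,\dots,T_n]^\times$ and hence $\Sing$ fixes the three punctured lines; one then compares with the fact that $\Sing(\A^1)$ is objectwise contractible while $\A^1$ is not Nisnevich locally contractible. This direct computation avoids the heavy machinery of Subsection~\ref{motivicnotatopos} entirely and is what allows the weaker hypothesis.
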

\begin{proof}
Since for a scheme $\operatorname{Spec}(A)\in \Sm_S$ the units of the ring $A$ are the same as the units of the ring $A[T_0,\ldots,T_n]$, we have an isomorphism
\[
\hom_{\sPre}(U,\A^1\setminus\{0\})\cong   \hom_{\sPre}(U\times \ADelta^n,\A^1\setminus\{0\})
\]
for every $n\geq 0$.
Therefore $\A^1\setminus\{0\}\cong \Sing(\A^1\setminus\{0\})$ and likewise $\A^1\setminus\{1\}\cong \Sing(\A^1\setminus\{1\})$ and $\A^1\setminus\{0,1\}\cong\Sing(\A^1\setminus\{0,1\})$.
Consider the Zariski distinguished square
\[
\xymatrix{
\A^1\setminus\{0,1\} \ar@{->}[d]_-g\ar@{->}[r]^-f  & \A^1\setminus\{0\}\ar@{->}[d]\\
\A^1\setminus\{1\}\ar@{->}[r] & \A^1
}
\]
and let $P$ be the pushout of $f$ and $g$ in $\sPre(\Sm_S)$. The induced morphism $P\to \A^1$ is a Nisnevich local weak equivalence.
The $\Sing$-functor preserves all limits and colimits, therefore $\Sing(P)$ is the pushout of $\Sing(f)$ and $\Sing(g)$.
Since $\Sing(f)$ is a monomorphism, $\Sing(P)$ is also the homotopy pushout in $\sPre(\Sm_S)_{\Nis}$ and therefore $P\to\Sing(P)$ is a Nisnevich local weak equivalence. 

Suppose that the $\Sing$-functor preserves all Nisnevich local weak equivalences between cofibrant objects. 
Then $\Sing(P)\to\Sing(\A^1)$ is a Nisnevich local weak equivalence and hence $\A^1\to\Sing(\A^1)$ is a Nisnevich local weak equivalence.
This is a contradiction since $\Sing(\A^1)$ is objectwise contractible by \cite[Cor.~1.6]{HS} and therefore also contractible in the Nisnevich local model structure.
But this is not the case for $\A^1$, which is the contradiction.
Therefore the $\Sing$-functor does not preserve Nisnevich local weak equivalences.
\end{proof}

\begin{corollary}\label{notanadjunction}
The adjunction $\Hs: \sPre(\Sm_S)_{\Nis} \rightleftarrows \SPre(\SSm_S)_{\mathcal{U}\Nis}: \mathcal{U}$ is not a Quillen adjunction.
In particular, $\Hs$ does not send Nisnevich squares to homotopy pushouts in general.
\end{corollary}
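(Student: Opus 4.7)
The plan is to deduce both assertions directly from Proposition \ref{singdoesnotpreserve}. First, I would unpack what it would take for $(\Hs,\Uc)$ to be a Quillen adjunction between the Nisnevich-local model structures. Since cofibrations are unchanged under left Bousfield localization and the adjunction is already Quillen between the projective model structures (see Theorem~\ref{proj-simp} and the discussion after \eqref{eq:adjunctionHandU}), it suffices to check whether $\Hs$ sends trivial cofibrations in $\sPre(\Sm_S)_{\Nis}$ to weak equivalences in $\SPre(\SSm_S)_{\mathcal{U}\Nis}$. By Ken Brown's lemma this is equivalent to the condition that $\Hs$ preserve Nisnevich local weak equivalences between projectively cofibrant objects.

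By the very definition of the $\Uc$-local weak equivalences (Theorem~\ref{local-simp}), a morphism $\phi$ in $\SPre(\SSm_S)$ is a weak equivalence in $\SPre(\SSm_S)_{\mathcal{U}\Nis}$ if and only if $\Uc(\phi)$ is a Nisnevich local weak equivalence. Since $\Uc\Hs=\Sing$ by the identification recalled at the beginning of Subsection~\ref{motivicspaces1}, the Quillen condition would translate into the requirement that $\Sing$ preserves Nisnevich local weak equivalences between cofibrant objects. This is precisely what Proposition~\ref{singdoesnotpreserve} rules out, so the adjunction cannot be Quillen.

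For the \emph{In particular} assertion, I would argue by contraposition. Suppose that $\Hs$ sends Nisnevich distinguished squares to homotopy pushouts in $\SPre(\SSm_S)_{\mathcal{U}\Nis}$. Since $\Hs$ is a left adjoint it preserves ordinary pushouts, so $\Hs(P(\alpha))$ is the pushout of $\Hs$ applied to the upper span of a Nisnevich square $\alpha$; by the assumption, the induced morphism $\Hs(P(\alpha))\to \Hs(X)$ is then a weak equivalence in $\SPre(\SSm_S)_{\mathcal{U}\Nis}$. Applying $\Uc$ and using $\Uc\Hs=\Sing$, this would give that $\Sing(P(\alpha))\to \Sing(X)$ is a Nisnevich local weak equivalence for every Nisnevich distinguished square. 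Taking the specific Zariski (hence Nisnevich) distinguished square exhibited in the proof of Proposition~\ref{singdoesnotpreserve}, where $\Sing$ fixes the corners but $\Sing(\A^1)$ becomes Nisnevich-contractible while $\Sing(P)=P$ does not, yields an immediate contradiction.

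There is no real obstacle here beyond bookkeeping: all the substantive work has already been done in Proposition~\ref{singdoesnotpreserve}. The only point that deserves a brief explicit mention is the identity $\Uc\Hs=\Sing$, which is what converts a failure of $\Sing$ to preserve Nisnevich local weak equivalences into the failure of $\Hs$ to define a left Quillen functor on the Nisnevich-localized structures.
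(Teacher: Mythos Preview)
Your argument is correct and follows the same route as the paper: both reduce the first assertion to Proposition~\ref{singdoesnotpreserve} via the identification $\Uc\Hs\cong\Sing$, and both extract the ``in particular'' clause from the explicit Zariski square used there. The paper's proof is a one-liner, but your expanded version (invoking Ken Brown and unwinding the $\Uc$-local weak equivalence condition) matches its content exactly; the only minor point to tighten is that in the contrapositive for the second assertion you tacitly identify the ordinary pushout $\Hs(P(\alpha))$ with the homotopy pushout, which is justified for the specific square at hand (since $\Sing$ preserves colimits and the relevant leg is a monomorphism) but not argued in general.
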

\begin{proof}
The functor $\Sing\simeq \mathcal{U}\Hs$ does not preserve Nisnevich local weak equivalences between cofibrant objects by the proof of Proposition~\ref{singdoesnotpreserve}.
\end{proof}

\begin{remark}
An alternative proof of Corollary~\ref{notanadjunction} is given as follows.
Let $F\in \SPre(\SSm_S)_{\mathcal{U}\Nis}$ be a fibrant simplicial presheaf.
If the functor $\mathcal{U}$ to the Nisnevich local model category were a right Quillen functor, $\Uc (F)$ would be Nisnevich local fibrant.
This implies that $\Uc (F)$ is motivic fibrant since it is also $\A^1$-invariant.
However, $\SPre(\SSm_S)_{\mathcal{U}\Nis}$ is not a model for the motivic homotopy theory (see, e.g., Proposition~\ref{prop:motivicisnotatopos}).
\end{remark}

Another way of comparing $\SPre(\SSm_S)_{\Uc \Nis}$ with the motivic homotopy theory is obtained from the functor $\Uc$ regarded as a \emph{left} Quillen functor 
(see Remark \ref{U-left-Quillen}):
$$\Uc \colon \SPre(\SSm_S)_{\Uc \Nis} \rightleftarrows \sPre^{\mathrm{inj}}(\Sm_S)_{\Nis} \colon \mathcal{G}.$$
Here $\sPre^{\mathrm{inj}}(\Sm_S)_{\Nis}$ denotes the injective local model structure where the cofibrations are the monomorphisms (see \cite{Ja}) and 
$\mathcal{G}$ denotes the right adjoint. More expicitly, given $F \in \sPre^{\mathrm{inj}}(\Sm_s)_{\Nis}$ and $U \in \SSm_S$, the right Kan extension 
$\mathcal{G}$ is defined as an end by the formula
\[
\mathcal{G}(F)(U) = \mathrm{map}_{\sPre(\Sm_S)}(\Uc \big(\mathrm{map}(U, -)\big), F(-)).
\]
Composing this with the Bousfield localization at the class of $\A^1$-equivalences, we obtain a Quillen adjunction 
$$\Uc \colon \SPre(\SSm_S)_{\Uc \Nis} \rightleftarrows \sPre^{\mathrm{inj}}(\Sm_S)_{\A^1, \Nis} \colon \mathcal{G}.$$
As a consequence, we have the following way of constructing $\Uc$-local fibrant objects (cf. Remark \ref{cover-reflecting}).

\begin{proposition}
Let $F \in \sPre^{\mathrm{inj}}(\Sm_S)_{\Nis}$ be a fibrant object. Then $\mathcal{G}(F)$ is fibrant in $\SPre(\SSm_S)_{\Uc \Nis}$.
\end{proposition}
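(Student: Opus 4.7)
The statement is essentially a formal consequence of the preceding discussion identifying $(\Uc, \mathcal{G})$ as a Quillen adjunction. My plan is to prove it by appealing to the standard fact that a right Quillen functor preserves fibrant objects.

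First I would verify in detail that $(\Uc, \mathcal{G})$ is indeed a Quillen adjunction from $\SPre(\SSm_S)_{\Uc \Nis}$ to $\sPre^{\mathrm{inj}}(\Sm_S)_{\Nis}$, as asserted in Remark~\ref{U-left-Quillen}. The adjunction on the underlying categories is witnessed by the end formula given just above the proposition. For this to be a Quillen adjunction, it suffices to check that $\Uc$ preserves cofibrations and trivial cofibrations between cofibrant objects, or equivalently that $\Uc$ preserves cofibrations and weak equivalences (since cofibrations in $\SPre(\SSm_S)_{\Uc \Nis}$ are the projective cofibrations). The cofibrations in $\sPre^{\mathrm{inj}}(\Sm_S)_{\Nis}$ are the monomorphisms, and $\Uc$ sends projective cofibrations of $\SPre(\SSm_S)$ to monomorphisms by inspection on generators. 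That $\Uc$ preserves weak equivalences is simply the definition of the $\Uc$-local weak equivalences (they are the morphisms whose image under $\Uc$ is a local weak equivalence in $\sPre(\Sm_S)$, which is equivalently a weak equivalence in the injective local model structure).

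Having established that $(\Uc, \mathcal{G})$ is a Quillen adjunction, the proposition follows immediately: a right Quillen functor preserves fibrant objects, so for any fibrant $F \in \sPre^{\mathrm{inj}}(\Sm_S)_{\Nis}$, the object $\mathcal{G}(F)$ is fibrant in $\SPre(\SSm_S)_{\Uc \Nis}$.

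Strictly speaking there is no real obstacle here; the only point that deserves care is the verification that $\Uc$ preserves projective cofibrations when the target is equipped with the injective model structure, but this reduces at once to the observation that the generating projective cofibrations $\mathrm{map}_{\SSm_S}(-, U) \times \partial \Delta^n \hookrightarrow \mathrm{map}_{\SSm_S}(-, U) \times \Delta^n$ are sent by $\Uc$ to monomorphisms of simplicial presheaves, which is obvious since $\Uc$ is just a change of enrichment and tensoring with $\partial\Delta^n \hookrightarrow \Delta^n$ is preserved.
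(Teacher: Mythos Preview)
Your proposal is correct and follows exactly the approach the paper takes: the proposition is stated in the paper as an immediate consequence (``As a consequence'') of the Quillen adjunction $(\Uc,\mathcal{G})$ established in the preceding paragraph and in Remark~\ref{U-left-Quillen}, with no further proof given. Your write-up simply spells out the verification that $\Uc$ is left Quillen (preserving monomorphisms and $\Uc$-local weak equivalences by definition) and then invokes the standard fact that right Quillen functors preserve fibrant objects.
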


The comparison between $\SPre(\SSm_S)_{\Uc \mathrm{mot}}$ and $\SPre(\SSm_S)_{\Uc \Nis}$ can be specified further by identifying an explicit set of morphisms which defines this Bousfield localization.
To describe this, it will be convenient to pass to the associated presentable $\infty$-categories and use the $\infty$-categorical notion of a covering sieve as considered by Lurie \cite{htt}. 

Let $\SSm^{\infty}_S$ denote the $\infty$-category associated with the simplicial category $\SSm_S$.
Explicitly, this is given by applying the coherent nerve functor to a fibrant replacement of $\SSm_S$.
Then, the $\infty$-category of presheaves $\mathcal{P}(\SSm^{\infty}_S)$ is equivalent to the presentable $\infty$-category associated with $\SPre(\SSm_S)$ \cite[Prop.~4.2.4.4]{htt}. 
Let $\Hs^{\infty}(\Nis)$ denote the set of morphisms in $\mathcal{P}(\SSm^{\infty}_S)$ that corresponds to $\Hs(\Nis)$.
This is defined by morphisms of presheaves as follows
\begin{equation} \label{infty-Nis}
y_{\SSm^{\infty}_S}(U) \cup_{y_{\SSm^{\infty}_S}(W)} y_{\SSm^{\infty}_S}(Y) \longrightarrow y_{\SSm^{\infty}_S}(X)
\end{equation}
for every Nisnevich distinguished square
\begin{equation*}
\alpha = \left(
\vcenter{
\xymatrix{
W \ar@{->}[d]\ar@{->}[r]  & Y\ar@{->}[d]^p\\
U\ar@{->}[r]^i & X
}}
\right)
\end{equation*}
where $y = y_{\SSm_S^{\infty}}$ denotes the Yoneda embedding.

Following Proposition~\ref{prop:equivalenttothemotivic}, the localization of $\mathcal{P}(\SSm^{\infty}_S)$ at $\Hs^{\infty}(\Nis)$ is equivalent to the presentable $\infty$-category, denoted $\mathcal{P}(\SSm^{\infty}_S)_{\mathrm{mot}}$, associated with the motivic model category $\SPre(\SSm_S)_{\Uc \mathrm{mot}}$.
We may factorize the morphism in \eqref{infty-Nis} into an effective epimorphism followed by a monomorphism (see~\cite[6.2.3]{htt}): 
\[
y(U) \cup_{y(W)} y(Y) \stackrel{Q_{\alpha}}{\twoheadrightarrow} [y(U) \cup_{y(W)} y(Y)] \stackrel{J_{\alpha}}{\rightarrowtail} y(X). 
\]
The collection of monomorphisms $J_{\alpha}$ in $\mathcal{P}(\SSm_S^{\infty})$ that arises this way, for \emph{every} Nisnevich covering sieve $\alpha$, can be identified with the Grothendieck topology on $\SSm^{\infty}_S$ associated with the Grothendieck topology $[\Nis]$ on $\mathrm{Ho}(\SSm^{\infty}_S)$ 
(see \cite[Rem.~6.2.2.3]{htt}).
Indeed, if $\alpha$ is generated by a collection of maps $\{U_{\alpha, i} \to X\}$, then the monomorphism $J_{\alpha}$ corresponds to the ($\infty$-)sieve on $X \in \SSm^{\infty}_S$ that is generated by the same maps (see Remark \ref{difference-of-sheaves}).
Let $[\Nis_{\infty}]$ denote the collection of monomorphisms $J_{\alpha}$ that are obtained this way.
Every $[\Nis_{\infty}]$-local equivalence in $\mathcal{P}(\SSm^{\infty}_S)$ is also an equivalence in the (hypercomplete) $\infty$-topos $\mathcal{P}(\SSm^{\infty}_S)_{[\Nis_{\infty}]}$ 
that is associated with the model topos $\SPre(\SSm_S, [\Nis])$.
As a consequence, the motivic $\infty$-category 
\[
\mathcal{P}(\SSm^{\infty}_S)_{\mathrm{mot}} \simeq \mathcal{P}(\SSm^{\infty}_S)[\Hs^{\infty}(\Nis)^{-1}]
\]
is the localization of the $\infty$-topos $\mathcal{P}(\SSm^{\infty}_S)_{[\Nis_{\infty}]}$ at the set of morphisms 
$$\{Q_{\alpha} \colon \alpha \ \text{Nisnevich covering sieve} \}.$$

\subsection{Summary} \label{summary}
We summarize the connections between the different model categories and Quillen adjunctions in the following diagram. 
{\scriptsize
\[
\xymatrix@C=3.5ex{
&\boxed{(\sPre(\Sm_S)_{\Sing\mathrm{\text{-}}\Nis}){}_{\widetilde{\Nis}}}\ar@{->}[r]^-{\simeq} & \boxed{(\SPre(\SSm_S)_{\Uc\Nis}){}_{\widetilde{\Hs(\Nis)}}}\\
&\sPre(\Sm_S)_{\A^1, \Nis}\ar@{->}[r]^\simeq \ar@{->}[u]^-{\text{lex}}& \SPre(\SSm_S)_{\Uc \mathrm{mot}}\ar@{->}[u]^-{\text{lex}}\\
&\boxed{\sPre(\Sm_S)_{\Sing\mathrm{\text{-}}\Nis}}\ar@{->}[r]^(.35)\simeq \ar@{->}[u]^-{\text{$\neg$lex}} & \boxed{\SPre(\SSm_S)_{\Uc\Nis} = \SPre(\SSm_S, [\Nis])}\ar@{->}[u]^-{\text{$\neg$lex}}\\
\boxed{\sPre(\Sm_S)_{\Nis}}\ar@/^2.0pc/@{->}[uur]^-{\text{$\neg$lex}}\ar@{-->}[ur] & \boxed{\sPre(\Sm_S)_{\A^1}}\ar@{->}[u]^-{\text{lex}}\ar@{->}[r]^\simeq & \boxed{\SPre(\SSm_S)}\ar@{->}[u]^-{\text{lex}}\\
\boxed{\sPre(\Sm_S)}\ar@{->}[rr]^-{\text{$\neg$lex}}\ar@{->}[ur]^-{\text{$\neg$lex}}\ar@{->}[u]_-{\text{lex}} && \boxed{\SPre(\SSm_S)} \ar@{=}[u]
}
\]}
The boxes indicate that the corresponding model categories are model topoi. The label `lex' (respectively,~`$\neg$lex') means that the left Quillen functor is homotopy left exact (respectively,~`not homotopy left exact').
The second row consists of models for the motivic homotopy theory. The top row is obtained by applying Theorem~\ref{thmconstructingmodeltopoi} to $\sPre(\Sm_S)_{\Sing\mathrm{\text{-}}\Nis}$ and $\SPre(\SSm_S)_{\Uc\Nis}$ and the respective classes of motivic weak equivalences.
The dotted arrow is not a Quillen adjoint by Corollary~\ref{notanadjunction}.

We remark that for purely formal reasons every functor which is (homotopically) representable 
in the motivic homotopy theory $\SPre(\SSm_S)_{\Uc \mathrm{mot}}$, it is also representable in the $\Uc$-local homotopy theory $\SPre(\SSm_S)_{\Uc\Nis}$. In addition, if it descends to the homotopy theory $\big(\SPre(\SSm_S)_{\Uc \Nis}\big)_{\widetilde{\Hs(\Nis)}}$, then it will again be representable there.

\end{document}